\DeclareMathOperator*{\argmax}{arg\,max}
\DeclareMathOperator*{\argmin}{arg\,min}
\newcommand\sbullet[1][.5]{\mathbin{\vcenter{\hbox{\scalebox{#1}{$\bullet$}}}}}
\newcommand\numberthis{\addtocounter{equation}{1}\tag{\theequation}}
\newtheorem{definition}{Definition}[section]
\newtheorem{theorem}{Theorem}[section]
\newtheorem{assumption}{Assumption}[section]
\newtheorem{remark}{Remark}[section]
\numberwithin{equation}{section}
\newtheorem{lemma}[theorem]{Lemma}
\newtheorem{proposition}[theorem]{Proposition}
\newtheorem{corollary}[theorem]{Corollary}
\DeclareMathOperator{\Var}{Var}
     \def\R{\mathbb{R}}
\def\calA{{\cal  A}} 
\def\calB{{\cal  B}} 
\def\calC{{\cal  C}}
\def\calO{{\cal  O}}
\def\calS{{\cal  S}}
\newcommand{\kw}[1]{{#1}}
\newcommand{\ljc}[1]{{#1}}
\begin{document}

\title{On the Convergence of Projected Policy Gradient for Any Constant Step Sizes}
\author[1]{Jiacai Liu}
\author[1]{Wenye Li}
\author[2]{Dachao Lin}
\author[1]{Ke Wei}
\author[3]{Zhihua Zhang}
\affil[1]{School of Data Science, Fudan University, Shanghai, China}
\affil[2]{ Huawei Technologies Shanghai R\&D Center, Shanghai China}
\affil[3]{School of Mathematical Sciences, Peking University, Beijing, China}
\date{\today}

\maketitle
\begin{abstract}
Projected policy gradient (PPG) is a basic policy optimization method in reinforcement learning. Given access to exact policy  evaluations, previous studies have established the sublinear convergence of PPG for sufficiently small step sizes based on the smoothness and the gradient domination properties of the value function. However, as the step size  goes to infinity, PPG reduces to the classic policy iteration method, which suggests the convergence of PPG even for large step sizes. In this paper, we fill this gap and show that PPG admits a sublinear convergence {\em for any constant step sizes}. Due to the existence of the state-wise visitation measure in the expression of policy gradient, the existing optimization-based analysis framework for a preconditioned version of PPG (i.e., projected Q-ascent) is not applicable, to the best of our knowledge. Instead, we proceed the proof by computing the state-wise improvement lower bound of PPG based on its inherent structure. In addition, the finite iteration convergence of PPG for any constant step size is further established, which is also new.
\\

\noindent
\textbf{Keywords.} Projected policy gradient, sublinear convergence, finite iteration convergence, policy optimization, policy iteration
\end{abstract}

\section{Introduction}
Reinforcement learning (RL) 
\kw{is essentially about how to make efficient sequential decisions to achieve a long term goal. It has received intensive investigations  both from theoretical
and algorithmic aspects due to its recent success in many areas}, such as games \cite{mnih2015humanlevel,SilverHuangEtAl16nature,Berner2019Dota2W,StarCraft}, robotics \cite{robot1,robot2,robot3} and various other real applications \cite{Agarwal2016MakingCD,chen-2019-topk,chipdesign}. 
Typically, RL can be modeled as a discounted Markov decision process (MDP) represented by a tuple $\mathcal{M} \left(\calS ,\calA ,P,r,\gamma ,\mu \right)$, where $\calS$ is the state space, $\calA$ denotes the action space, $P(s'|s,a)$ is the transition probability or density from state $s$ to state $s'$ under action $a$, $r: \calS \times \calA \times \calS\rightarrow \mathbb{R}$ is the reward function, $\gamma \in [0,1)$ is the discounted factor and $\mu$ is the probability distribution of the initial state $s_0$.  In this paper, we  focus the tabular setting where $\calS$ and $\calA$ are finite, i.e., $|\calS|<\infty$ and $|\calA|<\infty$. 
Let $\Delta(\calA)$ be the probability simplex over the set $\calA$, 
defined as
\begin{align}
\Delta \left( \calA \right) =\left\{ \theta \in \mathbb{R} ^{|\calA |}: \theta _i\ge 0,\sum_{i=1}^{|\calA |}{\theta _i}=1 \right\}.
\end{align}
The set of admissible stationary policies  \kw{(i.e., the direct or simplex parameterization of policies)} is given by
\begin{align}
\Pi :=\left\{ \pi =\left( \pi_s \right) _{s\in \mathcal{S}}\,\,|\,\, \pi_s\in \Delta \left( \mathcal{A} \right) \,\,\text{for all } s\in \mathcal{S} \right\}, 
\label{Policy Space}
\end{align}
where $\pi_s := \pi(\cdot | s) \in \mathbb{R}^{|\calA|}$ and $\pi \in \mathbb{R}^{|\calS| \times |\calA|}$.

Given a policy $\pi \in \Pi$, the state value function at  $s\in\calS$ is defined as 
\begin{align}
V^{\pi}\left( s \right) &:=\mathbb{E} \left\{ \sum_{t=0}^{\infty}{\gamma ^tr\left( s_t,a_t,s_{t+1} \right)}|s_0=s,\pi \right\},
\label{V} 
\end{align}
while the state-action value function at $(s,a)\in\calS\times \calA$ are defined as
\begin{align}
Q^{\pi}\left( s,a \right) &:=\mathbb{E} \left\{ \sum_{t=0}^{\infty}{\gamma ^tr\left( s_t,a_t,s_{t+1}\right)}|s_0=s,a_0=a,\pi \right\}.
\label{Q}
\end{align}
Overall, the goal of RL is to find a policy that maximizes the weighted average of the state values under the initial distribution $\mu$, namely to solve
\begin{align}
\underset{\pi \in \Pi}{\max}\,V^{\pi}\left( \mu \right). \label{objective}
\end{align}
Here $
V^{\pi}\left( \rho \right) :=\mathbb{E} _{s\sim \rho}\left[ V^{\pi}\left( s \right) \right]$ for any $\rho \in \Delta(\calS)$. 

{Policy optimization refers to a family of effective methods in reinforcement learning. In this paper, we focus on}
projected policy gradient (PPG) \kw{ which is likely to be the most direct optimization method for solving} \eqref{objective}. Given an initial policy $\pi_0 \in \Pi$, PPG generates a policy sequence $\{\pi^k\}$ for $k = 1,2,3,...$ as follows:
\begin{align*}
\pi ^{k+1}& = \underset{\pi \in \Pi}{\mathrm{arg}\max}\left\{ \eta _k\left< \nabla _{\pi}V^{\pi}\left( \mu \right) \left| _{\pi =\pi^k} \right. ,\pi -\pi ^k \right> -\frac{1}{2}\left\| \pi -\pi ^k \right\| _{2}^{2} \right\} ,
\\
\,\,     &=\underset{\pi \in \Pi}{\mathrm{arg}\max}\left\{ \sum_{s\in \mathcal{S}}{\left(\eta _k\left< \nabla _{\pi _s}V^{\pi}\left( \mu \right) \left| _{\pi =\pi ^k} \right. ,\pi _s-\pi _{s}^{k} \right> -\frac{1}{2}\left\| \pi _s-\pi _{s}^{k} \right\| _{2}^{2}\right)} \right\},
\end{align*}
or state-wisely,
\begin{align*}
 \pi _{s}^{k+1}=\underset{\pi \in \Pi}{\mathrm{arg}\max}\left\{ \eta _k\left< \nabla _{\pi _s}V^{\pi}\left( \mu \right) \left| _{\pi =\pi ^k} \right. ,\pi _s-\pi _{s}^{k} \right> -\frac{1}{2}\left\| \pi _s-\pi _{s}^{k} \right\| _{2}^{2} \right\}.\numberthis
\label{simplex_pg_proximal_view}
\end{align*}
  According to the policy gradient theorem \cite{pg}, $$
\nabla _{\pi _s}V^{\pi}\left( \mu \right) =\frac{d_{\mu}^{\pi}\left( s \right)}{1-\gamma}Q^{\pi}(s, \cdot)
$$where $d_{\mu}^{\pi}$ is the state visitation probability defined as  
\begin{align*}
d_{\mu}^{\pi}\left( s \right) :=\left( 1-\gamma \right) \mathbb{E} \left\{ \sum_{t=0}^{\infty}{\gamma ^t \mathbbm{1}_{[s_t=s]}}|s_0\sim \mu ,\pi \right\}.\numberthis\label{eq:visitation}
\end{align*}
Thus,  \kw{PPG can be written explicitly in the following form}:
\begin{align}
\mbox{(PPG)} \quad\pi _{s}^{k+1}=\mathrm{Proj}_{\Delta \left( \mathcal{A} \right)}\left( \pi_s^k + \frac{\eta_k d^k_\mu(s)}{1-\gamma}Q^k(s,\cdot) \right), \quad \forall s\in \mathcal{S},
\label{simplex PG}
\end{align}
\kw{where $d^k_\mu$ and $Q^k(s,\cdot)$ are short for $d^{\pi_k}_\mu$ and $Q^{\pi_k}(s,\cdot)$, respectively, and $\text{Proj}_{\Delta(\mathcal{A})}$ denotes the projection onto $\Delta(\mathcal{A})$, i.e., $\mathrm{Proj}_{\Delta \left( \mathcal{A} \right)}\left( v \right) =\underset{p\in \Delta \left( \mathcal{A} \right)}{\mathrm{arg}\min}\,\,\left\| p-v \right\| _{2}^{2}$.}
 \kw{Note that removing the visitation measure $d^k_\mu(s)$  in the PPG update leads to   the projected Q-ascent (PQA) method,}
\begin{align*}
\mbox{(PQA)}\quad \pi _{s}^{k+1}=\mathrm{Proj}_{\Delta \left( \mathcal{A} \right)}\left( \pi _{s}^{k}+\eta _kQ^{k}(s,\cdot) \right),\quad\forall s\in \mathcal{S}.
\numberthis
\label{projected Q-descent}
\end{align*}
PQA is indeed a special case of policy mirror ascent methods (e.g. \cite{Geist_Scherrer_Pietquin_2019,Shani_Efroni_Mannor_2019,Lan_2021,Xiao_2022,Cen_Cheng_Chen_Wei_Chi_2022,Zhan_Cen_Huang_Chen_Lee_Chi_2021,Li_Zhao_Lan_2022,Johnson_Pike-Burke_Rebeschini_2023}) where  the Bregman distance is the squared $\ell_2$-distance and it can also be seen as a preconditioned version of PPG. 

{
\subsection{Motivation and contributions}
The convergence of PPG has been investigated in \cite{Agarwal_Kakade_Lee_Mahajan_2019,  bhabdari2024or, Zhang_Koppel_Bedi_Szepesvari_Wang_2020,Xiao_2022},  given the access to exact policy evaluations. More precisely, it is shown in \cite{Agarwal_Kakade_Lee_Mahajan_2019, bhabdari2024or} that PPG converges to a global optimum at an $O(1/\sqrt{k})$ sublinear rate, which has been improved to $O(1/k)$ subsequently in \cite{Zhang_Koppel_Bedi_Szepesvari_Wang_2020,Xiao_2022}. The analyses in these works  all utilize the smoothness property of the value function, and thus require the step size to be smaller than $1/L$, where $L = \frac{2\gamma \left| \mathcal{A} \right|}{\left( 1-\gamma \right) ^3}$ is the smoothness coefficient of the value function \cite{Agarwal_Kakade_Lee_Mahajan_2019}. However, as $\eta_k$ goes to infinity, it is easy to see from \eqref{simplex_pg_proximal_view} that 
PPG approaches the classic policy iteration (PI) method. Therefore, due to the convergence of policy iteration, it is natural to expect PPG also converges for large step sizes. 

Motivated by the above observation, we extend the convergence studies of PPG {to any constant step sizes} in this paper. The main contributions of this paper are summarized  as follows:
\begin{itemize}
    \item The $O(1/k)$ sublinear convergence of PPG has been established for any constant step sizes, see Theorem~\ref{theorem:sublinear of PPG}. In order to break the step size limitation hidden in the existing optimization analysis framework, we adopt a different route and leverage the more explicit form of the projection onto the probability simplex to derive a state-wise improvement lower bound for PPG. {\em It is worth noting that, due to the existence of the visitation measure in PPG, the analysis for PQA within the framework of policy mirror ascent in \textup{\cite{Xiao_2022, Lan_2021}} is not applicable for PPG, to the best of our knowledge. In fact, the sublinear convergence results of PPG \textup{(}only for sufficiently small step sizes\textup{)} and PQA \textup{(}for any constant step sizes\textup{)} have been established  separately based on different techniques in \textup{\cite{Xiao_2022}}}.
    \item We further show that PPG indeed terminates after a finite number of iterations. The finite iteration convergence of PQA for any constant step size can also be obtained in a similar way. Note that, as a special  case of a general result  in \cite{Li_Zhao_Lan_2022}, the homotopic PQA can be shown to converge in a finite number of iterations. However, this does not imply the finite convergence of PQA for any constant step sizes and the homotopic PQA basically requires an exponentially increasing step size to converge. As a by-product, we present a new dimension-free bound for the finite iteration convergence of PI and VI, which does not explicitly depend on $|\mathcal{S}|$ and $|\mathcal{A}|$.
\end{itemize}
In addition to the main contributions, we also give a brief discussion on the $\gamma$-rate linear convergence  of PPG using non-adaptive geometrically increasing step sizes, as well as the equivalence of PPG and PQA to policy iteration when the step size $\eta_k$ is larger than a threshold that can be calculated from the current policy $\pi^k$.
The existing convergence results and our new results on PPG (as well as on PQA for completeness)  are summarized in Table~\ref{tab:results}.

\begin{table}[ht!]
{\caption{Convergence results for PPG and PQA.}}\label{tab:results}
\renewcommand{\arraystretch}{1.2}
\centering
\begin{tabular}{c|l|l}
\hline
& {\hspace{1cm}Existing results} & \hspace{3cm} {New results}\\
\hline
\multirow{ 4}{*}{PPG} & & $\sbullet[.75]$ Sublinear convergence for any constant $\eta_k$\\
&{$\sbullet[.75]$ Sublinear convergence } & $\sbullet[.75]$ Finite iteration convergence  for any \\
&  {\color{white}$\sbullet[.75]$}  when $\eta_k\leq 1/L$ \cite{Agarwal_Kakade_Lee_Mahajan_2019, bhabdari2024or, Zhang_Koppel_Bedi_Szepesvari_Wang_2020,Xiao_2022}& {\color{white}$\sbullet[.75]$}  constant $\eta_k$\\
&& $\sbullet[.75]$ $\gamma$-rate linear convergence  for  \\
& & {\color{white}$\sbullet[.75]$} geometrically increasing step sizes\\
\hline
\multirow{6}{*}{PQA} &$\sbullet[.75]$ Sublinear convergence   &\\ 
&{\color{white}$\sbullet[.75]$} for any constant $\eta_k$ \cite{Lan_2021,Xiao_2022} &\\
&$\sbullet[.75]$  Finite iteration convergence   & $\sbullet[.75]$ Finite iteration convergence for any\\
&{\color{white}$\sbullet[.75]$}  for homotopic PQA \cite{Li_Zhao_Lan_2022} & {\color{white}$\sbullet[.75]$} constant $\eta_k$\\
& $\sbullet[.75]$ $\gamma$-rate / linear convergence    & \\
& {\color{white}$\sbullet[.75]$} for geometrically increasing& \\
&  {\color{white}$\sbullet[.75]$} step sizes \cite{Xiao_2022, Johnson_Pike-Burke_Rebeschini_2023} & \\
\hline
\end{tabular}
\end{table}
}

\subsection{Notation and assumptions}
Recalling the definitions of the state value function (\ref{V}) and the state-action value function (\ref{Q}), the advantage  function of a policy $\pi$ is defined as
\begin{align*}
A^{\pi}\left( s,a \right) :=Q^{\pi}\left( s,a \right) -V^{\pi}\left( s \right).
\end{align*}
\kw{It is evident that} $A^\pi(s,a)$ measures how well a single action is  compared with the average state value. \kw{Moreover, we  use $V^*(s)$, $Q^*(s,a)$ and $A^*(s,a)$ to denote the corresponding value functions associated with the optimal policy $\pi^*$, and use $V^k(s)$, $Q^k(s,a)$ and $A^k(s,a)$ to denoted the corresponding value functions associated with the policy output by the algorithm in the $k$-th iteration. In the sequel we often use the shorthand notation for ease of exposition, for example,
\begin{align*}
\pi_{s,a}:=\pi(a|s),\quad \pi_s:=\pi(\cdot|s),\quad Q^\pi_{s,a}:=Q^\pi(s,a),\quad\mbox{and}\quad Q^\pi_s:=Q^\pi(s,\cdot).
\end{align*}}

Given a state $s \in \calS$,    the set of optimal actions $\calA_s^\ast$ at state $s$ is defined as,
\begin{align*}
    \calA_s^\ast = \arg\max_{a\in\calA} Q^\ast(s,a) = \arg\max_{a\in\calA} A^\ast(s,a).
\end{align*}
 Given a policy $\pi \in \Pi$, a state $s \in \calS$ and a set $B \subset \calA$,
define
$$
\pi _s\left( B \right) =\sum_{a\in B}{\pi_s \left( a\right)}
$$
as the probability of $\pi_s$ on $B$ and \kw{denote by
$b_s^\pi$  the probability on non-optimal actions},
$$
b_{s}^{\pi}=\pi _s\left( \mathcal{A} \setminus \mathcal{A} _{s}^{*} \right).
$$
\kw{When $b_s^\pi$ is small for any $s\in\mathcal{S}$, it is natural to expect that $\pi$ will be close to be optimal. Thus, $b_s^\pi$ is a very essential optimality measure of a policy.} 
The set of $\pi$-optimal actions at state $s$, \kw{denoted $\mathcal{A} _{s}^{\pi}$}, is defined as $$
\mathcal{A} _{s}^{\pi}=\underset{a\in \mathcal{A}}{\mathrm{arg}\max}\,\,A^{\pi}\left( s,a \right),
$$
\kw{with $\mathcal{A} _{s}^{k}$ being the abbreviation of  $\mathcal{A} _{s}^{\pi^k}$}.
 \kw{The following quantity is quite central in the finite iteration convergence analysis}, which has also appeared in previous works, see for example \cite{Mei_Xiao_Szepesvari_Schuurmans_2020,Khodadadian_Jhunjhunwala_Varma_Maguluri_2021}.

\begin{definition} The optimal advantage function gap $\Delta$ is defined as follows:
\begin{align}
\Delta :=\underset{s\in \tilde{S},a \notin \mathcal{A} _{s}^{*}}{\min}\left| A^{*}(s,a) \right|,
\label{optimal gap}
\end{align}
where $\widetilde{S}=\{s \in \calS:\calA^*_s \ne \calA\}$  denotes the set of states that have non-optimal actions. 
\end{definition}
Without
loss of generality, we assume $\tilde{\calS} \neq \emptyset$. It is trivial that $\Delta>0$ since $A^*(s,a)<0$ holds for all non-optimal actions. Additionally, we will make the following two standard assumptions about the reward and the initial state distribution.


\begin{assumption}[Bounded reward] 
    $r(s,a,s') \in [0,1],~\forall s, s' \in \calS,~ a \in \calA.$
    \label{bounded reward}
\end{assumption}

\begin{assumption}[Traversal initial distribution] $\tilde{\mu}:=\underset{s\in \calS}{\min}\,\mu \left( s \right) >0.$
    \label{traversal initial distribution}
\end{assumption}

Recall that $d_\mu^\pi$ defined \eqref{eq:visitation} is the state visitation measure following policy $\pi$. We use $d^*_\mu$ to the state visitation measure following the optimal policy $\pi^*$ and use $d^k_\mu$ to denote the visitation measure following the policy output by the algorithm in the $k$-th iteration. For $\pi \in \Pi$, $\mu \in \Delta(\calS)$ and $s \in \calS$, \kw{it follows immediately from Assumption~\ref{traversal initial distribution} that} 
    \begin{align*}
        d^\pi_\mu(s) \geq (1-\gamma)\tilde{\mu}.\numberthis \label{bound of d}
    \end{align*}

\subsection{Organization of the paper}
\kw{The rest of the paper is outlined as follows.}
In Section~\ref{Preliminary}, some preliminary results are provided which be used in our later analysis. The sublinear convergence of PPG with any constant step size is discussed in Section~\ref{Sublinear Convergence of PPG}, \kw{followed by} the finite convergence in Section ~\ref{Finite time convergence results}. The dimension-free bound for the finite iteration convergence of PI and VI is also presented in Section~\ref{Finite time convergence results}.
 In Section \ref{Linear convergence and the equivalence to PI}, we present the results of linear convergence and equivalence to PI under different step size selection rules.

\section{Preliminaries}
\label{Preliminary}


\subsection{Useful lemmas }
As we assume the reward function $r$ is bounded in Assumption~\ref{bounded reward}, all the value functions are bounded as they are discounted summations of rewards.

\begin{lemma}
    For any policy $\pi \in \Pi$ and $(s,a) \in \calS \times \calA$,
    \begin{align*}
        V^\pi(s) \in \left[0, \frac{1}{1-\gamma}\right], \;\; Q^\pi(s,a) \in \left[0, \frac{1}{1-\gamma} \right], \;\; A^\pi(s,a) \in \left[-\frac{1}{1-\gamma}, \frac{1}{1-\gamma} \right].
    \end{align*}
    \label{bounds of V Q A}
\end{lemma}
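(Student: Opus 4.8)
The plan is to bound each quantity directly from its definition as a discounted sum of rewards, invoking only the boundedness of $r$ from Assumption~\ref{bounded reward}; no smoothness or structural properties of $\pi$ are needed.

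First I would treat the state value function. Starting from the definition \eqref{V} and using $r(s_t,a_t,s_{t+1})\in[0,1]$ almost surely, the lower bound $V^\pi(s)\ge 0$ is immediate, since every summand $\gamma^t r(s_t,a_t,s_{t+1})$ is nonnegative and expectation preserves nonnegativity. For the upper bound I would replace each reward by its maximum value $1$ and sum the resulting geometric series,
\begin{align*}
V^{\pi}(s)=\mathbb{E}\left\{ \sum_{t=0}^{\infty}\gamma^t r(s_t,a_t,s_{t+1})\,\Big|\,s_0=s,\pi \right\}\le \sum_{t=0}^{\infty}\gamma^t=\frac{1}{1-\gamma},
\end{align*}
which gives $V^\pi(s)\in\left[0,\tfrac{1}{1-\gamma}\right]$. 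The interchange of expectation and the infinite summation is justified by the monotone convergence theorem, as all summands are nonnegative.

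Next, the identical argument applied to the definition \eqref{Q} of $Q^\pi(s,a)$ — which is the same discounted reward sum, merely conditioned additionally on $a_0=a$ — yields $Q^\pi(s,a)\in\left[0,\tfrac{1}{1-\gamma}\right]$ verbatim. Finally, the advantage bound follows purely algebraically from the identity $A^\pi(s,a)=Q^\pi(s,a)-V^\pi(s)$: the difference of two quantities, each lying in $\left[0,\tfrac{1}{1-\gamma}\right]$, necessarily lies in $\left[-\tfrac{1}{1-\gamma},\tfrac{1}{1-\gamma}\right]$.

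There is no genuine obstacle in this lemma; it is a routine consequence of the geometric bound $\sum_{t\ge 0}\gamma^t=\tfrac{1}{1-\gamma}$ together with $\gamma\in[0,1)$. The only point deserving a word of care is the exchange of expectation and the infinite sum, which is harmless precisely because the summands are nonnegative and uniformly bounded.
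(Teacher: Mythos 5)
Your proof is correct and takes essentially the same route as the paper, which dispenses with this lemma in a single remark: the value functions are bounded because they are discounted sums of rewards lying in $[0,1]$, giving the geometric bound $\sum_{t\ge 0}\gamma^t = \frac{1}{1-\gamma}$, with the advantage bound following from $A^\pi = Q^\pi - V^\pi$. Your write-up merely makes explicit (correctly) the monotone-convergence justification and the algebra that the paper leaves implicit.
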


By leveraging the structure property of the MDP, we further have the lemma below.

\begin{lemma} 
\label{lemma: advantage_Lemma}
For any policy $\pi$ ,
\begin{itemize}
    \item  $
\left\| Q^*-Q^{\pi} \right\| _{\infty}\le \gamma \left\| V^*-V^{\pi} \right\| _{\infty}
$
    \item $\left\| A^{*}-A^{\pi} \right\| _{\infty}\le \left\| V^*-V^{\pi} \right\| _{\infty}$
    \item $
\left\| V^*-V^{\pi} \right\| _{\infty}\le \frac{V^*\left( \rho \right) -V^{\pi}\left( \rho \right)}{\tilde{\rho}}$ for any $\rho \in \Delta(\calS)$ such that $
\tilde{\rho}:=\underset{s \in \calS}{\min}\,\rho \left( s \right) > 0$.
\end{itemize}

\end{lemma}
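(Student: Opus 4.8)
The plan is to reduce all three inequalities to two elementary facts: the Bellman consistency equations $Q^\pi(s,a) = \sum_{s'} P(s'|s,a)\left[r(s,a,s') + \gamma V^\pi(s')\right]$ (and likewise for $Q^*$), together with the standard pointwise domination $V^* \ge V^\pi$, which holds because $\pi^*$ is globally optimal and attains the maximal value at every state simultaneously. Each bullet then follows by a short averaging/sign argument, and I would treat the three claims in order since the second and third reuse the identity from the first.

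First I would prove the first bullet. Subtracting the two Bellman equations, the reward terms cancel and leave $Q^*(s,a) - Q^\pi(s,a) = \gamma \sum_{s'} P(s'|s,a)\left[V^*(s') - V^\pi(s')\right]$. Since $P(\cdot|s,a)$ is a probability distribution over $\calS$, applying the triangle inequality and $\sum_{s'} P(s'|s,a) = 1$ bounds the right-hand side in absolute value by $\gamma \left\|V^* - V^\pi\right\|_\infty$; taking the maximum over $(s,a)$ gives the claim.

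For the second bullet, I expect the only real subtlety to lie in tracking signs, since the crude triangle inequality on $A^* - A^\pi = (Q^* - Q^\pi) - (V^* - V^\pi)$ would waste a factor and yield $(1+\gamma)$ rather than $1$. The key step is to use $V^* \ge V^\pi$ pointwise, which through the identity of the first bullet forces $Q^* \ge Q^\pi$ as well. Hence both $\delta_Q(s,a) := Q^*(s,a) - Q^\pi(s,a)$ and $\delta_V(s) := V^*(s) - V^\pi(s)$ are nonnegative and each lies in $\left[0, \left\|V^* - V^\pi\right\|_\infty\right]$. Because $A^*(s,a) - A^\pi(s,a) = \delta_Q(s,a) - \delta_V(s)$ is a difference of two quantities in this common interval, its absolute value is at most $\left\|V^* - V^\pi\right\|_\infty$, and taking the maximum completes the bound. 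This sign-tracking observation is the main (though mild) obstacle, as it is precisely where the argument must improve on a naive triangle inequality.

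Finally, for the third bullet I would again invoke $V^* \ge V^\pi$ so that every summand in $V^*(\rho) - V^\pi(\rho) = \sum_s \rho(s)\left[V^*(s) - V^\pi(s)\right]$ is nonnegative. Letting $\bar{s}$ be a state attaining $\left\|V^* - V^\pi\right\|_\infty = V^*(\bar{s}) - V^\pi(\bar{s})$, I would discard all other (nonnegative) terms to obtain $V^*(\rho) - V^\pi(\rho) \ge \rho(\bar{s})\left[V^*(\bar{s}) - V^\pi(\bar{s})\right] \ge \tilde{\rho}\left\|V^* - V^\pi\right\|_\infty$, where the last step uses $\rho(\bar{s}) \ge \tilde{\rho}$. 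Dividing by $\tilde{\rho} > 0$ yields the stated inequality.
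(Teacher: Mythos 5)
Your proposal is correct and follows essentially the same route as the paper's proof: subtracting the Bellman equations for the first bullet, exploiting the pointwise signs $V^*\ge V^\pi$ and $Q^*\ge Q^\pi$ to avoid the lossy $(1+\gamma)$ triangle-inequality bound in the second, and using nonnegativity of $V^*-V^\pi$ together with $\rho(s)\ge\tilde\rho$ in the third. The only differences are cosmetic (you phrase the second bullet as a difference of two quantities in a common interval, and the third by isolating the maximizing state rather than summing over all states).
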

\begin{proof}
Recalling the definition of state-action value function \eqref{Q}, one has
$$
\left| Q^{\pi}\left( s,a \right) -Q^*\left( s,a \right) \right|=\gamma \,\Big| \mathbb{E} _{s^{\prime}\sim P\left( \cdot |s,a \right)}\left[ V^{\pi}\left( s^{\prime} \right) -V^*\left( s^{\prime} \right) \right] \Big|\le \gamma \left\| V^{\pi}-V^* \right\| _{\infty}.
$$
For the advantage function, one has
$$
A^{\pi}\left( s,a \right) -A^*\left( s,a \right) =\left( V^*\left( s \right) -V^{\pi}\left( s \right) \right) -\left( Q^*\left( s,a \right) -Q^{\pi}\left( s,a \right) \right).
$$
On the one hand,
$$
A^{\pi}\left( s,a \right) -A^*\left( s,a \right) \le V^*\left( s \right) -V^{\pi}\left( s \right) \le \left\| V^*-V^{\pi} \right\| _{\infty}.
$$
On the other hand,
$$
A^*\left( s,a \right) -A^{\pi}\left( s,a \right) \le Q^*\left( s,a \right) -Q^{\pi}\left( s,a \right) \le \gamma \left\| V^*-V^{\pi} \right\| _{\infty}.
$$
Thus $\left\| A^{\pi}-A^* \right\|_\infty \le \left\| V^*-V^{\pi} \right\| _{\infty}$. For the bound on $\left\| V^*-V^{\pi} \right\| _{\infty}$, a direct computation yields 
$$
\left\| V^*-V^{\pi} \right\| _{\infty} \le \sum_s{\frac{\rho \left( s \right)}{\rho \left( s \right)}\left( V^*\left( s \right) -V^\pi\left( s \right) \right)}\le \frac{V^*\left( \rho \right) -V^\pi\left( \rho \right)}{\tilde{\rho}},
$$
\kw{which concludes the proof.}
\end{proof}

The performance difference lemma below is a fundamental lemma in the analysis of RL algorithms \ljc{(e.g. \cite{Agarwal_Kakade_Lee_Mahajan_2019,Mei_Xiao_Szepesvari_Schuurmans_2020,Khodadadian_Jhunjhunwala_Varma_Maguluri_2021,hadamard_pg,Xiao_2022})}. It characterizes the difference between the value functions of two arbitrary policies can be represented as the weighted average of the advantages.
\begin{lemma}[Performance Difference Lemma \cite{kakade2002approximately}]
\label{performanceDifferenceLemma}
	For any two policies $\pi_1, \pi_2$, and any $\rho \in \Delta(\calS)$, one has
	\begin{align*}
		V^{\pi_1}(\rho) - V^{\pi_2}(\rho) =\frac{1}{1-\gamma} \mathbb{E}_{s\sim d_{\rho}^{\pi_1}}\left[ \mathbb{E}_{a \sim \pi_1(\cdot|s)}\left[A^{\pi_2}(s,a)\right] \right].
	\end{align*}
\end{lemma}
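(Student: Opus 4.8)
The plan is to prove the identity by the classical trajectory-telescoping argument of Kakade and Langford. First I would fix the initial state $s_0 \sim \rho$ and unroll $V^{\pi_1}(\rho)$ as the expected discounted reward along a trajectory $(s_0,a_0,s_1,a_1,\dots)$ generated by following $\pi_1$. Then I would subtract $V^{\pi_2}(\rho) = \mathbb{E}_{s_0\sim\rho}[V^{\pi_2}(s_0)]$ and rewrite the subtracted term by the telescoping identity
\[
-V^{\pi_2}(s_0) = \sum_{t=0}^\infty \gamma^t\big(\gamma V^{\pi_2}(s_{t+1}) - V^{\pi_2}(s_t)\big),
\]
which holds along every trajectory because the partial sums collapse to $\gamma^{T+1}V^{\pi_2}(s_{T+1}) - V^{\pi_2}(s_0)$, and the boundary term vanishes as $T\to\infty$. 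This last point is exactly where Lemma~\ref{bounds of V Q A} enters: the boundedness of $V^{\pi_2}$ together with $\gamma<1$ forces $\gamma^{T+1}V^{\pi_2}(s_{T+1})\to 0$.

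Combining the two expansions under a single expectation over the $\pi_1$-trajectory gives
\[
V^{\pi_1}(\rho) - V^{\pi_2}(\rho) = \mathbb{E}_{\pi_1}\!\left[\sum_{t=0}^\infty \gamma^t\big(r(s_t,a_t,s_{t+1}) + \gamma V^{\pi_2}(s_{t+1}) - V^{\pi_2}(s_t)\big)\right].
\]
Next I would take the conditional expectation over $s_{t+1}\sim P(\cdot|s_t,a_t)$ inside each summand. By the definition \eqref{Q} of the state-action value function, $\mathbb{E}_{s_{t+1}}[r(s_t,a_t,s_{t+1}) + \gamma V^{\pi_2}(s_{t+1})] = Q^{\pi_2}(s_t,a_t)$, so the bracketed term reduces to $Q^{\pi_2}(s_t,a_t) - V^{\pi_2}(s_t) = A^{\pi_2}(s_t,a_t)$. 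This yields the compact intermediate form $V^{\pi_1}(\rho) - V^{\pi_2}(\rho) = \mathbb{E}_{\pi_1}[\sum_{t\ge0}\gamma^t A^{\pi_2}(s_t,a_t)]$.

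It remains to repackage the time-indexed expectation into the state visitation measure. Interchanging the (absolutely convergent) sum with the expectation and grouping the terms by the current state, the coefficient multiplying $\mathbb{E}_{a\sim\pi_1(\cdot|s)}[A^{\pi_2}(s,a)]$ at state $s$ is precisely $\sum_{t=0}^\infty \gamma^t\,\mathbb{P}(s_t=s\mid s_0\sim\rho,\pi_1) = \tfrac{1}{1-\gamma}d_\rho^{\pi_1}(s)$ by the definition \eqref{eq:visitation}. Rewriting the resulting state sum as an expectation over $d_\rho^{\pi_1}$ produces the claimed $\tfrac{1}{1-\gamma}\,\mathbb{E}_{s\sim d_\rho^{\pi_1}}\mathbb{E}_{a\sim\pi_1(\cdot|s)}[A^{\pi_2}(s,a)]$, completing the argument. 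The only genuinely delicate points are the telescoping identity and the two interchanges of infinite summation with expectation; both are justified uniformly by the boundedness of all reward and value quantities (Assumption~\ref{bounded reward} and Lemma~\ref{bounds of V Q A}) together with $\gamma<1$, so I expect the telescoping rewrite to be the conceptual crux while the remaining manipulations are bookkeeping.
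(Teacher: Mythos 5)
Your proof is correct: the paper states this lemma without proof, deferring to the citation \cite{kakade2002approximately}, and your telescoping argument is precisely the classical Kakade--Langford proof from that reference, with the boundary term $\gamma^{T+1}V^{\pi_2}(s_{T+1})\to 0$, the one-step Bellman identity $\mathbb{E}_{s'\sim P(\cdot|s,a)}[r(s,a,s')+\gamma V^{\pi_2}(s')]=Q^{\pi_2}(s,a)$, and the regrouping of $\sum_t \gamma^t \mathbb{P}(s_t=s)$ into $\tfrac{1}{1-\gamma}d_\rho^{\pi_1}(s)$ all applied correctly. The interchanges of infinite sums and expectations are indeed justified by Assumption~\ref{bounded reward}, Lemma~\ref{bounds of V Q A}, and $\gamma<1$, so the argument is complete as written.
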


Recall that $b^\pi_s$ denotes the probability on non-optimal actions which can be viewed as an essential measure for the optimality of a policy. 
The following two lemmas establish the relation between $b_s^\pi$ and the mismatch $V^{\ast}(\rho )-V^{\pi}(\rho )$.

\begin{lemma} [\protect{\cite[Theorem~3.1]{Khodadadian_Jhunjhunwala_Varma_Maguluri_2021}}]
For any policy $\pi \in \Pi$ and $\rho \in \Delta(\calS)$,
$$
V^{\ast}(\rho )-V^{\pi}(\rho )\le \frac{1}{\left( 1-\gamma \right) ^2}\cdot \mathbb{E} _{s\sim d_{\rho}^{\pi}}\left[ b_{s}^{\pi} \right] .
$$
\label{lemma: V error le b}
\end{lemma}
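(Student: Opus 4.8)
The plan is to invoke the Performance Difference Lemma (Lemma~\ref{performanceDifferenceLemma}) with the two policies oriented so that the visitation measure $d_\rho^\pi$ is the one that appears. Concretely, I would apply it with $\pi_1=\pi$ and $\pi_2=\pi^*$, which gives
\begin{align*}
V^\pi(\rho)-V^*(\rho)=\frac{1}{1-\gamma}\mathbb{E}_{s\sim d_\rho^\pi}\left[\mathbb{E}_{a\sim\pi(\cdot|s)}\left[A^*(s,a)\right]\right].
\end{align*}
Choosing this orientation (rather than $\pi_1=\pi^*$) is the decisive step: it produces exactly the measure $d_\rho^\pi$ required by the statement, whereas the opposite choice would introduce $d_\rho^{\pi^*}$. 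Negating both sides yields
\begin{align*}
V^*(\rho)-V^\pi(\rho)=\frac{1}{1-\gamma}\mathbb{E}_{s\sim d_\rho^\pi}\left[\mathbb{E}_{a\sim\pi(\cdot|s)}\left[-A^*(s,a)\right]\right].
\end{align*}

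Next I would bound the inner expectation state-wise. The key observation is that the optimal advantage function vanishes on optimal actions: for $a\in\calA_s^*$ one has $Q^*(s,a)=\max_{a'}Q^*(s,a')=V^*(s)$, hence $A^*(s,a)=0$, while for $a\notin\calA_s^*$ one has $A^*(s,a)<0$. Therefore the sum defining the inner expectation collapses onto the non-optimal actions only,
\begin{align*}
\mathbb{E}_{a\sim\pi(\cdot|s)}\left[-A^*(s,a)\right]=\sum_{a\notin\calA_s^*}\pi(a|s)\left(-A^*(s,a)\right).
\end{align*}
Invoking the uniform bound $-A^*(s,a)\le\frac{1}{1-\gamma}$ from Lemma~\ref{bounds of V Q A} together with the definition $b_s^\pi=\pi_s(\calA\setminus\calA_s^*)$, each such inner expectation is at most $\frac{b_s^\pi}{1-\gamma}$.

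Substituting this state-wise estimate back and taking the expectation over $s\sim d_\rho^\pi$ then yields the claimed bound $V^*(\rho)-V^\pi(\rho)\le\frac{1}{(1-\gamma)^2}\mathbb{E}_{s\sim d_\rho^\pi}[b_s^\pi]$. I do not anticipate a genuine obstacle here; the proof is short once the two ideas are in place. The only points requiring care are the orientation of the Performance Difference Lemma and the recognition that, because $A^*$ is zero on optimal actions, the crude bound $|A^*|\le\frac{1}{1-\gamma}$ need only be spent on the probability mass $b_s^\pi$ placed on suboptimal actions, which is precisely what produces the factor $b_s^\pi$ in the result.
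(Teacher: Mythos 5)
Your proof is correct. Note that the paper itself does not prove this lemma at all --- it is quoted directly from \cite[Theorem~3.1]{Khodadadian_Jhunjhunwala_Varma_Maguluri_2021} --- so there is no internal proof to compare against; your argument supplies a valid self-contained derivation. Each step checks out: the orientation $\pi_1=\pi$, $\pi_2=\pi^*$ in Lemma~\ref{performanceDifferenceLemma} is indeed the right one to produce $d_\rho^\pi$; the collapse of the inner expectation onto $\calA\setminus\calA_s^*$ is justified because $A^*(s,a)=Q^*(s,a)-V^*(s)=0$ exactly for $a\in\calA_s^*$ (Bellman optimality) and $A^*(s,a)<0$ otherwise; and the uniform bound $-A^*(s,a)\le\frac{1}{1-\gamma}$ from Lemma~\ref{bounds of V Q A}, applied only to the mass $b_s^\pi$, delivers the factor $\frac{1}{(1-\gamma)^2}$. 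It is worth observing that your argument is precisely the mirror image of the paper's own proof of the adjacent Lemma~\ref{lemma: b le V error}: both start from the identity
\begin{align*}
V^*(\rho)-V^\pi(\rho)=\frac{1}{1-\gamma}\sum_{s}d_\rho^\pi(s)\sum_{a\notin\calA_s^*}\pi(a|s)\left|A^*(s,a)\right|,
\end{align*}
after which the paper bounds $\left|A^*(s,a)\right|$ \emph{below} by $\Delta$ to get a lower bound, while you bound it \emph{above} by $\frac{1}{1-\gamma}$ to get the upper bound claimed here. So the two lemmas are really two halves of one computation, and your write-up makes that symmetry explicit.
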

\begin{lemma}
For any policy  $\pi \in \Pi$  and $\rho \in \Delta(\calS)$,
$$
\mathbb{E} _{s\sim \rho}\left[ b_{s}^{\pi} \right] \le \frac{V^*\left( \rho \right) -V^{\pi}\left( \rho \right)}{\Delta}.
$$
\label{lemma: b le V error}
\end{lemma}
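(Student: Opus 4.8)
The plan is to pass from the initial distribution $\rho$ to the discounted visitation measure $d_\rho^\pi$ via the performance difference lemma, and then convert the resulting advantage expression into $b_s^\pi$ using the gap $\Delta$. First I would apply Lemma~\ref{performanceDifferenceLemma} with $\pi_1=\pi$ and $\pi_2=\pi^*$, which gives
$$
V^*(\rho)-V^{\pi}(\rho)=\frac{1}{1-\gamma}\,\mathbb{E}_{s\sim d_\rho^\pi}\Big[\mathbb{E}_{a\sim\pi(\cdot|s)}\big[-A^*(s,a)\big]\Big].
$$

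Next I would lower bound the inner expectation state by state. Since $\calA_s^*=\arg\max_a A^*(s,a)$ and $\max_a A^*(s,a)=0$, every optimal action contributes zero to $\mathbb{E}_{a\sim\pi(\cdot|s)}[-A^*(s,a)]$, while for $s\in\tilde S$ every non-optimal action satisfies $-A^*(s,a)\ge\Delta$ by the definition \eqref{optimal gap} of $\Delta$. For $s\notin\tilde S$ there are no non-optimal actions, so $b_s^\pi=0$ and the bound below holds trivially. Hence, for every $s$,
$$
\mathbb{E}_{a\sim\pi(\cdot|s)}\big[-A^*(s,a)\big]=\sum_{a\notin\calA_s^*}\pi_s(a)\big(-A^*(s,a)\big)\ge\Delta\sum_{a\notin\calA_s^*}\pi_s(a)=\Delta\,b_s^\pi.
$$

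Finally I would reconcile the measure mismatch between $d_\rho^\pi$ and $\rho$. Retaining only the $t=0$ term in the definition \eqref{eq:visitation} of the visitation measure yields $d_\rho^\pi(s)\ge(1-\gamma)\rho(s)$, and since $b_s^\pi\ge0$ this gives $\mathbb{E}_{s\sim d_\rho^\pi}[\Delta\,b_s^\pi]\ge(1-\gamma)\Delta\,\mathbb{E}_{s\sim\rho}[b_s^\pi]$. Chaining the three displays, the prefactor $1/(1-\gamma)$ cancels against $(1-\gamma)$ to produce $V^*(\rho)-V^{\pi}(\rho)\ge\Delta\,\mathbb{E}_{s\sim\rho}[b_s^\pi]$, and dividing by $\Delta>0$ gives the claim. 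The only mildly delicate point is this measure mismatch: the performance difference lemma naturally produces an expectation against $d_\rho^\pi$ rather than $\rho$, and it is precisely the elementary bound $d_\rho^\pi(s)\ge(1-\gamma)\rho(s)$, paired with the matching $1/(1-\gamma)$ factor, that lets the final constant collapse cleanly to $\Delta$.
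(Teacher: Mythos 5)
Your proof is correct and follows essentially the same route as the paper's: apply the performance difference lemma with $\pi_1=\pi$, $\pi_2=\pi^*$, use that $A^*(s,a)=0$ on optimal actions and $|A^*(s,a)|\ge\Delta$ on non-optimal ones, and finish with the pointwise bound $d_\rho^\pi(s)\ge(1-\gamma)\rho(s)$ to cancel the $1/(1-\gamma)$ factor. The only difference is cosmetic: you make explicit the visitation-measure bound that the paper leaves implicit in its final inequality.
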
 
\begin{proof}
According to the performance difference lemma,
\begin{align*}
&\phantom{==}V^{\ast}(\rho )-V^{\pi}(\rho )=-\left( V^{\pi}\left( \rho \right) -V^*\left( \rho \right) \right) 
\\
\,\,&=\frac{1}{1-\gamma}\sum_s{d_{\rho}^{\pi}\left( s \right) \sum_a{\pi \left( a|s \right) \cdot \left( -A^*\left( s,a \right) \right)}}
\\
&=\frac{1}{1-\gamma}\sum_{s\in \tilde{\mathcal{S}}}{d_{\rho}^{\pi}\left( s \right) \sum_{a\notin \mathcal{A} _{s}^{*}}{\pi \left( a|s \right) \cdot \left| A^*\left( s,a \right) \right|}}
\\
\,\,&\ge \frac{1}{1-\gamma}\sum_{s\in \tilde{\mathcal{S}}}{d_{\rho}^{\pi}\left( s \right) \sum_{a\notin \mathcal{A} _{s}^{*}}{\pi \left( a|s \right) \cdot \Delta}}
\\
&=\frac{\Delta}{1-\gamma}\sum_{s\in \tilde{\mathcal{S}}}{d_{\rho}^{\pi}\left( s \right) b_{s}^{\pi}}
\ge \Delta\cdot \mathbb{E} _{s\sim \rho}\left[ b_{s}^{\pi} \right].
\end{align*}
The proof is complete \kw{after rearrangement}.
\end{proof}



\subsection{Basic facts about projection onto probability simplex}
Recall that Euclidian projection onto the probability simplex is defined as
\begin{align*}
    \mathrm{Proj}_{\Delta(\calA)}(p) = \underset{y\in\Delta(\calA)}{\arg\min} \| y-p \|^2.
\end{align*}
This projection has an explicit expression, presented in the following lemma.

\begin{lemma} For arbitrary vector $
p=\left( p_a \right) _{a\in \mathcal{A}}\in \mathbb{R} ^{\left| \mathcal{A} \right|}$,
$$
\mathrm{Proj}_{\Delta \left( \mathcal{A} \right)}\left( p \right) =\left( p+\lambda \mathbf{1}\right) _+
$$
where  $\lambda$ is a constant such that  $\sum_{a \in \mathcal{A}}{\left( p_a+\lambda \right) _+}=1$.
\label{proj}
\end{lemma}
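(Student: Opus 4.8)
The plan is to use the standard variational-inequality characterization of Euclidean projection onto a closed convex set: since $\Delta(\mathcal{A})$ is nonempty, convex and compact and the objective $\|y-p\|^2$ is strictly convex, the projection $y^\ast := \mathrm{Proj}_{\Delta(\mathcal{A})}(p)$ exists and is the unique point of $\Delta(\mathcal{A})$ satisfying $\langle p - y^\ast, y - y^\ast\rangle \le 0$ for every $y \in \Delta(\mathcal{A})$. Accordingly, it suffices to exhibit a scalar $\lambda$ for which the candidate $y^\ast = (p+\lambda\mathbf{1})_+$ lies in $\Delta(\mathcal{A})$ and then to verify this inequality; by uniqueness the candidate must then be the projection. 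An equivalent route writes the KKT system of the constrained quadratic program, with a single multiplier $\lambda$ for the constraint $\mathbf{1}^\top y = 1$ and multipliers $\nu_a \ge 0$ for $y \ge 0$: stationarity gives $y_a = p_a + \lambda + \nu_a$, and complementary slackness collapses this to $y_a = (p_a+\lambda)_+$. I find the variational-inequality route cleaner because it sidesteps any constraint-qualification discussion.

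First I would settle the existence of $\lambda$. Define $g(\lambda) := \sum_{a\in\mathcal{A}} (p_a + \lambda)_+$. This function is continuous, nondecreasing and piecewise linear in $\lambda$, with $g(\lambda) \to 0$ as $\lambda \to -\infty$ and $g(\lambda) \to +\infty$ as $\lambda \to +\infty$. By the intermediate value theorem there is a $\lambda$ with $g(\lambda) = 1$; fixing any such $\lambda$ guarantees that $y^\ast := (p+\lambda\mathbf{1})_+$ has nonnegative entries summing to one, i.e. $y^\ast \in \Delta(\mathcal{A})$.

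The core step is verifying the projection inequality. I would partition the indices into the support $S := \{a : p_a + \lambda > 0\}$ and its complement $Z := \{a: p_a+\lambda \le 0\}$. On $S$ one has $y^\ast_a = p_a + \lambda$, hence $(p - y^\ast)_a = -\lambda$; on $Z$ one has $y^\ast_a = 0$ and $(p-y^\ast)_a = p_a \le -\lambda$. For any $y \in \Delta(\mathcal{A})$, using $\sum_a y_a = \sum_a y^\ast_a = 1$ together with $y^\ast_a = 0$ on $Z$ gives $\sum_{a\in S}(y_a - y^\ast_a) = -\sum_{a\in Z} y_a$, so that $\langle p - y^\ast, y - y^\ast\rangle = -\lambda \sum_{a\in S}(y_a - y^\ast_a) + \sum_{a\in Z} p_a y_a = \sum_{a\in Z} (p_a+\lambda)\, y_a$. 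Each summand is the product of the nonpositive factor $p_a + \lambda$ (for $a \in Z$) with the nonnegative weight $y_a$, hence the whole expression is $\le 0$. This establishes the variational inequality and therefore $y^\ast = \mathrm{Proj}_{\Delta(\mathcal{A})}(p)$.

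I expect no serious obstacle: the only points requiring care are the existence of the threshold $\lambda$ (handled by monotonicity of $g$ and the intermediate value theorem) and the index bookkeeping around the boundary case $p_a + \lambda = 0$, which I assign to $Z$ so that the identity $y^\ast_a = 0$ and the sign of $p_a + \lambda$ stay consistent. Uniqueness of $y^\ast$ is immediate from strict convexity, so although $\lambda$ itself need not be unique when $g$ has a flat stretch at height $1$, the resulting vector $(p+\lambda\mathbf{1})_+$ is the same for all admissible $\lambda$.
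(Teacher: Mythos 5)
Your proof is correct, but it takes a genuinely different route from the paper's. The paper gives no self-contained argument at all: its ``proof'' of this lemma is a one-line remark that the identity follows from the KKT conditions of the projection problem, deferring all details to the reference it cites --- essentially the alternative route you sketch and then set aside. Your variational-inequality argument is more self-contained and more elementary: it verifies directly that the candidate $y^\ast=(p+\lambda\mathbf{1})_+$ lies in $\Delta(\mathcal{A})$ and satisfies $\langle p-y^\ast,\,y-y^\ast\rangle\le 0$ for all $y\in\Delta(\mathcal{A})$, and it supplies a piece the paper's statement leaves implicit, namely the \emph{existence} of the threshold $\lambda$, via your intermediate-value argument on $g(\lambda)=\sum_{a}(p_a+\lambda)_+$. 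The key computation checks out: with $S=\{a:p_a+\lambda>0\}$ and $Z$ its complement, the identity $\sum_{a\in S}(y_a-y^\ast_a)=-\sum_{a\in Z}y_a$ collapses the inner product to $\sum_{a\in Z}(p_a+\lambda)\,y_a\le 0$, which is exactly the variational inequality. What each approach buys: the paper's citation-plus-KKT route is shorter and matches how this fact is usually packaged in the literature, while yours avoids any appeal to constraint qualifications or external references and would let the paper stand alone. One small remark: your caveat about a possible ``flat stretch'' of $g$ at height $1$ is vacuous, since $g$ is strictly increasing wherever it is positive, so the $\lambda$ solving $g(\lambda)=1$ is in fact unique; but, as you note, uniqueness of the projection makes this immaterial in any case.
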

\begin{proof} This fact can be obtained by studying the KKT condition of the projection problem, see for example  \cite{simplex_projection} for details.
\end{proof}

\begin{remark} It's trivial to see that the projection onto probability simplex has a shift-invariant property. That is, $
\mathrm{Proj}_{\Delta \left( \mathcal{A} \right)}\left( p \right) =\mathrm{Proj}_{\Delta \left( \mathcal{A} \right)}\left( p+c \mathbf{1} \right) $ holds for arbitrary constant $c \in \mathbb{R}$.
\kw{Therefore, PPG and PQA can also be expressed in terms of advantages functions. For example, we have the following alternative expression for PPG: 
\begin{align*}
\pi _{s}^{k+1}=\mathrm{Proj}_{\Delta \left( \mathcal{A} \right)}\left( \pi_s^k + \frac{\eta_k d^k_\mu(s)}{1-\gamma}A^k(s,\cdot) \right), \quad \forall s\in \mathcal{S}.
\end{align*}}
\label{proj_property_1}
\end{remark}

\kw{Lemma \ref{proj} implies that the projection onto the probability simplex can be computed by first translating the vector with an offset, followed by truncating those negative values to zeros. Moreover, the next lemma provides a characterization on the support of the projection, which will be used frequently in our analysis.}

\begin{lemma}[Gap property]
Let $\mathcal{B}$ and $\mathcal{C}$ be two disjoint non-empty sets such that $\mathcal{A} =\mathcal{B}\cup \mathcal{C}$. Given an arbitrary vector $
p=\left( p_a \right) _{a\in \mathcal{A}}\in \mathbb{R} ^{\left| \mathcal{A} \right|}$, let $y=\mathrm{Proj}_{\Delta \left( \mathcal{A} \right)}\left( p \right)$. Then
$$
\forall a^\prime\in \mathcal{C},~ y_{a^\prime}=0 \Leftrightarrow \,\,\sum_{a\in \mathcal{B}}{\left( p_a-\underset{a^{\prime}\in \mathcal{C}}{\max}\,\,p_{a^{\prime}} \right) _+}\ge 1.
$$
\label{proj_property_2}    
\end{lemma}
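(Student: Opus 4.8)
The plan is to reduce the stated equivalence to a single inequality on the offset $\lambda$ supplied by Lemma~\ref{proj}, and then compare two truncations. By Lemma~\ref{proj}, we may write $y=\left(p+\lambda\mathbf{1}\right)_+$ where $\lambda$ is the constant with $\sum_{a\in\mathcal{A}}\left(p_a+\lambda\right)_+=1$. Set $M:=\max_{a'\in\mathcal{C}}p_{a'}$. The first step is to observe that, since $y_{a'}=\left(p_{a'}+\lambda\right)_+$, the left-hand condition ``$y_{a'}=0$ for all $a'\in\mathcal{C}$'' is exactly $p_{a'}+\lambda\le 0$ for every $a'\in\mathcal{C}$, which in turn is equivalent to $\lambda\le -M$. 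Thus it suffices to prove the clean equivalence $\lambda\le -M\iff \sum_{a\in\mathcal{B}}\left(p_a-M\right)_+\ge 1$, and the two remaining steps establish the two implications.

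For the forward implication, I would assume $\lambda\le -M$. Then for every $a'\in\mathcal{C}$ we have $p_{a'}+\lambda\le M+\lambda\le 0$, so the mass that $y$ places on $\mathcal{C}$ vanishes and the normalization $\sum_{a\in\mathcal{A}}\left(p_a+\lambda\right)_+=1$ collapses to $\sum_{a\in\mathcal{B}}\left(p_a+\lambda\right)_+=1$. Since the positive-part map $x\mapsto x_+$ is non-decreasing and $\lambda\le -M$, each summand obeys $\left(p_a+\lambda\right)_+\le\left(p_a-M\right)_+$, and summing over $\mathcal{B}$ gives $\sum_{a\in\mathcal{B}}\left(p_a-M\right)_+\ge 1$, as required.

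For the converse I would argue by contradiction: assume $\sum_{a\in\mathcal{B}}\left(p_a-M\right)_+\ge 1$ yet $\lambda>-M$. Monotonicity of the positive part again gives $\left(p_a+\lambda\right)_+\ge\left(p_a-M\right)_+$ for each $a\in\mathcal{B}$, so $\sum_{a\in\mathcal{B}}\left(p_a+\lambda\right)_+\ge 1$; meanwhile the maximizing index $a^\ast\in\mathcal{C}$ contributes $\left(M+\lambda\right)_+=M+\lambda>0$ to the mass on $\mathcal{C}$. Adding the two pieces forces $\sum_{a\in\mathcal{A}}\left(p_a+\lambda\right)_+>1$, contradicting the normalization, and hence $\lambda\le -M$. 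The only delicate point in the whole argument is bookkeeping the boundary case $\lambda=-M$: one must keep the inequalities non-strict where the mass on $\mathcal{C}$ can legitimately be zero, and extract the strict inequality only from the maximizer $a^\ast$ under the assumption $\lambda>-M$, so that the normalization constraint $\sum_{a\in\mathcal{A}}\left(p_a+\lambda\right)_+=1$ is genuinely violated. Everything else is routine monotonicity of truncation.
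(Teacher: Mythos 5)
Your proof is correct and follows essentially the same route as the paper's: both invoke Lemma~\ref{proj} to write $y=(p+\lambda\mathbf{1})_+$, reduce the support condition on $\mathcal{C}$ to the threshold inequality $\lambda\le-\max_{a'\in\mathcal{C}}p_{a'}$, and convert that into the truncated-sum criterion using the normalization $\sum_{a}(p_a+\lambda)_+=1$ together with monotonicity of $(\cdot)_+$. The only cosmetic difference is that you make the strictness argument explicit (via the maximizer's positive contribution in the contradiction step), whereas the paper compresses it into its equivalence $(b)$; the mathematical content is identical.
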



Roughly speaking, this lemma indicates that if the entries of  $p$ in the index set $\mathcal{C}$ are generally smaller than \kw{those} in the index set $\mathcal{B}$ and the cumulative gap is larger than 1, then the index set $\mathcal{C}$ will be excluded from the support set of the projection $y$. The proof of this lemma is essentially contained in the argument for Theorem~1 in \cite{simplex_projection}. \kw{To keep the
presentation self-contained, we give a short proof below.}

\begin{proof} [Proof of Lemma~\ref{proj_property_2}]
First note that $\forall a \in \calA$, 
$$
y_a = 0 \,\stackrel{(a)}{\Longleftrightarrow}\, \lambda \leq -p_a \,\stackrel{(b)}{\Longleftrightarrow}\, \sum_{a^\prime \in \calA} \left( p_{a^\prime} - p_a \right)_+ \geq 1,
$$
where $(a)$ follows from Lemma~\ref{proj} and $(b)$ is due to $\sum_{a\in\calA} (p_a + \lambda)_+ = 1$ and the monotonicity of $(\cdot)_+$. Thus we have
\begin{align*}
 y_{a^\prime}=0, ~~\forall a^\prime\in \mathcal{C} \Leftrightarrow \,\,\underset{a^\prime\in \mathcal{C}}{\min}\sum_{a\in \mathcal{A}}{\left( p_a-\,p_{a^{\prime}} \right) _+}\ge 1&\Leftrightarrow \sum_{a\in \mathcal{A}}{\left( p_a-\underset{a^\prime\in \mathcal{C}}{\max}\,\, p_{a^{\prime}} \right) _+}\ge 1 \\
 &\Leftrightarrow \sum_{a\in \mathcal{B}}{\left( p_a-\underset{a^\prime\in \mathcal{C}}{\max}\,\,p_{a^{\prime}} \right) _+}\ge 1,
\end{align*}
which completes the proof.
\end{proof}


\kw{Based on Lemma~\ref{proj}, we can now rewrite the one-step update of PPG in \eqref{simplex PG}
and PQA in \eqref{projected Q-descent} into a unified framework with an explicit expression for the projection. This is the prototype update that will be mainly analysed. }
 Given an input policy $\pi\in \Pi$ and step size $\eta>0$, the new policy $\pi^+$  is generated via 
\begin{align*}
\mbox{(Prototype Update)}\quad \quad \pi^+_{s,a}(\eta_s)=\left( \pi_{s,a} + \eta_s A_{s,a}^\pi +\lambda _s \right) _+,\quad\forall s\in \mathcal{S} ,a\in \mathcal{A}, \numberthis\label{policy update of PPG and PQD}
\end{align*}
where $\lambda_s$ is a constant such that $\sum_{a} {\pi}^+_{s,a} = 1$. Note that, given a constant step size $\eta$, $\eta _{s} = \frac{\eta d_{\mu}^{\pi}\left( s \right)}{1-\gamma}$ for PPG while $\eta_s=\eta$ for PQA.


\subsection{Basic properties of prototype update}

Before presenting the sublinear convergence of PPG, we give a brief discussion on the basic properties of the prototype update.  The lemma below presents all the possibilities for  the support set of the new policy $\pi^+_{s}(\eta)$ (the subscript $s$  in $\eta_s$ will be omitted in this section for simplicity).

\begin{lemma}
    Consider the prototype update in \eqref{policy update of PPG and PQD}. Denote by $\calB_s(\eta)$  the support set of $\pi^+_s$:
    \begin{align*}
        \calB_s(\eta) := \left\{ a: \pi^+_{s,a}(\eta) > 0 \right\}.
    \end{align*}
    Then for any $ \eta > 0$,  $\calB_s(\eta)$ \kw{admits} one of the following three forms:
    \begin{enumerate}
        \item $\calB_s(\eta) \subsetneqq \calA^\pi_s$,
        \item $\calB_s(\eta) = \calA^\pi_s$,
        \item $\calB_s(\eta) = \calA^\pi_s \cup \calC_s(\eta)$, where $\calC_s(\eta) \subseteq \calA \setminus \calA_s^\pi$ is not empty.
    \end{enumerate}
    \label{three cases of support set}
\end{lemma}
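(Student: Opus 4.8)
The plan is to reduce the statement to a single dichotomy: relative to the set of $\pi$-optimal actions $\calA^\pi_s$, the support $\calB_s(\eta)$ is always \emph{comparable} under inclusion, i.e.\ either $\calB_s(\eta)\subseteq\calA^\pi_s$ or $\calA^\pi_s\subseteq\calB_s(\eta)$. Indeed, if $\calB_s(\eta)\subseteq\calA^\pi_s$ we land in case~1 (proper inclusion) or case~2 (equality), while if $\calA^\pi_s\subseteq\calB_s(\eta)$ with $\calB_s(\eta)\neq\calA^\pi_s$ we may set $\calC_s(\eta):=\calB_s(\eta)\setminus\calA^\pi_s\neq\emptyset$ and obtain case~3. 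Hence the three listed forms are exhaustive \emph{precisely} when the following configuration is ruled out: some optimal action $a^\ast\in\calA^\pi_s$ is excluded from the support while some non-optimal action $a'\in\calA\setminus\calA^\pi_s$ is included. Excluding this configuration is the crux of the proof.

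Writing $p_a:=\pi_{s,a}+\eta A^\pi_{s,a}$ so that $\pi^+_{s,a}(\eta)=(p_a+\lambda_s)_+$, I will argue by contradiction, assuming $a^\ast$ is excluded and $a'$ is included. By the characterization derived inside the proof of Lemma~\ref{proj_property_2}, exclusion of $a^\ast$ is equivalent to $\sum_{a\in\calA}(p_a-p_{a^\ast})_+\geq 1$, that is, $\sum_{b\in\calB'}(p_b-p_{a^\ast})\geq 1$ where $\calB':=\{b:p_b>p_{a^\ast}\}$. Since $a'$ is in the support while $a^\ast$ is not, we have $p_{a'}>-\lambda_s\geq p_{a^\ast}$, so $a'\in\calB'$ while $a^\ast\notin\calB'$.

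The key estimate then splits each gap into a probability part and an advantage part:
\[
\sum_{b\in\calB'}(p_b-p_{a^\ast})
=\sum_{b\in\calB'}(\pi_{s,b}-\pi_{s,a^\ast})
+\eta\sum_{b\in\calB'}\left(A^\pi_{s,b}-A^\pi_{s,a^\ast}\right).
\]
For the probability part, since $a^\ast\notin\calB'$ and $\pi_{s,a^\ast}\geq 0$, it is at most $\sum_{b\in\calB'}\pi_{s,b}\leq 1$. For the advantage part, $a^\ast\in\calA^\pi_s$ means $A^\pi_{s,a^\ast}=\max_a A^\pi_{s,a}$, so every summand is $\leq 0$; moreover the summand for the non-optimal $a'\in\calB'$ obeys $A^\pi_{s,a'}-A^\pi_{s,a^\ast}<0$ strictly, so with $\eta>0$ the advantage part is strictly negative. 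Combining the two bounds gives $\sum_{b\in\calB'}(p_b-p_{a^\ast})<1$, contradicting the exclusion condition $\geq 1$. This contradiction rules out the bad configuration and establishes the claimed trichotomy.

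I expect the main obstacle to be the temptation to compare the scalar scores $p_{a^\ast}$ and $p_{a'}$ directly: a non-optimal action with large current probability $\pi_{s,a'}$ can have a strictly larger score than an optimal action, so a pointwise ``optimal dominates non-optimal'' statement is simply false. The resolution is to work with the \emph{cumulative} gap condition of Lemma~\ref{proj_property_2}, which encodes the simplex normalization; the normalization caps the probability contribution by $1$, and the strictly negative advantage contribution of the included non-optimal action is exactly what drives the cumulative gap below the threshold $1$. Notably, this argument uses only that $A^\pi_{s,a^\ast}$ is the maximal advantage, together with $\pi_{s,a^\ast}\ge 0$ and $\sum_a\pi_{s,a}=1$, and does not require the identity $\sum_a\pi_{s,a}A^\pi_{s,a}=0$.
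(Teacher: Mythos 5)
Your proof is correct, and it rests on the same two pillars as the paper's own proof: the reduction to showing that an included non-optimal action forces every $\pi$-optimal action into the support, and the cumulative-gap characterization of exclusion coming from Lemma~\ref{proj_property_2}, with each gap split into a probability part (bounded by $1$ via $\sum_a \pi_{s,a}=1$) and an advantage part (nonpositive, with strictness supplied by a non-optimal action). The organizational difference is genuine, however, and it buys you something. The paper fixes an optimal $\hat{a}$ and bounds $I=\sum_{a\in\calA}\bigl(\pi_{s,a}+\eta A^\pi_{s,a}-(\pi_{s,\hat{a}}+\eta A^\pi_{s,\hat{a}})\bigr)_+$ directly, which forces a case split on whether the index set $\hat{\calA}_s=\{a:\pi_{s,a}+\eta A^\pi_{s,a}\geq \pi_{s,\hat{a}}+\eta A^\pi_{s,\hat{a}}\}$ contains a non-optimal action: when it does, the argument is the one you give; when it does not, the paper instead concludes via $I\leq \pi_s(\calA^\pi_s)<1$, where the strict inequality $\pi_s(\calA^\pi_s)<1$ is justified only by the terse remark that $\calB_s(\eta)$ contains a non-optimal action (making that rigorous requires a further argument, e.g.\ via $\lambda_s$ or the identity $\sum_a\pi_{s,a}A^\pi_{s,a}=0$). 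Your contradiction framing makes this second case vanish: assuming $a^\ast$ excluded and $a'$ included yields $p_{a'}>-\lambda_s\geq p_{a^\ast}$, so the included non-optimal action automatically lies in $\calB'=\{b:p_b>p_{a^\ast}\}$ and contributes the strictly negative advantage term, giving $\sum_{b\in\calB'}(p_b-p_{a^\ast})<1$ in a single stroke. As you note, this also means the zero-mean identity for advantages is never needed, whereas the paper's second case implicitly leans on a claim of that flavor; the only price is the (trivial) contrapositive reformulation at the start.
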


\begin{proof}  Without loss of generality, assume $\mathcal{A}^\pi_s\neq \mathcal{A}$. Then
\kw{it suffices to} show that if $\calB_s(\eta)$ contains an action $a^\prime \notin \calA^\pi_s$, all $\pi$-optimal actions are included in $\calB_s(\eta)$. \ljc{
Given any $
\hat{a} \in\mathcal{A}_s^\pi$, define  $$
\hat{\mathcal{A}}_s=\left\{ a \in \calA: \pi _{s,a}+\eta A_{s,a}^{\pi} \ge \pi _{s,\hat{a}}+\eta A_{s,\hat{a}}^{\pi} \right\}.$$ Next we will show that  
\begin{align*}
I:=&\sum_{a\in \mathcal{A}}{\left( \pi _{s,a}+\eta A_{s,a}^{\pi}-\left( \pi _{s,\hat{a}}+\eta A_{s,\hat{a}}^{\pi} \right) \right) _+}\\
=&\sum_{a\in \hat{\mathcal{A}}_s}{\left( \pi _{s,a}+\eta A_{s,a}^{\pi}-\left( \pi _{s,\hat{a}}+\eta A_{s,\hat{a}}^{\pi} \right) \right)}
\\
\,\,                                             =&\sum_{a\in \hat{\mathcal{A}}_s \cap \mathcal{A} _{s}^{\pi}}{\left( \pi _{s,a}-\pi _{s,\hat{a}} \right)}+\sum_{a\in \hat{\mathcal{A}}_s\setminus \mathcal{A} _{s}^{\pi}}{\left( \pi _{s,a}-\pi _{s,\hat{a}}+\eta \left( A_{s,a}^{\pi}-A_{s,\hat{a}}^{\pi} \right) \right)}
<1,
\end{align*}
from which the claim follows immediately using Lemma~\ref{proj_property_2}.

If $\tilde{\mathcal{A}}_s\setminus \mathcal{A} _{s}^{\pi}\ne \emptyset$, then 
\begin{align*}
\sum_{a\in \hat{\mathcal{A}}_s\setminus \mathcal{A} _{s}^{\pi}}{\left( \pi _{s,a}-\pi _{s,\hat{a}}+\eta \left( A_{s,a}^{\pi}-A_{s,\hat{a}}^{\pi} \right) \right)} <\sum_{a\in \hat{\mathcal{A}}_s\setminus \mathcal{A} _{s}^{\pi}}{\left(\pi _{s,a}-\pi _{s,\hat{a}} \right)}, 
\end{align*}
since $A_{s,a}^{\pi}<A_{s,\hat{a}}^{\pi}$ for $a\in \hat{\mathcal{A}}_s\setminus \mathcal{A} _{s}^{\pi}$. Consequently,
\begin{align*}
    I <\sum_{a\in \hat{\mathcal{A}}_s \cap \mathcal{A} _{s}^{\pi}}{\left( \pi _{s,a}-\pi _{s,\hat{a}} \right)}+\sum_{a\in \hat{\mathcal{A}}_s\setminus \mathcal{A} _{s}^{\pi}}{\left(\pi _{s,a}-\pi _{s,\hat{a}} \right)}\leq 1.
\end{align*}
On the other hand, if $\hat{\mathcal{A}}_s\setminus \mathcal{A} _{s}^{\pi}= \emptyset$ , one has $
\hat{\mathcal{A}}_s\subset\mathcal{A} _{s}^{\pi}$. In this case, 
$$
I=\sum_{a\in \mathcal{A} _{s}^{\pi}}{\left( \pi _{s,a}-\pi _{s,\hat{a}} \right)}\le \pi _s\left( \mathcal{A} _{s}^{\pi} \right) <1,
$$
where the last inequality is due to the fact that  $\calB_s(\eta)$ contains an action $a^\prime \notin \calA^\pi_s$.
}
\end{proof}

The \kw{last lemma} implies that at least one $\pi$-optimal action is included in the support set $\calB_s(\eta)$. In addition, it is not hard to verify that  when step size $\eta$ goes to infinity every $\pi$-suboptimal actions will be excluded from the support set of $\pi^+_s$, \kw{which suggests that  $\calB_s(\eta)$ might shrink} as $\eta$ increases. The following lemma confirms that this observation is indeed true. 

\begin{lemma}
    For $\eta_1 > \eta_2 > 0$ we have $$\calB_s(\eta_1) \subseteq \calB_s(\eta_2).$$
    \label{inclusion relationship of support set}
\end{lemma}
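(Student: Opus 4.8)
The plan is to bypass the three-case structure of Lemma~\ref{three cases of support set} entirely and reduce the whole statement to the exclusion characterization that was already extracted inside the proof of Lemma~\ref{proj_property_2}. Writing the pre-projection vector as $p_a(\eta):=\pi_{s,a}+\eta A_{s,a}^{\pi}$, so that $\pi^+_s=\mathrm{Proj}_{\Delta(\calA)}(p(\eta))$ by Remark~\ref{proj_property_1} and Lemma~\ref{proj}, the equivalence $(a)\Leftrightarrow(b)$ in that proof reads
\[
a\in\calB_s(\eta)\iff \sum_{a'\in\calA}\bigl(p_{a'}(\eta)-p_a(\eta)\bigr)_+<1 .
\]
The entire argument then rests on one elementary observation: the pre-projection vectors at two step sizes are affinely related through the feasible point $\pi_s$. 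Setting $t:=\eta_2/\eta_1\in(0,1)$ and using $t\eta_1=\eta_2$, a one-line computation gives $p_a(\eta_2)=t\,p_a(\eta_1)+(1-t)\pi_{s,a}$ for every $a$, i.e.\ the smaller-step point is a convex combination of the larger-step point and the current policy $\pi_s$.

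First I would fix $a\in\calB_s(\eta_1)$ and aim at $a\in\calB_s(\eta_2)$. Taking differences in the identity above yields $p_{a'}(\eta_2)-p_a(\eta_2)=t\bigl(p_{a'}(\eta_1)-p_a(\eta_1)\bigr)+(1-t)\bigl(\pi_{s,a'}-\pi_{s,a}\bigr)$, and applying the subadditivity and positive homogeneity of $(\cdot)_+$ term by term and summing over $a'$ gives
\[
\sum_{a'}\bigl(p_{a'}(\eta_2)-p_a(\eta_2)\bigr)_+ \le t\sum_{a'}\bigl(p_{a'}(\eta_1)-p_a(\eta_1)\bigr)_+ +(1-t)\sum_{a'}\bigl(\pi_{s,a'}-\pi_{s,a}\bigr)_+ .
\]
The two sums on the right are controlled separately: the first is strictly below $1$ because $a\in\calB_s(\eta_1)$, by the characterization above; the second is at most $1$ simply because $\pi_s$ lies in the simplex, since $\pi_{s,a}\ge0$ forces $(\pi_{s,a'}-\pi_{s,a})_+\le\pi_{s,a'}$ and $\sum_{a'}\pi_{s,a'}=1$. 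As $t>0$, the right-hand side is a strict convex combination of a quantity $<1$ and a quantity $\le1$, hence is itself $<1$; invoking the characterization once more yields $a\in\calB_s(\eta_2)$, i.e.\ $\calB_s(\eta_1)\subseteq\calB_s(\eta_2)$.

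The only point I would double-check is that this estimate works uniformly over all $a$, optimal or non-optimal, so that no split into the cases of Lemma~\ref{three cases of support set} is needed; all the structural content is absorbed into the single inequality $\sum_{a'}(\pi_{s,a'}-\pi_{s,a})_+\le1$, which holds precisely because $\pi_s$ is a probability distribution. I would also verify that strictness is genuinely preserved: it is the strict bound on the first sum together with the positive weight $t=\eta_2/\eta_1$ that makes the final inequality strict, even though the second sum is only known to be $\le1$. I expect the main temptation to avoid is the more pedestrian route of tracking $\eta\mapsto\lambda_s(\eta)$ or arguing coordinatewise monotonicity of $\eta\mapsto\pi^+_{s,a}(\eta)$; those quantities need not behave monotonically action-by-action and would force a delicate case analysis, whereas the convex-combination identity makes the shrinkage of the support essentially automatic.
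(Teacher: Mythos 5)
Your proof is correct, and it takes a genuinely different route from the paper's. The paper argues by contrapositive with a case analysis on the structure of $\calB_s(\eta_2)$ (via Lemma~\ref{three cases of support set}): when $\calB_s(\eta_2)\subseteq\calA_s^\pi$ it uses the sign of advantage gaps for $\pi$-optimal actions, and in the remaining case it must first extract the intermediate fact $\sum_{a\in\calB_s(\eta_2)}(A^\pi_{s,a}-A^\pi_{s,a'})\geq 0$ before comparing the two step sizes. You instead work directly from the exclusion characterization
\begin{align*}
a\in\calB_s(\eta)\iff \sum_{a'\in\calA}\bigl(p_{a'}(\eta)-p_a(\eta)\bigr)_+<1,
\end{align*}
which the paper itself establishes (it is the equivalence in the proof of Lemma~\ref{proj_property_2}, and is restated as \eqref{eq: gap2}), and exploit the affine identity $p(\eta_2)=t\,p(\eta_1)+(1-t)\pi_s$ with $t=\eta_2/\eta_1\in(0,1)$. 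Subadditivity and positive homogeneity of $(\cdot)_+$, together with $\sum_{a'}(\pi_{s,a'}-\pi_{s,a})_+\leq\sum_{a'}\pi_{s,a'}=1$, then give the strict bound in one step; your strictness bookkeeping (strict $<1$ from the first sum scaled by $t>0$, only $\leq 1$ needed from the second) is exactly right. What your approach buys: it is shorter, needs no case analysis, and is strictly more general --- it never uses that the ascent direction is an advantage function (i.e.\ that $\calA_s^\pi$ maximizes $A^\pi_s$), so it shows that the support of $\mathrm{Proj}_{\Delta(\calA)}(\pi_s+\eta v)$ is non-increasing in $\eta$ for an arbitrary direction $v$, whereas the paper's argument is tied to the advantage structure. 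What the paper's route buys is coherence with its surrounding machinery: Lemma~\ref{three cases of support set} is developed anyway and reused later, so the case analysis comes at little marginal cost there.
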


\begin{proof}
Since the relation holds trivially when $\calB_s(\eta_2)=\mathcal{A}$, we only consider the case $\calB_s(\eta_2)\neq\mathcal{A}$.
 First the application of Lemma \ref{proj_property_2} yields that
    \begin{align}
        \sum_{a\in\calB_s(\eta)} \left[ \pi_{s,a} + \eta A^\pi_{s,a} - \max_{a^\prime \not\in\calB_s(\eta)} (\pi_{s,a^\prime} + \eta A^\pi_{s,a^\prime})  \right]_+ \geq 1
        \label{eq: gap1}
    \end{align}
    and
    \begin{align}
        a^\prime \not\in \calB_s(\eta) \Longleftrightarrow \sum_{a\neq a^\prime} \left[ \pi_{s,a} + \eta A^\pi_{s,a} - (\pi_{s,a^\prime} + \eta A^\pi_{s,a^\prime})  \right]_+ \geq 1.
        \label{eq: gap2}
    \end{align}
 If $\calB_s(\eta_2) \subseteq \calA^\pi_s$ (i.e. the first two cases in Lemma \ref{three cases of support set}), then for any $a^\prime \not\in \calB_s(\eta_2)$ we have
    \begin{align*}
        &\sum_{a\neq a^\prime} \left[ \pi_{s,a} + \eta_1 A^\pi_{s,a} - (\pi_{s,a^\prime} + \eta_1 A^\pi_{s,a^\prime}) \right]_+\\
        &\geq \sum_{a\in\calB_s(\eta_2)} \left[ \pi_{s,a} + \eta_1 A^\pi_{s,a} - (\pi_{s,a^\prime} + \eta_1 A^\pi_{s,a^\prime}) \right]_+ \\
        &\stackrel{(a)}{\geq} \sum_{a\in\calB_s(\eta_2)} \left[ \pi_{s,a} + \eta_2 A^\pi_{s,a} - (\pi_{s,a^\prime} + \eta_2 A^\pi_{s,a^\prime}) \right]_+ \geq 1,
    \end{align*}
    where $(a)$ is due to the fact $(A^\pi_{s,a} - A^\pi_{s,a^\prime}) \geq 0$ for $\forall a \in \calB_s(\eta_2) \subseteq \calA^\pi_s$. This implies that $a^\prime \not\in \calB_s(\eta_1)$, and thus $\calB_s(\eta_1) \subseteq \calB_s(\eta_2)$.

    For the case that $\calB_s(\eta_2) = \calA_s \cup \calC_s(\eta_2)$, fixing $a^\prime \not\in\calB_s(\eta_2)$, it follows from \eqref{eq: gap1} that
    \begin{align*}
        &\sum_{a\in\calB_s(\eta_2)} \left[ \pi_{s,a} + \eta_2 A^\pi_{s,a} -  (\pi_{s,a^\prime} + \eta_2 A^\pi_{s,a^\prime})  \right]_+  \\
        \geq &\sum_{a\in\calB_s(\eta_2)} \left[ \pi_{s,a} + \eta_2 A^\pi_{s,a} - \max_{a^\prime \not\in\calB_s(\eta_2)} (\pi_{s,a^\prime} + \eta_2 A^\pi_{s,a^\prime})  \right]_+ \geq 1.
    \end{align*}
    Furthermore, since $a^\prime \not\in \calB_s(\eta_2)$ it is trivial to see that $\forall a\in\calB_s(\eta_2)$,
    $
\pi_{s,a} + \eta_2 A^\pi_{s,a} > \pi_{s,a^\prime} + \eta_2 A^\pi_{s,a^\prime}.
    $
    Therefore,
    \begin{align*}
        &\sum_{a\in\calB_s(\eta_2)} \left[ \pi_{s,a} + \eta_2 A^\pi_{s,a} -  (\pi_{s,a^\prime} + \eta_2 A^\pi_{s,a^\prime})  \right]_+  \\
        &=
        \sum_{a\in\calB_s(\eta_2)} \left[ \pi_{s,a} + \eta_2 A^\pi_{s,a} -  (\pi_{s,a^\prime} + \eta_2 A^\pi_{s,a^\prime})  \right]   \\
        &= \sum_{a\in\calB_s(\eta_2)} \left[ \pi_{s,a} -  \pi_{s,a^\prime} + \eta_2( A^\pi_{s,a} - A^\pi_{s,a^\prime})  \right] \\
        &\geq 1,
    \end{align*}
    which yields 
    $
        \sum_{a\in\calB_s(\eta_2)} (A^\pi_{s,a} - A^\pi_{s,a^\prime}) \geq 0,
    $
    as $\eta_2 > 0$. \kw{Consequently},
    \begin{align*}
        &\sum_{a\in\calB_s(\eta_2)} \left[ \pi_{s,a} - \pi_{s,a^\prime} + \eta_1 (A^\pi_{s,a} - A^\pi_{s,a^\prime})  \right]_+ \hspace{-.1cm}- \hspace{-.2cm}\sum_{a\in\calB_s(\eta_2)} \left[ \pi_{s,a} - \pi_{s,a^\prime} + \eta_2 (A^\pi_{s,a} - A^\pi_{s,a^\prime})  \right]_+ \\
        &=\sum_{a\in\calB_s(\eta_2)} \left[ \pi_{s,a} - \pi_{s,a^\prime} + \eta_1 (A^\pi_{s,a} - A^\pi_{s,a^\prime})  \right]_+ \hspace{-.1cm}- \hspace{-.2cm}\sum_{a\in\calB_s(\eta_2)} \left[ \pi_{s,a} - \pi_{s,a^\prime} + \eta_2 (A^\pi_{s,a} - A^\pi_{s,a^\prime})  \right] \\
        &\geq\sum_{a\in\calB_s(\eta_2)} \left[ \pi_{s,a} - \pi_{s,a^\prime} + \eta_1 (A^\pi_{s,a} - A^\pi_{s,a^\prime})  \right] - \hspace{-.2cm}\sum_{a\in\calB_s(\eta_2)} \left[ \pi_{s,a} - \pi_{s,a^\prime} + \eta_2 (A^\pi_{s,a} - A^\pi_{s,a^\prime})  \right] \\
        &=(\eta_1 - \eta_2)\sum_{a\in\calB_s(\eta_2)}(A^\pi_{s,a} - A^\pi_{s,a^\prime}) > 0,
    \end{align*}
    yielding 
    \begin{align*}
        &\sum_{a\neq a^\prime} \left[ \pi_{s,a} - \pi_{s,a^\prime} + \eta_1 (A^\pi_{s,a} - A^\pi_{s,a^\prime})  \right]_+ \geq \sum_{a\in\calB_s(\eta_2)} \left[ \pi_{s,a} - \pi_{s,a^\prime} + \eta_1 (A^\pi_{s,a} - A^\pi_{s,a^\prime})  \right]_+\\
        &\geq \sum_{a\in\calB_s(\eta_2)} \left[ \pi_{s,a} - \pi_{s,a^\prime} + \eta_2 (A^\pi_{s,a} - A^\pi_{s,a^\prime})  \right]_+ \geq 1.
    \end{align*}
    Together with \eqref{eq: gap2} we have $a^\prime \not\in\calB_s(\eta_1)$, which implies $\calB_s(\eta_1) \subseteq \calB_s(\eta_2)$.
\end{proof}

The following lemma shows that $A^{\pi}_{s,a}$ should be sufficiently large in order to be included in the support set of $\pi_s^+$, which is reasonable.

\begin{lemma}  Consider the prototype update in \eqref{policy update of PPG and PQD}. We have
$$
A_{s,a}^{\pi}\ge \underset{{\tilde{a}}\in \mathcal{A}}{\max}\,\,A_{s,{\tilde{a}}}^{\pi}-\frac{2\pi _s\left( \mathcal{A} \setminus \mathcal{A} _{s}^{\pi} \right)}{\eta},\quad \forall a\in \calB_s(\eta).
$$
\label{lemma: advantage lower bound for support actions}
\end{lemma}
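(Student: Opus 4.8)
The plan is to reduce to the nontrivial case and then exploit the explicit additive form of the update on the support together with an averaging over the $\pi$-optimal actions. Fix a state $s$, write $\beta := \pi_s(\mathcal{A}\setminus\mathcal{A}_s^\pi)$ and $M := \max_{\tilde a\in\mathcal{A}} A_{s,\tilde a}^\pi$, so that $A_{s,\hat a}^\pi = M$ for every $\hat a\in\mathcal{A}_s^\pi$. If $a\in\mathcal{A}_s^\pi$ then $A_{s,a}^\pi = M$ and the claimed inequality is trivial, so from now on I would assume $a\in\calB_s(\eta)$ with $a\notin\mathcal{A}_s^\pi$.

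The first key observation is that in this situation the support must be of the third type in Lemma~\ref{three cases of support set}: since $\calB_s(\eta)$ contains the non-$\pi$-optimal action $a$, we have $\calB_s(\eta)=\mathcal{A}_s^\pi\cup\calC_s(\eta)$, and in particular $\mathcal{A}_s^\pi\subseteq\calB_s(\eta)$. Hence both $a$ and every $\hat a\in\mathcal{A}_s^\pi$ lie in the support, so by the prototype update \eqref{policy update of PPG and PQD} the positive-part truncation is inactive for all of them, giving the exact identities $\pi^+_{s,a}=\pi_{s,a}+\eta A_{s,a}^\pi+\lambda_s$ and $\pi^+_{s,\hat a}=\pi_{s,\hat a}+\eta A_{s,\hat a}^\pi+\lambda_s$ with a common offset $\lambda_s$.

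Subtracting these identities and summing over all $\hat a\in\mathcal{A}_s^\pi$ (where, crucially, $A_{s,\hat a}^\pi=M$ is constant) cancels $\lambda_s$ and yields
\begin{align*}
|\mathcal{A}_s^\pi|\,\eta\,(M-A_{s,a}^\pi) = \big(\pi^+_s(\mathcal{A}_s^\pi)-\pi_s(\mathcal{A}_s^\pi)\big) + |\mathcal{A}_s^\pi|\big(\pi_{s,a}-\pi^+_{s,a}\big).
\end{align*}
I would then bound the right-hand side termwise: $\pi^+_s(\mathcal{A}_s^\pi)\le 1$ and $\pi_s(\mathcal{A}_s^\pi)=1-\beta$ force the first bracket to be $\le\beta$, while $\pi^+_{s,a}\ge 0$ and $\pi_{s,a}\le\beta$ (as $a$ is non-$\pi$-optimal) give $\pi_{s,a}-\pi^+_{s,a}\le\beta$. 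Dividing by $|\mathcal{A}_s^\pi|\ge 1$ produces $\eta(M-A_{s,a}^\pi)\le \beta/|\mathcal{A}_s^\pi|+\beta\le 2\beta$, which is exactly the claim after rearranging.

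The step I would treat most carefully—and the genuine obstacle—is the passage to the summation over the \emph{whole} set $\mathcal{A}_s^\pi$. A naive argument using a single $\pi$-optimal action $\hat a$ fails, because $\pi^+_{s,\hat a}-\pi_{s,\hat a}$ can be as large as order $1$ for an individual action; only the aggregate increase $\pi^+_s(\mathcal{A}_s^\pi)-\pi_s(\mathcal{A}_s^\pi)$ is controlled by $\beta$. Averaging over $\mathcal{A}_s^\pi$ is what converts this aggregate control into a per-action bound, and it hinges on the fact that every $\pi$-optimal action shares the same advantage $M$, so that the left-hand side factors cleanly as $|\mathcal{A}_s^\pi|\,\eta\,(M-A_{s,a}^\pi)$.
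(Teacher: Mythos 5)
Your proof is correct, and it takes a mildly but genuinely different route from the paper's. The paper applies the gap property (Lemma~\ref{proj_property_2}) directly to the support action $a$: since $a\in\calB_s(\eta)$, the cumulative gap sum $\sum_{a'\in\mathcal{A}}\left( \pi_{s,a'}+\eta A^\pi_{s,a'}-\pi_{s,a}-\eta A^\pi_{s,a} \right)_+$ must be strictly less than $1$; restricting this sum to $\mathcal{A}_s^\pi$ and dropping the positive parts gives $\eta\left( \max_{\tilde a}A^\pi_{s,\tilde a}-A^\pi_{s,a} \right)\le \pi_{s,a}+\frac{1-\pi_s(\mathcal{A}_s^\pi)}{|\mathcal{A}_s^\pi|}$. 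You instead invoke Lemma~\ref{three cases of support set} to obtain the inclusion $\mathcal{A}_s^\pi\subseteq\calB_s(\eta)$, use the exact (untruncated) affine form of the update \eqref{policy update of PPG and PQD} on support actions to cancel the multiplier $\lambda_s$ by differencing, and then apply the normalization bounds $\pi^+_s(\mathcal{A}_s^\pi)\le 1$ and $\pi^+_{s,a}\ge 0$. Notably, the two derivations land on literally the same intermediate inequality, $\eta\left( \max_{\tilde a}A^\pi_{s,\tilde a}-A^\pi_{s,a} \right)\le \pi_{s,a}+\frac{\pi_s(\mathcal{A}\setminus\mathcal{A}_s^\pi)}{|\mathcal{A}_s^\pi|}$, so the final arithmetic is shared. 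What your route buys is transparency: an exact identity with a clear interpretation (the aggregate probability mass gained by optimal actions and lost by $a$) replaces a one-sided chain of inequalities built on the gap criterion, and your closing remark correctly identifies why averaging over all of $\mathcal{A}_s^\pi$ is essential. What it costs is an extra dependency: your argument needs the inclusion $\mathcal{A}_s^\pi\subseteq\calB_s(\eta)$ from Lemma~\ref{three cases of support set}, whose own proof rests on the gap property, so the reliance on Lemma~\ref{proj_property_2} is not eliminated but pushed one level down; the paper's proof uses the three-cases lemma only for the trivial reduction to its third case and otherwise needs nothing beyond the gap property itself.
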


\begin{proof} It suffices to consider the third case  in Lemma \ref{three cases of support set}. According to Lemma \ref{proj_property_2}, for any action $a \in \calB_s(\eta) \setminus \calA_s^\pi$, we have
\begin{align*}
1  &> \sum_{a^\prime\in \mathcal{A}}{\left( \pi _{s,a^\prime}+\eta A_{s,a^\prime}^{\pi}-\pi _{s,a}-\eta A_{s,a}^{\pi} \right) _+}
\\
\,\,&\ge \sum_{a^\prime\in \mathcal{A} _{s}^{\pi}}{\left( \pi _{s,a^\prime}+\eta A_{s,a^\prime}^{\pi}-\pi _{s,a}-\eta A_{s,a}^{\pi} \right) _+}
\\
\,\,&=\sum_{a^\prime\in \mathcal{A} _{s}^{\pi}}{\left( \pi _{s,a^\prime}-\pi _{s,a}+\eta\left( \underset{\tilde{a}\in \mathcal{A}}{\max}A_{s,\tilde{a}}^{\pi}-A_{s,a}^{\pi} \right) \right) _+}
\\
\,\,&\ge \sum_{a^\prime\in \mathcal{A} _{s}^{\pi}}{\left( \pi _{s,a^\prime}-\pi _{s,a}+\eta\left( \underset{\tilde{a}\in \mathcal{A}}{\max}A_{s,\tilde{a}}^{\pi}-A_{s,a}^{\pi} \right) \right)}
\\
\,\,&= \pi _s\left( \mathcal{A} _{s}^{\pi} \right) +\left| \mathcal{A} _{s}^{\pi} \right|\left( \eta\left( \underset{\tilde{a}\in \mathcal{A}}{\max}A_{s,\tilde{a}}^{\pi}-A_{s,a}^{\pi} \right) -\pi _{s,a} \right).
\end{align*}
It follows that 
\[
\eta\left( \underset{\tilde{a}\in \mathcal{A}}{\max}A_{s,\tilde{a}}^{\pi}-A_{s,a}^{\pi} \right) \le \pi _{s,a}+\frac{1-\pi _s\left( \mathcal{A} _{s}^{\pi} \right)}{\left| \mathcal{A} _{s}^{\pi} \right|}\le \left( 1\hspace{-0.1cm}+\hspace{-0.1cm}\frac{1}{\left| \mathcal{A} _{s}^{\pi} \right|} \right) \pi _s\left( \mathcal{A} \setminus \mathcal{A} _{s}^{\pi} \right) \le 2\pi _s\left( \mathcal{A} \setminus \mathcal{A} _{s}^{\pi} \right) .
\]
The proof is complete after rearrangement.
\end{proof}

\section{Sublinear convergence of PPG for any constant step size}
\label{Sublinear Convergence of PPG}
As already mentioned, the sublinear convergence of PQA for any constant step size has already been developed in \cite{Xiao_2022}. Even though PPG and PQA are overall similar to each other, to the best of our knowledge, the  technique  for the sublinear convergence analysis of PQA cannot be used to establish the sublinear convergence of PPG for any constant step size due to the existence of the visitation measure. Instead, we we fill this gap by  utilizing the explicit form of the projection onto the probability simplex to establish the lower bound for the one-step improvement,
$$
\sum_{a\in \mathcal{A}}{\pi _{s,a}^{k+1}A_{s,a}^{k}}\ge\frac{\left( \underset{a\in \mathcal{A}}{\max}\,A_{s,a}^{k} \right) ^2}{ \underset{a\in \mathcal{A}}{\max}\,A_{s,a}^{k}+C}, \quad\forall s\in\mathcal{S}.
$$ 
Combining this result with the performance difference lemma yields that
$$
V^{k+1}\left( \rho \right) -V^k\left( \rho \right) \ge \mathcal{O} \left( \left( V^*\left( \rho \right) -V^k\left( \rho \right) \right) ^2 \right),$$
which directly implies the sublinear convergence of PPG.

Following the notation in the prototype  update, the key ingredient in our analysis is 
\begin{align*}
    f_s(\eta_s) := \sum_{a\in\calA} \pi^+_{s,a}(\eta_s) A^\pi_{s,a},
\end{align*}
where $\eta_s = \frac{\eta d^\pi_\mu(s)}{1-\gamma}$ for PPG. We first give an expression for $f_s(\eta_s)$.

\begin{lemma}[Improvement expression] \label{Improvement expression}
Consider the prototype update  in  \eqref{policy update of PPG and PQD}.  For any $\eta_s > 0$ one has

\begin{align*}
    f_s(\eta_s) &= \eta_s \left( \sum_{a\in\calB_s(\eta_s)} \left( A^\pi_{s,a} \right)^2 - \frac{1}{|\calB_s(\eta_s)|}\left( \sum_{a\in\calB_s(\eta_s)} A^\pi_{s,a} \right)^2 \right) \\
    &+ \sum_{a^\prime \in \calA \setminus \calB_s(\eta_s)} \pi_{s,a^\prime} \left( \frac{1}{|\calB_s(\eta_s)|} \sum_{a\in\calB_s(\eta_s)} (A^\pi_{s,a} - A^\pi_{s,a^\prime}) \right).
\end{align*}
\end{lemma}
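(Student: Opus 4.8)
The plan is to exploit the explicit description of the update on its support set and then collapse the remaining terms using the elementary identity $\sum_{a\in\calA}\pi_{s,a}A^\pi_{s,a}=0$. Fix a state $s$ and abbreviate $\calB:=\calB_s(\eta_s)$. By the definition of the support set together with Lemma~\ref{proj}, we have $\pi^+_{s,a}=\pi_{s,a}+\eta_s A^\pi_{s,a}+\lambda_s$ for every $a\in\calB$ and $\pi^+_{s,a}=0$ otherwise. Imposing the normalization $\sum_{a\in\calB}\pi^+_{s,a}=1$ determines the offset explicitly,
\begin{align*}
\lambda_s=\frac{1}{|\calB|}\left(1-\sum_{a\in\calB}\pi_{s,a}-\eta_s\sum_{a\in\calB}A^\pi_{s,a}\right).
\end{align*}

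Next I would substitute this expression for $\lambda_s$ into $f_s(\eta_s)=\sum_{a\in\calB}\big(\pi_{s,a}+\eta_s A^\pi_{s,a}+\lambda_s\big)A^\pi_{s,a}$ and expand. The terms carrying the factor $\eta_s$ are $\eta_s\sum_{a\in\calB}(A^\pi_{s,a})^2$, coming from the $\eta_s A^\pi_{s,a}$ contribution, and $-\frac{\eta_s}{|\calB|}\big(\sum_{a\in\calB}A^\pi_{s,a}\big)^2$, coming from $\lambda_s$; together these give exactly the first summand $\eta_s\big(\sum_{a\in\calB}(A^\pi_{s,a})^2-\frac{1}{|\calB|}(\sum_{a\in\calB}A^\pi_{s,a})^2\big)$ claimed in the statement.

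It then remains to identify the step-size-free part, which after the substitution equals $\sum_{a\in\calB}\pi_{s,a}A^\pi_{s,a}+\frac{1}{|\calB|}\big(1-\sum_{a\in\calB}\pi_{s,a}\big)\sum_{a\in\calB}A^\pi_{s,a}$, with the target expression $\sum_{a'\in\calA\setminus\calB}\pi_{s,a'}\big(\frac{1}{|\calB|}\sum_{a\in\calB}A^\pi_{s,a}-A^\pi_{s,a'}\big)$. The key (and essentially only nonroutine) observation is the identity $\sum_{a\in\calA}\pi_{s,a}A^\pi_{s,a}=0$, which follows from $A^\pi=Q^\pi-V^\pi$ together with $V^\pi(s)=\sum_a\pi(a|s)Q^\pi(s,a)$. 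Combined with $\sum_{a\in\calA}\pi_{s,a}=1$, it lets me rewrite $\sum_{a\in\calB}\pi_{s,a}A^\pi_{s,a}=-\sum_{a'\notin\calB}\pi_{s,a'}A^\pi_{s,a'}$ and $1-\sum_{a\in\calB}\pi_{s,a}=\sum_{a'\notin\calB}\pi_{s,a'}$. Substituting both and regrouping the outcome as a single sum over $a'\in\calA\setminus\calB$ matches the second summand term by term, which completes the proof. The whole argument is a direct computation; the one idea worth isolating is that $\sum_{a}\pi_{s,a}A^\pi_{s,a}=0$ is precisely what converts the $\calB$-sums into the complementary sums appearing in the lemma.
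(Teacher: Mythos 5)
Your proof is correct and follows essentially the same route as the paper's: solve for $\lambda_s$ from the normalization over the support set, expand $f_s(\eta_s)=\sum_{a\in\calB}(\pi_{s,a}+\eta_s A^\pi_{s,a}+\lambda_s)A^\pi_{s,a}$, and convert the $\calB$-sums into complementary sums via $\sum_{a\in\calA}\pi_{s,a}A^\pi_{s,a}=0$. If anything, your statement of the key identity (over all of $\calA$, not just $\calB_s(\eta_s)$) is more precise than the paper's own annotation of that step.
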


\kw{The proof of Lemma~\ref{Improvement expression} is deferred to Section~\ref{sec: Improvement expression}. Based on this lemma, we are able to derive a lower bound for $f_s(\eta_s)$, as stated in the next lemma whose proof is deferred to Section~\ref{sec: improvement lower bound}.}
\begin{theorem}[Improvement lower bound] Consider the  update in \eqref{policy update of PPG and PQD}. For any $\eta_s>0$ one has

\begin{align*}
    f_s(\eta_s) \geq \frac{\left(\max_a A^\pi_{s,a} \right)^2}{\max_a A^\pi_{s,a} + \frac{2+5|\mathcal{A}|}{\eta_s}}.
\end{align*}

\label{theorem:improvement lower bound for PPG}
\end{theorem}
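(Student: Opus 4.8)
The plan is to avoid expanding $f_s$ through the improvement expression of Lemma~\ref{Improvement expression} and instead to exploit directly the minimizing property of the projection. By Lemma~\ref{proj} together with the shift invariance in Remark~\ref{proj_property_1}, the updated policy is $\pi^+_s = \mathrm{Proj}_{\Delta(\calA)}(\pi_s + \eta_s A^\pi_s)$, i.e. the minimizer of $\tfrac12\|\cdot - (\pi_s + \eta_s A^\pi_s)\|_2^2$ over $\Delta(\calA)$. Comparing this minimizer against an arbitrary $p \in \Delta(\calA)$, expanding the squares and discarding the nonnegative term $\tfrac12\|\pi^+_s - \pi_s\|_2^2$, I would obtain
\[
\eta_s\langle A^\pi_s,\, \pi^+_s - \pi_s\rangle \;\ge\; \eta_s\langle A^\pi_s,\, p - \pi_s\rangle - \tfrac{1}{2}\|p-\pi_s\|_2^2 .
\]
Since $\langle A^\pi_s, \pi_s\rangle = \sum_a \pi_{s,a}A^\pi_{s,a} = 0$ (the expected advantage under the current policy vanishes), the left-hand side equals $\eta_s f_s$, which yields the variational lower bound $f_s \ge \langle A^\pi_s, p-\pi_s\rangle - \frac{1}{2\eta_s}\|p-\pi_s\|_2^2$, valid for every $p\in\Delta(\calA)$.

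Next I would specialize the comparison point. Fix a $\pi$-optimal action $\hat a \in \calA_s^\pi = \arg\max_a A^\pi_{s,a}$ and take $p = (1-t)\pi_s + t\,e_{\hat a}$ for a parameter $t\in[0,1]$, where $e_{\hat a}$ is the vertex of $\Delta(\calA)$ supported on $\hat a$. Writing $M := \max_a A^\pi_{s,a}$, one has $\langle A^\pi_s, p-\pi_s\rangle = t\,A^\pi_{s,\hat a} = tM$ (again using $\langle A^\pi_s,\pi_s\rangle = 0$) and $\|p-\pi_s\|_2^2 = t^2\|e_{\hat a}-\pi_s\|_2^2 \le 2t^2$. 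The variational bound then collapses to the scalar inequality $f_s \ge tM - t^2/\eta_s$, holding for all $t\in[0,1]$.

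Finally I would optimize over $t$. Choosing $t = \min\{1,\eta_s M/2\}$ splits the analysis into two regimes. If $\eta_s M \le 2$, then $t = \eta_s M/2$ yields $f_s \ge \eta_s M^2/4$, which dominates $\frac{M^2}{M+4/\eta_s}$ because $\tfrac{\eta_s}{4}\bigl(M + 4/\eta_s\bigr)\ge 1$; if $\eta_s M > 2$, then $t=1$ yields $f_s \ge M - 1/\eta_s \ge \frac{M^2}{M+4/\eta_s}$, the last step reducing after cross-multiplication to $3\eta_s M \ge 4$, which follows from $\eta_s M > 2$. In either case $f_s \ge \frac{M^2}{M + 4/\eta_s}$, and since $4 \le 2+5|\calA|$ this implies the stated bound a fortiori (indeed with the sharper constant $4$).

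The only delicate point along this route is the variational step, and above all the choice of the comparison $p$ that tilts the current policy toward a best action; everything afterward is one-variable calculus. I expect this to be the main obstacle, since it requires recognizing $\pi^+_s$ as a projection and invoking $\langle A^\pi_s,\pi_s\rangle=0$ rather than manipulating the explicit support sets. If one instead insists on arguing from Lemma~\ref{Improvement expression}, I would first rewrite the target equivalently as $\frac{2+5|\calA|}{\eta_s}\,f_s \ge M(M-f_s)$, then bound $M - f_s = \sum_{a\in\calB_s(\eta_s)}\pi^+_{s,a}(M-A^\pi_{s,a}) \le 2b/\eta_s$ via Lemma~\ref{lemma: advantage lower bound for support actions} with $b := \pi_s(\calA\setminus\calA_s^\pi)$; the hard part then becomes lower bounding $f_s$ by a multiple of $bM$ from the variance term in Lemma~\ref{Improvement expression} while controlling the possibly negative contribution of the actions outside $\calB_s(\eta_s)$, which forces a case analysis over the three support configurations of Lemma~\ref{three cases of support set} and is presumably where the looser constant $2+5|\calA|$ originates.
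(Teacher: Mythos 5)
Your proof is correct, and it takes a genuinely different route from the paper's. The paper argues from the \emph{explicit} form of the projection: it invokes the improvement expression (Lemma~\ref{Improvement expression}) to split $f_s(\eta_s)$ into a variance term over the support set $\calB_s(\eta_s)$ plus a contribution from the excluded actions, then combines the gap property (Lemma~\ref{proj_property_2}), Lemma~\ref{lemma: advantage lower bound for support actions} and the Cauchy--Schwarz inequality, and finally bounds the resulting denominator $G=G_1+G_2$ by $\max_a A^\pi_{s,a}+\frac{2+5|\calA|}{\eta_s}$ --- which is exactly where the $|\calA|$-dependent constant originates. You instead use only the variational (proximal) characterization of the projection: $\pi_s^+$ maximizes $\eta_s\langle A^\pi_s,y-\pi_s\rangle-\tfrac12\|y-\pi_s\|_2^2$ over $\Delta(\calA)$, so comparing against $p=(1-t)\pi_s+t\,e_{\hat a}$, using $\langle A^\pi_s,\pi_s\rangle=0$ and $\|e_{\hat a}-\pi_s\|_2^2\le 2$, gives $f_s\ge tM-t^2/\eta_s$ with $M:=\max_a A^\pi_{s,a}\ge 0$, and the choice $t=\min\{1,\eta_s M/2\}$ yields $f_s\ge M^2/(M+4/\eta_s)$; I verified both regimes ($\eta_s M\le 2$ and $\eta_s M>2$) and the cross-multiplications, and they are sound, so the stated bound follows a fortiori from $4\le 2+5|\calA|$. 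Your argument is shorter, avoids all support-set combinatorics, and gives a \emph{sharper, dimension-free} constant ($4$ in place of $2+5|\calA|$); propagated through the proof of Theorem~\ref{theorem:sublinear of PPG}, it would remove the $|\calA|$ factor from the $O(1/k)$ bound. Note also that this does not contradict the paper's remark that the optimization-based mirror-ascent framework is inapplicable to PPG: that remark concerns the global telescoping comparison against $\pi^*$, which the state-dependent step size $\eta_s=\eta d^\pi_\mu(s)/(1-\gamma)$ breaks, whereas your variational step is applied \emph{state-wise} and produces precisely the state-wise improvement bound that the paper's framework then feeds into the performance difference lemma. What the paper's longer route buys is the exact expression for $f_s$ and the support-set machinery (Lemmas~\ref{three cases of support set}--\ref{lemma: advantage lower bound for support actions}), which are reused elsewhere (finite-iteration convergence, equivalence to PI), but for this particular theorem your proof is both simpler and stronger.
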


With this lower bound, the sublinear convergence of PPG can be established together with the performance difference lemma.

\begin{theorem}[Sublinear convergence of PPG] With any constant step size $\eta_k = \eta$ and distribution $\rho \in \Delta(\calS)$, the policy sequence $\pi^k$ generated by PPG satisfies
\begin{align}\label{eq:sublinear}
    V^*(\rho) - V^k(\rho) \leq \frac{1}{k} \frac{1}{(1-\gamma)^2} \left\|\frac{d^*_\rho}{\rho}\right\|_{\infty} \left( 1 + \frac{2+5|\calA|}{\eta\tilde{\mu}} \right).
\end{align}
\label{theorem:sublinear of PPG}
\end{theorem}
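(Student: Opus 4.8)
The plan is to reduce \eqref{eq:sublinear} to a one-step quadratic-descent recursion for the suboptimality gap $g_k := V^*(\rho)-V^k(\rho)$ and then invert it. Throughout I write $M_s := \max_a A^k_{s,a}\ge 0$ (nonnegativity holds since $\sum_a \pi^k_{s,a}A^k_{s,a}=0$) and $C := \tfrac{2+5|\calA|}{\eta\tilde\mu}$. The goal is to establish an inequality of the form $g_k - g_{k+1}\ge \alpha\, g_k^2$ with $\alpha = \tfrac{(1-\gamma)^2}{\|d^*_\rho/\rho\|_\infty(\frac{1}{1-\gamma}+C)}$. Because the per-step improvement is nonnegative state-wise, $g_k$ will be seen to be nonincreasing, so dividing through and telescoping $\tfrac1{g_{k+1}}-\tfrac1{g_k}=\tfrac{g_k-g_{k+1}}{g_kg_{k+1}}\ge\tfrac{g_k-g_{k+1}}{g_k^2}\ge\alpha$ gives $\tfrac1{g_k}\ge\alpha k$, i.e. $g_k\le\tfrac{1}{\alpha k}$, which is exactly the rate in \eqref{eq:sublinear} (the step-size--free constant $1+C$ coming from $\sum_s d^*_\rho(s)=1$ together with the bound $M_s\le\tfrac{1}{1-\gamma}$ of Lemma~\ref{bounds of V Q A}).

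First I would handle the improvement side. For PPG, $\eta_s=\tfrac{\eta d^k_\mu(s)}{1-\gamma}\ge\eta\tilde\mu$ by \eqref{bound of d}, so Theorem~\ref{theorem:improvement lower bound for PPG} together with the monotonicity of $x\mapsto\tfrac{M_s^2}{M_s+x}$ yields the state-wise bound $f_s(\eta_s)=\sum_a\pi^{k+1}_{s,a}A^k_{s,a}\ge\tfrac{M_s^2}{M_s+C}$. Feeding this into the performance difference lemma (Lemma~\ref{performanceDifferenceLemma}) with $\pi_1=\pi^{k+1}$, $\pi_2=\pi^k$, and using $d^{k+1}_\rho(s)\ge(1-\gamma)\rho(s)$, gives
\[
V^{k+1}(\rho)-V^k(\rho)=\frac{1}{1-\gamma}\sum_s d^{k+1}_\rho(s)f_s(\eta_s)\ge\sum_s \rho(s)\frac{M_s^2}{M_s+C}.
\]
Since $f_s(\eta_s)\ge 0$ for every $s$, the same identity shows $V^{k+1}(\rho)\ge V^k(\rho)$, so $g_k$ is nonincreasing, as the recursion requires.

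Next I would bound the suboptimality by the same advantages. Applying Lemma~\ref{performanceDifferenceLemma} with $\pi_1=\pi^*$, $\pi_2=\pi^k$ and $\sum_a\pi^*(a|s)A^k_{s,a}\le M_s$ gives $(1-\gamma)g_k\le\sum_s d^*_\rho(s)M_s$. The crucial step is to link this $d^*_\rho$-weighted \emph{linear} sum to the $\rho$-weighted \emph{quadratic} sum above, which I would do by a weighted Cauchy--Schwarz split:
\[
(1-\gamma)g_k\le\sum_s d^*_\rho(s)M_s\le\left(\sum_s \frac{d^*_\rho(s)^2}{\rho(s)}(M_s+C)\right)^{1/2}\left(\sum_s \rho(s)\frac{M_s^2}{M_s+C}\right)^{1/2}.
\]
I then bound the first factor by $\|d^*_\rho/\rho\|_\infty\sum_s d^*_\rho(s)(M_s+C)\le\|d^*_\rho/\rho\|_\infty\left(\tfrac{1}{1-\gamma}+C\right)$, using $M_s\le\tfrac{1}{1-\gamma}$ (Lemma~\ref{bounds of V Q A}) and $\sum_s d^*_\rho(s)=1$. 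Squaring and substituting the improvement bound for the second factor produces $V^{k+1}(\rho)-V^k(\rho)\ge\alpha\, g_k^2$, closing the recursion.

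The main obstacle is precisely this change-of-measure link. Because the PPG step size carries the factor $d^k_\mu(s)$, the guaranteed per-step improvement is naturally weighted by the visitation measure, whereas the suboptimality is weighted by $d^*_\rho$; reconciling them forces the Cauchy--Schwarz splitting above, and performing it with the weights $\rho(s)$ inside (so that $d^*_\rho$ sums to one after extracting $\|d^*_\rho/\rho\|_\infty$) is exactly what keeps only a \emph{single} power of the distribution-mismatch coefficient $\|d^*_\rho/\rho\|_\infty$ in the rate. A cruder route --- bounding $\tfrac{M_s^2}{M_s+C}\ge\tfrac{(1-\gamma)M_s^2}{1+(1-\gamma)C}$ and then using Jensen's inequality $\sum_s\rho(s)M_s\le(\sum_s\rho(s)M_s^2)^{1/2}$ --- would instead leave $\|d^*_\rho/\rho\|_\infty^2$, which is why the weighted Cauchy--Schwarz is essential. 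Once the quadratic recursion is in hand, its inversion is routine and yields \eqref{eq:sublinear}.
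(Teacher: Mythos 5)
Your overall architecture follows the paper's proof: the state-wise improvement bound of Theorem~\ref{theorem:improvement lower bound for PPG} combined with $\eta_s\geq\eta\tilde{\mu}$, the performance difference lemma applied on both the improvement side ($\pi^{k+1}$ vs.\ $\pi^k$) and the suboptimality side ($\pi^*$ vs.\ $\pi^k$), a quadratic recursion $g_k-g_{k+1}\geq\alpha g_k^2$, and the standard inversion. Your one real departure, the weighted Cauchy--Schwarz split, is in fact equivalent to the paper's mechanism: before any crude bounding, your inequality reads $\bigl(\mathbb{E}_{s\sim d^*_\rho}[M_s]\bigr)^2\leq\|d^*_\rho/\rho\|_\infty\bigl(\mathbb{E}_{s\sim d^*_\rho}[M_s]+C\bigr)\sum_s\rho(s)\tfrac{M_s^2}{M_s+C}$, which is precisely the statement $\mathbb{E}_{s\sim\rho}[g(M_s)]\geq\|d^*_\rho/\rho\|_\infty^{-1}\,g\bigl(\mathbb{E}_{s\sim d^*_\rho}[M_s]\bigr)$ that the paper obtains by a change of measure followed by Jensen's inequality for the convex increasing function $g(x)=\tfrac{x^2}{x+C}$. (Incidentally, this also shows the Cauchy--Schwarz split is not ``essential'' for keeping a single power of $\|d^*_\rho/\rho\|_\infty$: the Jensen route achieves the same.)

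There is, however, a genuine gap in the execution: your proof as written does not establish \eqref{eq:sublinear}. When you bound the first Cauchy--Schwarz factor by $\|d^*_\rho/\rho\|_\infty\bigl(\tfrac{1}{1-\gamma}+C\bigr)$ via $M_s\leq\tfrac{1}{1-\gamma}$, you obtain $\alpha=\tfrac{(1-\gamma)^2}{\|d^*_\rho/\rho\|_\infty(\frac{1}{1-\gamma}+C)}$ and hence $g_k\leq\tfrac{1}{k}\tfrac{\|d^*_\rho/\rho\|_\infty}{(1-\gamma)^2}\bigl(\tfrac{1}{1-\gamma}+C\bigr)$, whose leading additive term is $\tfrac{1}{(1-\gamma)^3}$ rather than the theorem's $\tfrac{1}{(1-\gamma)^2}$; since $\tfrac{1}{1-\gamma}>1$, this is strictly weaker than the stated constant $1+C$, so your claim that $1/(\alpha k)$ is ``exactly the rate in \eqref{eq:sublinear}'' is false. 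The fix is to postpone the crude bound: set $x:=\mathbb{E}_{s\sim d^*_\rho}[M_s]$ and keep the recursion as $g_k-g_{k+1}\geq\|d^*_\rho/\rho\|_\infty^{-1}\tfrac{x^2}{x+C}$; since $x\mapsto\tfrac{x^2}{x+C}$ is increasing on $x\geq 0$ and $x\geq(1-\gamma)g_k$, this yields $g_k-g_{k+1}\geq\|d^*_\rho/\rho\|_\infty^{-1}\tfrac{(1-\gamma)^2 g_k^2}{(1-\gamma)g_k+C}$, and only now apply $(1-\gamma)g_k\leq 1$ (Lemma~\ref{bounds of V Q A}) to the denominator to get $1+C$. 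This last step--monotonicity of $g$ plus $\delta_k\leq\tfrac{1}{1-\gamma}$--is exactly how the paper closes the argument, and with this single change your derivation becomes a correct and essentially equivalent proof of the theorem.
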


\begin{remark} Compared with the previous results  in \textup{\cite{Xiao_2022,Agarwal_Kakade_Lee_Mahajan_2019,Zhang_Koppel_Bedi_Szepesvari_Wang_2020}}, the result in \eqref{eq:sublinear} removes the constraint $\eta_k\le \frac{1}{L}$  on the step size,  where $L = \frac{2\gamma \left| \mathcal{A} \right|}{\left( 1-\gamma \right) ^3}$ is the smoothness coefficient of the value function. The best sublinear convergence rate for PPG in prior works is achieved when $\eta_k = \frac{1}{L}$, leading to the result
\begin{align}
V^*\left( \mu \right) -V^{k}\left( \mu \right) \le \mathcal{O} \left( \frac{\left| \mathcal{S} \right|\left| \mathcal{A} \right|}{\left( 1-\gamma \right) ^5k}\left\| \frac{d_{\mu}^{*}}{\mu} \right\| _{\infty}^{2} \right).
\label{PPG_result_2}
\end{align}
By setting $\eta_k = \frac{1}{L} = \frac{\left( 1-\gamma \right) ^3}{2\gamma \left| \mathcal{A} \right|}$ and $\rho = \mu$ in \eqref{eq:sublinear} we can obtain the bound 
\begin{align*}
V^*\left( \mu \right) -V^k\left( \mu \right) &\le \frac{1}{k}\frac{1}{\left( 1-\gamma \right) ^2}\left\| \frac{d_{\mu}^{*}}{\mu} \right\| _{\infty}\left( 1+\frac{2\gamma \left| \mathcal{A} \right|\left( 2+5\left| \mathcal{A} \right| \right)}{\left( 1-\gamma \right) ^3\tilde{\mu}} \right)\\
&=\mathcal{O} \left( \frac{1}{k}\frac{\left| \mathcal{A} \right|^2}{\left( 1-\gamma \right) ^5}\left\| \frac{d_{\mu}^{*}}{\mu} \right\| _{\infty}\frac{1}{\tilde{\mu}} \right).
\end{align*}
Compared with \eqref{PPG_result_2}, the new bound has the same dependency on the discounted factor. In addition, the new bound \kw{is proportional to} $\left| \mathcal{A} \right|^2$ instead of $|\calS| |\calA|$, which is better in the case when $|\calS| > |\calA|$. 
Moreover,  Theorem~\ref{theorem:sublinear of PPG} suggests  that the best sublinear convergence rate for PPG is indeed achieved when  $\eta_k=\eta \ge \frac{2 + 5|\calA|}{\tilde{\mu}}$ rather than $\eta_k=\frac{1}{L}$, yielding the rate
$$
\mathcal{O} \left( \frac{1}{k}\frac{1}{(1-\gamma )^2}\left\| \frac{d_{\rho}^{*}}{\rho} \right\| _{\infty} \right).
$$

\end{remark}

\begin{remark}
 Our analysis technique is also available for the establishment of the sublinear convergence of PQA. However, the result is not as tight as the one obtained in \textup{\cite{Xiao_2022, Lan_2021}}  based on the particular structure of PQA within the framework of policy mirror ascent. Thus, we omit the details.  It is worth emphasizing again that, to the best of our knowledge, the analysis technique in \textup{\cite{Xiao_2022, Lan_2021}} for PQA is not applicable for PPG due the existence of the visitation measure. 
\end{remark}

\begin{proof}[Proof of Theorem~\ref{theorem:sublinear of PPG}]
By the performance difference lemma (Lemma \ref{performanceDifferenceLemma}) and Theorem \ref{theorem:improvement lower bound for PPG}, one has
\begin{align*}
&V^{k+1}\left( \rho \right) -V^k\left( \rho \right) =\frac{1}{1-\gamma}\sum_{s\in \mathcal{S}}{d_{\rho}^{k+1}\left( s \right) \sum_{a\in \mathcal{A}}{\pi _{s,a}^{k+1}A_{s,a}^{k}}} \ge
\mathbb{E} _{s\sim \rho}\left[ \frac{\left( \underset{a\in \mathcal{A}}{\max}\,\,A_{s,a}^{k} \right) ^2}{\underset{a\in \mathcal{A}}{\max}\,\,A_{s,a}^{k}+\frac{2+5\left| \mathcal{A} \right|}{\eta^k _{s}}} \right] 
\\               
\,\,                     &\overset{\left( a \right)}
\ge \mathbb{E} _{s\sim \rho}\left[ \frac{\left( \underset{a\in \mathcal{A}}{\max}\,\,A_{s,a}^{k} \right) ^2}{\underset{a\in \mathcal{A}}{\max}\,\,A_{s,a}^{k}+\frac{2+5\left| \mathcal{A} \right|}{\eta\tilde{\mu}}} \right] 
= 
\mathbb{E} _{s\sim \rho}\left[g\left( \underset{a\in \mathcal{A}}{\max}\,\,A_{s,a}^{k} \right) \right]
\\               
                        &\ge 
\left\| \frac{d_{\rho}^{*}}{\rho} \right\| _{\infty}^{-1}\mathbb{E} _{s\sim d_{\rho}^{*}}\left[ g\left( \underset{a\in \mathcal{A}}{\max}\,\,A_{s,a}^{k} \right) \right] 
\\
&\overset{\left( b \right)}
\ge \left\| \frac{d_{\rho}^{*}}{\rho} \right\| _{\infty}^{-1}\cdot g\left( \mathbb{E} _{s\sim d_{\rho}^{*}}\left[ \underset{a\in \mathcal{A}}{\max}\,\,A_{s,a}^{k} \right] \right),
\numberthis
\label{lower_bound_of_error_difference}
\end{align*}
where $g\left( x \right) :=\frac{x^2}{x+\frac{2+5|\calA|}{\eta\tilde{\mu}}}$ is a convex and monotonically increasing function when $x \ge 0$, (a) is due to $\eta_s^k = \frac{\eta d^k_\mu(s)}{1-\gamma} \geq \eta\tilde{\mu}$ according to inequality \eqref{bound of d}, and (b) is due to the Jensen Inequality. Notice that 
\begin{align*}
\mathbb{E} _{s\sim d_{\rho}^{*}}\left[ \underset{a\in \mathcal{A}}{\max}\,\,A_{s,a}^{k} \right] &\geq \mathbb{E} _{s\sim d_{\rho}^{*}}\left[ \mathbb{E} _{a\sim \pi _{s}^{*}}\left[ A_{s,a}^{k} \right] \right] =\left( 1-\gamma \right) \left[ \frac{1}{1-\gamma} \mathbb{E} _{s\sim d_{\rho}^{*}}\left[ \mathbb{E} _{a\sim \pi _{s}^{*}}\left[ A_{s,a}^{k} \right] \right] \right] \\
&=(1-\gamma) \left( V^*(\rho) - V^k(\rho)  \right). \numberthis \label{expectation_of_max_A}
\end{align*}    
Let $\delta_k:= V^*(\rho) - V^k(\rho)$. As $g$ is monotonically increasing, plugging \eqref{lower_bound_of_error_difference} into \eqref{expectation_of_max_A} yields that
\begin{align*}
\delta_k - \delta_{k+1}&\ge \left\| \frac{d_{\rho}^{*}}{\rho} \right\| _{\infty}^{-1}\cdot g\left( \left( 1-\gamma \right) \delta_k \right) 
=\left\| \frac{d_{\rho}^{*}}{\rho} \right\| _{\infty}^{-1}\cdot \frac{\left( 1-\gamma \right) ^2\delta _{k}^{2}}{\left( 1-\gamma \right) \delta _k+\frac{2+5\left| \mathcal{A} \right|}{\eta\tilde{\mu}}}. \numberthis \label{delta bound}
\end{align*}
Since $\delta_k \leq \frac{1}{1-\gamma}$ by Lemma~\ref{bounds of V Q A}, we have
\begin{align*}
    \delta_k - \delta_{k+1}&\ge \left\| \frac{d_{\rho}^{*}}{\rho} \right\| _{\infty}^{-1}\cdot \frac{\left( 1-\gamma \right) ^2\delta _{k}^{2}}{1+\frac{2+5\left| \mathcal{A} \right|}{\eta \tilde{\mu}}}.
\end{align*}
This inequality  implies that $\delta_k$ is monotonically decreasing. \kw{Dividing  both sides by $\delta_k^2$} yields
\begin{align*}
    \frac{1}{\delta_{k+1}} - \frac{1}{\delta_k} = \frac{\delta_{k}-\delta_{k+1}}{\delta_k \delta_{k+1}} \geq \frac{\delta_k - \delta_{k+1}}{\delta_k^2} \geq \left\| \frac{d_{\rho}^{*}}{\rho} \right\| _{\infty}^{-1}\cdot \frac{\left( 1-\gamma \right) ^2}{1+\frac{2+5\left| \mathcal{A} \right|}{\eta \tilde{\mu}}}.
\end{align*}
Consequently,
\begin{align*}
    \delta_k = \frac{1}{\frac{1}{\delta_k}} = \frac{1}{\delta_0 + \sum_{i=0}^{k-1}\left( \frac{1}{\delta_{k+1}} - \frac{1}{\delta_k} \right)}
    &\leq \frac{1}{\sum_{i=0}^{k-1}\left( \frac{1}{\delta_{k+1}} - \frac{1}{\delta_k} \right)} \\
    &\leq \frac{1}{k} \left\| \frac{d_{\rho}^{*}}{\rho} \right\| _{\infty} \left( \frac{1}{(1-\gamma)^2} + \frac{2+5|\calA|}{\eta\tilde{\mu}(1-\gamma)^2} \right),
\end{align*}
and the proof is complete.

\end{proof}

\subsection{Proof of Lemma \ref{Improvement expression}}\label{sec: Improvement expression}

Noting that
\begin{align*}
     1 = \sum_{a\in\calA}\pi_{s,a} &= \sum_{a\in\calB_s(\eta_s)} \pi^+_{s,a}(\eta_s) = \sum_{a\in\calB_s(\eta_s)} \left[ \pi_{s,a} + \eta_s A^\pi_{s,a} + \lambda_s(\eta_s) \right]_+ \\
     &= \sum_{a\in\calB_s(\eta_s)} \left[ \pi_{s,a} + \eta_s A^\pi_{s,a} + \lambda_s(\eta_s) \right],
\end{align*}
we have 
\begin{align*}
    \lambda_s(\eta_s) &= \frac{1}{|\calB_s(\eta_s)|} \left( 1 - \sum_{a\in\calB_s(\eta_s)} \left[ \pi_{s,a} + \eta_s A^\pi_{s,a} \right] \right)  \\
    &= \frac{1}{|\calB_s(\eta_s)|} \left( \sum_{a \in \calA \setminus \calB_s(\eta_s)} \pi_{s,a} - \eta_s \sum_{a\in\calB_s(\eta_s)}A^\pi_{s,a} \right). \numberthis \label{lambda expression}
\end{align*}
It follows that
\begin{align*}
    f_s(\eta_s) &= \sum_{a\in\calB_s(\eta_s)} \pi^+_{s,a}(\eta_s) A^\pi_{s,a} = \sum_{a\in\calB_s(\eta_s)} \left[ \pi_{s,a} + \eta_s A^\pi_{s,a} + \lambda_s(\eta_s) \right] A^\pi_{s,a} \\
    &\stackrel{(a)}{=} \lambda_s(\eta_s)\sum_{a\in\calB_s(\eta_s)} A^\pi_{s,a} + \eta_s \sum_{a\in\calB_s(\eta_s)} \left(A^\pi_{s,a}\right)^2 - \sum_{a^\prime\in\calA \setminus \calB_s(\eta_s)} \pi_{s,a}A^\pi_{s,a} \\
    &= \eta_s \left( \sum_{a\in\calB_s(\eta_s)} (A^\pi_{s,a})^2 - \frac{1}{|\calB_s(\eta_s)|} \left( \sum_{a\in\calB_s(\eta_s)} A^\pi_{s,a} \right)^2 \right) \\
    &+ \sum_{a^\prime\in\calA\setminus\calB_s(\eta_s)} \pi_{s,a^\prime} \left( \frac{1}{|\calB_s(\eta_s)|} \sum_{a\in\calB_s(\eta_s)} (A^\pi_{s,a} - A^\pi_{s,a^\prime}) \right),
\end{align*}
where $(a)$ utilizes the fact $\sum_{a\in\calB_s(\eta_s)} \pi_{s,a}A^\pi_{s,a} = 0$.

\subsection{Proof of Theorem \ref{theorem:improvement lower bound for PPG}}\label{sec: improvement lower bound}

Without loss of generality, we only consider the case $\calB_s(\eta_s)\setminus\calA_s^\pi\neq \emptyset$. First recall the expression of $f_s(\eta_s)$ in Lemma~\ref{Improvement expression}:
\begin{align*}
    f_s(\eta_s) &= \eta_s \left( \underset{I_1}{\underbrace{\sum_{a\in\calB_s(\eta_s)} (A^\pi_{s,a})^2 - \frac{1}{|\calB_s(\eta_s)|} \left( \sum_{a\in\calB_s(\eta_s)} A^\pi_{s,a} \right)^2}} \right) \\
    &+ \underset{I_2}{\underbrace{\sum_{a^\prime\in\calA\setminus\calB_s(\eta_s)} \pi_{s,a^\prime} \left( \frac{1}{|\calB_s(\eta_s)|} \sum_{a\in\calB_s(\eta_s)} (A^\pi_{s,a} - A^\pi_{s,a^\prime}) \right)}}.
\end{align*}
For the term $I_1$, it is \kw{evident that}
\begin{align*}
    I_1 &= |\calB_s(\eta_s)| \left( \mathbb{E}_{a\sim U} \left[ \left( A^\pi_{s,a} \right)^2 \right] - \left( \mathbb{E}_{a\sim U}\left[ A^\pi_{s,a} \right] \right)^2 \right) = |\calB_s(\eta_s)| \cdot \Var_{a\sim U} \left[ A^\pi_{s,a} \right],
\end{align*}
where $U$ denotes the uniform distribution on $\calB_s(\eta_s)$. Letting $\Delta_{s,a}^\pi := \underset{a^\prime\in\calA}{\max} \,A^\pi_{s,a^\prime} - A^\pi_{s,a}$,
\begin{align*}
    I_1 &= |\calB_{s}(\eta_s)| \cdot \mathrm{Var}_{a\sim U} \left[ A^\pi_{s,a} \right]= |\calB_{s}(\eta_s)| \cdot \mathrm{Var}_{a\sim U} \left[ \Delta^\pi_{s,a} \right] \\
    &= \sum_{a\in\calB_s(\eta_s)} (\Delta_{s,a}^\pi)^2 - \frac{1}{|\calB_s(\eta_s)|} \left( \sum_{a\in\calB_s(\eta_s)} \Delta_{s,a}^\pi \right) ^2 \\
    &\stackrel{(a)}{=}\sum_{a\in\calB_s(\eta_s)}(\Delta_{s,a}^\pi)^2 - \frac{1}{|\calB_s(\eta_s)|} \left( \sum_{\tilde{a}\in\calB_s(\eta_s) \setminus \calA_s^\pi} \Delta_{s,\tilde{a}}^\pi \right)^2 \\
    \\
    &\geq \sum_{a\in\calB_s(\eta_s)}(\Delta_{s,a}^\pi)^2 - \frac{|\calB_s(\eta_s) \setminus \calA^\pi_s|}{|\calB_{s}(\eta_s)|} \sum_{\tilde{a}\in\calB_s(\eta_s) \setminus \calA_s^\pi} (\Delta_{s,\tilde{a}}^\pi)^2 \
    \\
    \\
    &= \left( 1 - \frac{|\calB_s(\eta_s) \setminus \calA^\pi_s|}{|\calB_{s}(\eta_s)|} \right) \sum_{a\in\calB_s(\eta_s)}(\Delta_{s,a}^\pi)^2= \frac{|\calB_s(\eta_s) \cap A^\pi_s|}{|\calB_s(\eta_s)|} \sum_{a\in\calB_s(\eta_s)}\left(\Delta_{s,a}^\pi\right)^2 \\
    &\geq \frac{1}{|\calB_s(\eta_s)|} \sum_{a\in\calB_s(\eta_s)}\left(\Delta_{s,a}^\pi\right)^2,
\end{align*}
where $(a)$ leverages the property that $\Delta_{s,a}^\pi = 0$ for $\pi$-optimal actions $a \in \calA_s^\pi$. 

For the term $I_2$, we can rewrite it through the notation $\Delta^\pi$ as follows:
\begin{align*}
    I_2 = \sum_{a^\prime \in \calA \setminus \calB_s(\eta_s)} \pi_{s,a^\prime} \left(\Delta_{s,a^\prime}^\pi - \bar{\Delta}_s^\pi\right),
\end{align*}
where $\bar{\Delta}_s^\pi := \frac{1}{|\calB_s(\eta_s)|} \sum_{a\in\calB_s(\eta_s)} \Delta^\pi_{s,a}$.
By lemma \ref{proj_property_2}, for any action $a^\prime \notin \calB_s(\eta)$ we have
\begin{align*}
    &\sum_{a\in\calB_s(\eta)}\hspace{-0.2cm} \left[\pi_{s,a} + \eta A^\pi_{s,a} - (\pi_{s,a^\prime} + \eta A^\pi_{s,a^\prime})\right]
    \geq \hspace{-.3cm}\sum_{a\in\calB_s(\eta)} \hspace{-.2cm}\left[\pi_{s,a} + \eta A^\pi_{s,a} - \max_{a\prime\not\in\calB_s(\eta)}(\pi_{s,a^\prime} + \eta A^\pi_{s,a^\prime})\right] \\
    &\stackrel{(a)}{=} \sum_{a\in\calB_s(\eta)} \left[\pi_{s,a} + \eta A^\pi_{s,a} - \max_{a\prime\not\in\calB_s(\eta)}(\pi_{s,a^\prime} + \eta A^\pi_{s,a^\prime})\right]_+ \geq 1,
\end{align*}
where $(a)$ is due to $a\in \calB_s(\eta)$ and $a^\prime \not\in \calB_s(\eta)$, implying $\pi_{s,a} + \eta A^\pi_{s,a} > \pi_{s,a^\prime} + \eta A^\pi_{s,a^\prime}$. It follows that
\begin{align*}
1&\le \sum_{a\in\calB_s(\eta)} \left[ \pi_{s,a} - \pi_{s,a^\prime} + \eta (A^\pi_{s,a} - A^\pi_{s,a^\prime}) \right] \\
&= \eta \sum_{a\in\calB_s(\eta)} (A^\pi_{s,a} - A^\pi_{s,a^\prime}) - |\calB_s(\eta)| \pi_{s,a^\prime} + \sum_{a\in\calB_s(\eta)} \pi_{s,a},
\end{align*}
which yields 
\begin{align*}
\frac{1}{\left| \calB _s\left( \eta \right) \right|}\sum_{a\in \calB _s\left( \eta \right)} \left( {A_{s,a}^{\pi}}-A_{s,a^\prime}^{\pi} \right) \ge \frac{1}{\eta\left| \calB _s\left( \eta \right) \right|}\left( 1-\sum_{a\in \calB _s\left( \eta \right)}{\pi _{s,a}}+\left| \calB _s\left( \eta \right) \right|\pi _{s,a^\prime} \right) \geq\frac{\pi _{s,a^\prime}}{\eta}.
\end{align*} 
Using the notation of $\Delta^\pi_{s,a}$ and $\bar{\Delta}^\pi_s$, this inequality can be reformulated as
\begin{align}
    \forall a^\prime \in \calA\setminus\calB_s(\eta_s): \quad \Delta_{s,a^\prime}^\pi - \bar{\Delta}_s^\pi \ge \frac{\pi_{s,a^\prime}}{\eta_s}.
    \label{ieq: 1}
\end{align}
Let $\calB_s(0) := \mathrm{supp}(\pi_s) = \left\{ a: \pi_{s,a} > 0 \right\}$. By \eqref{ieq: 1} we know that
\begin{align}
    \forall a^\prime \in \left(\calA \setminus \calB_s(\eta_s)\right) \cap \calB_s(0): \quad \Delta_{s,a^\prime}^\pi - \bar{\Delta}_s^\pi \ge \frac{\pi_{s,a^\prime}}{\eta_s} > 0.
    \label{ieq: 1 argument}
\end{align}
Furthermore, 
\begin{align*}
    I_2 = \sum_{a^\prime \in \calA \setminus \calB_s(\eta_s)} \pi_{s,a^\prime} \left(\Delta_{s,a^\prime}^\pi - \bar{\Delta}_s^\pi\right) = \sum_{a^\prime \in \left(\calA \setminus \calB_s(\eta_s)\right) \cap \calB_s(0)} \pi_{s,a^\prime}\left(\Delta_{s,a^\prime}^\pi - \bar{\Delta}_s^\pi\right).
\end{align*}

Combining $I_1$ and $I_2$ together, we have
\begin{align*}
    f_s(\eta_s) \geq \frac{\eta_s}{|\calB_s(\eta_s)|} \sum_{a\in\calB_s(\eta_s)} \left( \Delta_{s,a}^\pi \right)^2 + \sum_{a^\prime \in \left(\calA \setminus \calB_s(\eta_s)\right) \cap \calB_s(0)} \pi_{s,a^\prime} (\Delta_{s,a^\prime}^\pi - \bar{\Delta}_s^\pi).
\end{align*}
By Cauchy-Schwarz Inequality, 
\begin{align*}
&f_s \left( \eta_s \right) \times  \left( \frac{\left| \mathcal{B} _s\left( \eta_s \right) \right|}{\eta_s}\sum_{a\in \mathcal{B} _s\left( \eta_s \right)}{\left( \pi _{s,a} \right) ^2}+\sum_{a^\prime\in \left(\calA \setminus \calB_s(\eta_s)\right) \cap \calB_s(0)}{\pi _{s,a^\prime}}\frac{\left( \Delta _{s,a^\prime}^{\pi} \right) ^2}{\Delta _{s,a^\prime}^{\pi}-\bar{\Delta}_{s}^{\pi}} \right) \\
&\ge \left( \frac{\eta_s}{\left| \mathcal{B} _s\left( \eta_s \right) \right|}\sum_{a\in \mathcal{B} _s\left( \eta_s \right)}{\left( \Delta _{s,a}^{\pi} \right) ^2}+\sum_{a^\prime\in \left(\calA \setminus \calB_s(\eta_s)\right) \cap \calB_s(0)}{\pi _{s,a^\prime}}\left( \Delta _{s,a^\prime}^{\pi}-\bar{\Delta}_{s}^{\pi} \right) \right)     \\    &\times \left( \frac{\left| \mathcal{B} _s\left( \eta_s \right) \right|}{\eta_s}\sum_{a\in \mathcal{B} _s\left( \eta_s \right)}{\left( \pi _{s,a} \right) ^2}+\sum_{a^\prime\in \left(\calA \setminus \calB_s(\eta_s)\right) \cap \calB_s(0)}{\pi _{s,a^\prime}}\frac{\left( \Delta _{s,a^\prime}^{\pi} \right) ^2}{\Delta _{s,a^\prime}^{\pi}-\bar{\Delta}_{s}^{\pi}} \right) 
\\
\,\,                                                                        &\ge \left( \sum_{a\in \mathcal{B} _s\left( \eta_s \right)}{\pi _{s,a}\Delta _{s,a}^{\pi}}+\sum_{a^\prime\in \left(\calA \setminus \calB_s(\eta_s)\right) \cap \calB_s(0)}{\pi _{s,a^\prime}}\Delta _{s,a^\prime}^{\pi} \right) ^2
{=}\left( \underset{a\in \mathcal{A}}{\max}\,\,A_{s,a}^{\pi} \right) ^2.
\end{align*}
Therefore, we can obtain $$f_s(\eta_s) \geq \frac{1}{G}\left( \underset{a\in\calA}{\max}\calA^\pi_{s,a} \right)^2,$$ where
\begin{align*}
G:=\underset{G_1}{\underbrace{\frac{\left| \mathcal{B} _s\left( \eta_s \right) \right|}{\eta_s}\sum_{a\in \mathcal{B} _s\left( \eta_s \right)}{\left( \pi _{s,a} \right) ^2}}}+\underset{G_2}{\underbrace{\sum_{a\in \left(\calA \setminus \calB_s(\eta_s)\right) \cap \calB_s(0)}{\pi _{s,a}}\frac{\left( \Delta _{s,a}^{\pi} \right) ^2}{\Delta _{s,a}^{\pi}-\bar{\Delta}_{s}^{\pi}}}}.
\end{align*}
Next we will give an upper bound of $G$. For the term $G_1$, \kw{it is straightforward to see that}
$$
G_1<\frac{\left| \mathcal{B} _s\left( \eta_s \right) \right|}{\eta_s}\left( \sum_{a\in \mathcal{B} _s\left( \eta_s \right)}{\pi _{s,a}} \right) ^2<\frac{\left| \mathcal{A} \right|}{\eta_s}.$$
For the term $G_2$, a direct computation yields that,
\begin{align*}
G_2&=\sum_{a\in \left(\calA \setminus \calB_s(\eta_s)\right) \cap \calB_s(0)}{\pi _{s,a}}\frac{\left( \Delta _{s,a}^{\pi} \right) ^2}{\Delta _{s,a}^{\pi}-\bar{\Delta}_{s}^{\pi}}
\\
\,\,   &=\sum_{a\in \left(\calA \setminus \calB_s(\eta_s)\right) \cap \calB_s(0)}{\pi _{s,a}}\frac{\left( \Delta _{s,a}^{\pi} \right) ^2-\left( \bar{\Delta}_{s}^{\pi} \right) ^2+\left( \bar{\Delta}_{s}^{\pi} \right) ^2}{\Delta _{s,a}^{\pi}-\bar{\Delta}_{s}^{\pi}}
\\
\,\,   &=\sum_{a\in \left(\calA \setminus \calB_s(\eta_s)\right) \cap \calB_s(0)}{\pi _{s,a}\left( \Delta _{s,a}^{\pi}+\bar{\Delta}_{s}^{\pi}+\frac{\left( \bar{\Delta}_{s}^{\pi} \right) ^2}{\Delta _{s,a}^{\pi}-\bar{\Delta}_{s}^{\pi}} \right)}
\\
\,\,   &\le\sum_{a\in \mathcal{A}}{\pi _{s,a}\Delta _{s,a}^{\pi}}+\bar{\Delta}_{s}^{\pi}\sum_{a\in \left(\calA \setminus \calB_s(\eta_s)\right) \cap \calB_s(0)}{\pi _{s,a}}+\left( \bar{\Delta}_{s}^{\pi} \right) ^2\sum_{a\in \left(\calA \setminus \calB_s(\eta_s)\right) \cap \calB_s(0)}{\frac{\pi _{s,a}}{\Delta _{s,a}^{\pi}-\bar{\Delta}_{s}^{\pi}}}
\\
\,\,   &=\underset{a\in \mathcal{A}}{\max}\,\,A_{s,a}^{\pi}+\bar{\Delta}_{s}^{\pi}\sum_{a\in \left(\calA \setminus \calB_s(\eta_s)\right) \cap \calB_s(0)}{\pi _{s,a}}+\left( \bar{\Delta}_{s}^{\pi} \right) ^2\sum_{a\in \left(\calA \setminus \calB_s(\eta_s)\right) \cap \calB_s(0)}{\frac{\pi _{s,a}}{\Delta _{s,a}^{\pi}-\bar{\Delta}_{s}^{\pi}}}.
\numberthis
\label{upper_bound_of_G2}
\end{align*}
Lemma \ref{lemma: advantage lower bound for support actions} shows that
\begin{align}
    \forall a\in\calB_s(\eta_s): \Delta_{s,a}^\pi \leq \frac{2\pi_s(\calA \setminus \calA_s^\pi)}{\eta_s} \leq \frac{2}{\eta_s} \quad
    \Longrightarrow \quad \bar{\Delta}_s^\pi \leq \frac{2}{\eta_s}.
    \label{ieq: 2}
\end{align}
Plugging \eqref{ieq: 1 argument} and \eqref{ieq: 2} into \eqref{upper_bound_of_G2} we have
\begin{align*}
G_2&\le \underset{a\in \mathcal{A}}{\max}\,\,A_{s,a}^{\pi}+\frac{2}{\eta_s}\sum_{a\in \left(\calA \setminus \calB_s(\eta_s)\right) \cap \calB_s(0)}{\pi _{s,a}}+\left( \frac{2}{\eta_s} \right) ^2\sum_{a\in \left(\calA \setminus \calB_s(\eta_s)\right) \cap \calB_s(0)}{\frac{\pi _{s,a}}{\frac{1}{\eta_s}\pi _{s,a}}}
\\
\,\, &=\underset{a\in \mathcal{A}}{\max}\,\,A_{s,a}^{\pi}+\frac{2}{\eta_s}\sum_{a\in \left(\calA \setminus \calB_s(\eta_s)\right) \cap \calB_s(0)}{\pi _{s,a}}+\frac{4}{\eta_s}\sum_{a\in \left(\calA \setminus \calB_s(\eta_s)\right) \cap \calB_s(0)}{1}
\\
\,\, &\le \underset{a\in \mathcal{A}}{\max}\,\,A_{s,a}^{\pi}+\frac{2+4\left| \mathcal{A} \right|}{\eta_s}.
\end{align*}
Thus we can finally obtain
$$
G=G_1+G_2\le \underset{a\in \mathcal{A}}{\max}\,\,A_{s,a}^{\pi}+\frac{2+5\left| \mathcal{A} \right|}{\eta_s},
$$
and 
\begin{align*}
    f_s(\eta_s) \geq \frac{\left(\max_a A^\pi_{s,a} \right)^2}{G} \geq \frac{\left(\max_a A^\pi_{s,a} \right)^2}{\max_a A^\pi_{s,a} + \frac{2+5|\mathcal{A}|}{\eta_s}}. \numberthis
    \label{lower bound of f}
\end{align*}


\section{Finite iteration convergence results}
\label{Finite time convergence results}

\subsection{Finite iteration convergence of PPG and PQA}
\label{Finite time Convergence of PPG and PQD with constant step size}

In this section, we show that both PPG and PQA output an optimal policy after a finite iteration $k_0$ and we will use the sublinear analysis (Theorem \ref{theorem:sublinear of PPG} for PPG and \eqref{xiao's results for PQA} for PQA) to derive an upper bound of $k_0$.
\kw{The overall idea is first sketched as follows.}
For an arbitrary $s \in \mathcal{S}$, letting $\mathcal{B} = \mathcal{A}^*_s$, $\mathcal{C} = \calA \setminus \mathcal{A}^*_s$ in Lemma \ref{proj_property_2} (recall that $\calA^*_s$ is the set of optimal actions, i.e. $\pi^*$-optimal actions), we have
\begin{align}
\forall a^{\prime}\notin \mathcal{A} _{s}^{*}, \;\; \pi _{s,a^{\prime}}^{+}=0 \;\Longleftrightarrow\; \sum_{a\in \mathcal{A} _{s}^{*}}{\left( \pi _{s,a}+\eta _sA_{s,a}^{\pi}-\underset{a^{\prime}\notin \mathcal{A} _{s}^{*}}{\max}\left( \pi _{s,a^{\prime}}+\eta _sA_{s,a^{\prime}}^{\pi} \right) \right) _+}\ge 1.
\label{finite_time_convergence_intuition_formualtion}
\end{align}
By the definition of $b^\pi_s$ and $\Delta$,
\begin{align*}
    b_{s}^{\pi}=\pi _s\left( \mathcal{A} \setminus \mathcal{A} _{s}^{*} \right), \quad \quad \Delta =\underset{s\in \tilde{S},a \notin \mathcal{A} _{s}^{*}}{\min}\left| A^{*}(s,a) \right|,
\end{align*}
when $V^\pi$ is sufficiently close to $V^*$, we know that for any $a^\prime \notin \calA^*_s$,
$$
\sum_{a\in \mathcal{A} _{s}^{*}}{\pi _{s,a}}\approx 1, \;\; A_{s,a}^{\pi}-A_{s,a^{\prime}}^{\pi}\approx A_{s,a}^{*}-A_{s,a^\prime}^{*}\ge \Delta. 
$$
Since $\pi_{s,a'}\le b_s^\pi$,  we asymptotically have
$$
\sum_{a\in \mathcal{A} _{s}^{*}}{\left( \pi _{s,a}+\eta _sA_{s,a}^{\pi}-\underset{a^{\prime}\notin \mathcal{A} _{s}^{*}}{\max}\left( \pi _{s,a^{\prime}}+\eta _sA_{s,a^{\prime}}^{\pi} \right) \right) _+} \ge 1-\mathcal{O} \left( b_{s}^{\pi} \right) +\mathcal{O} \left( \Delta \right).
$$
This implies that if  $b^\pi_s$ is sufficiently small, the condition in \eqref{finite_time_convergence_intuition_formualtion} will be met. Then both  PPG and PQA will output the optimal policy.

\begin{lemma}[Optimality condition] Consider the prototype update in \eqref{policy update of PPG and PQD}. Define
$$\varepsilon^\pi_{s,a} := \eta_s \left( A^\pi_{s,a} - A^*_{s,a} \right)\quad \mbox{and}\quad\varepsilon^\pi_s := [\varepsilon_{s,a}^\pi]_{a \in \mathcal{A}}.$$
 If the input policy $\pi$ satisfies,
    \begin{align}
      b_{s}^{\pi}+\|\varepsilon _{s}^{\pi}\|_{\infty}\le \frac{\eta _{s}\Delta}{2},\quad \forall s\in \mathcal{S},
    \label{finite time condition of PPG and PQD }
    \end{align}
    then $\pi^+$ is an optimal policy. 
    \label{lemma: condition of finite time of PPG and PQD}
\end{lemma}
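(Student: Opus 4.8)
The plan is to show that hypothesis \eqref{finite time condition of PPG and PQD } forces $\pi^+_{s,a'}=0$ for every state $s$ and every non-optimal action $a'\notin\calA_s^\ast$. Once the support of $\pi^+_s$ lies inside $\calA_s^\ast$ at every state, optimality is immediate: applying the performance difference lemma (Lemma~\ref{performanceDifferenceLemma}) with $\pi_1=\pi^+$ and $\pi_2=\pi^\ast$, and using that $A^\ast(s,a)=Q^\ast(s,a)-V^\ast(s)=0$ for $a\in\calA_s^\ast$, yields $V^{\pi^+}(\rho)=V^\ast(\rho)$ for all $\rho$, so $\pi^+$ is optimal. Thus the entire content of the lemma reduces to verifying the support condition state by state.

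For $s\notin\tilde{\calS}$ there are no non-optimal actions and the claim is vacuous, so I fix $s\in\tilde{\calS}$. By the reformulated gap property \eqref{finite_time_convergence_intuition_formualtion} (i.e.\ Lemma~\ref{proj_property_2} with $\calB=\calA_s^\ast$ and $\calC=\calA\setminus\calA_s^\ast$), it suffices to establish
\[
\sum_{a\in\calA_s^\ast}\left(\pi_{s,a}+\eta_s A^\pi_{s,a}-\max_{a'\notin\calA_s^\ast}\bigl(\pi_{s,a'}+\eta_s A^\pi_{s,a'}\bigr)\right)_+\ge 1.
\]
The key idea is to pass from $A^\pi$ to $A^\ast$ through the identity $\eta_s A^\pi_{s,a}=\eta_s A^\ast_{s,a}+\varepsilon^\pi_{s,a}$. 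On the optimal set I will use $A^\ast_{s,a}=0$, giving $\pi_{s,a}+\eta_s A^\pi_{s,a}\ge\pi_{s,a}-\|\varepsilon_s^\pi\|_\infty$; on the non-optimal set I will use $A^\ast_{s,a'}\le-\Delta$ (by the definition of $\Delta$) together with $\pi_{s,a'}\le b_s^\pi$ to bound the maximum by $b_s^\pi-\eta_s\Delta+\|\varepsilon_s^\pi\|_\infty$. Subtracting and invoking the hypothesis $b_s^\pi+\|\varepsilon_s^\pi\|_\infty\le\eta_s\Delta/2$ will show each summand is at least $\pi_{s,a}+b_s^\pi\ge 0$, so the positive part can be dropped; summing over $\calA_s^\ast$ and using $\sum_{a\in\calA_s^\ast}\pi_{s,a}=1-b_s^\pi$ with $|\calA_s^\ast|\ge 1$ gives a total of at least $(1-b_s^\pi)+|\calA_s^\ast|\,b_s^\pi\ge 1$, as required.

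The main obstacle is bookkeeping rather than conceptual. The delicate point is that the $\max$ over non-optimal actions must be controlled by the \emph{aggregate} mass $b_s^\pi$, discarding the split among individual non-optimal probabilities, and the two separate appearances of $\|\varepsilon_s^\pi\|_\infty$ (once in the lower bound on the optimal entries, once in the upper bound on the maximum) must both be absorbed by the factor of two built into the hypothesis. The remaining care is simply to confirm each summand is genuinely nonnegative before dropping the positive-part operator, and to keep the vacuous case $s\notin\tilde{\calS}$ separate so the reduction to \eqref{finite_time_convergence_intuition_formualtion} is clean.
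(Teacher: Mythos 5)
Your proposal is correct and follows essentially the same route as the paper's proof: both reduce the claim via the gap property (Lemma~\ref{proj_property_2}) to the condition $\sum_{a\in\calA_s^\ast}(\pi_{s,a}+\eta_s A^\pi_{s,a}-\max_{a'\notin\calA_s^\ast}(\pi_{s,a'}+\eta_s A^\pi_{s,a'}))_+\ge 1$, pass from $A^\pi$ to $A^\ast$ through $\varepsilon^\pi_s$, bound the non-optimal maximum by $b_s^\pi-\eta_s\Delta+\|\varepsilon_s^\pi\|_\infty$, and absorb both appearances of $\|\varepsilon_s^\pi\|_\infty$ using the factor of two in the hypothesis, with only cosmetic differences (you apply the hypothesis per summand and make the final optimality step explicit via the performance difference lemma, whereas the paper sums first and leaves that last step implicit).
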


\begin{proof}
For any $s \in \calS$, a direction computation yields that
    \begin{align*}
&\sum_{a\in \mathcal{A} _{s}^{*}}{\left( \pi _{s,a}+\eta _{s}A_{s,a}^{\pi}-\max_{a^{\prime}\notin \mathcal{A} _{s}^{*}} \left( \pi _{s,a^{\prime}}+\eta _{s}A_{s,a^{\prime}}^{\pi} \right) \right) _+} \\
&\ge \sum_{a\in \mathcal{A} _{s}^{*}}{\left( \pi _{s,a}+\eta _{s}A_{s,a}^{\pi}-\max_{a^{\prime}\notin \mathcal{A} _{s}^{*}} \left( \pi _{s,a^{\prime}}+\eta _{s}A_{s,a^{\prime}}^{\pi} \right) \right)}
\\
&\overset{\left( a \right)}{=}\sum_{a\in \mathcal{A} _{s}^{*}}{\left( \pi _{s,a}+\eta _{s}A_{s,a}^{\pi}-\left( \pi _{s,\tilde{a}}+\eta_sA_{s,\tilde{a}}^{\pi} \right) \right)}
\\
&=\sum_{a\in \mathcal{A} _{s}^{*}}{\left[ \left( \pi_{s,a}-\eta_s\left| A_{s,a}^{*} \right|+\varepsilon _{s,a}^{k} \right) -\left( \pi _{s,\tilde{a}}-\eta_s\left| A_{s,\tilde{a}}^{*} \right|+\varepsilon _{s,\tilde{a}}^{\pi} \right) \right]}
\\
&\overset{\left( b \right)}{=}\sum_{a\in \mathcal{A} _{s}^{*}}{\left[ \left( \pi_{s,a}+\varepsilon _{s,a}^{k} \right) -\left( \pi _{s,\tilde{a}}-\eta_s\left| A_{s,\tilde{a}}^{*} \right|+\varepsilon _{s,\tilde{a}}^{\pi} \right) \right]}
\\
&\overset{\left( c \right)}{\ge}\sum_{a\in \mathcal{A} _{s}^{*}}{\left[ \left( \pi_{s,a}-\pi _{s,\tilde{a}} \right) +\eta_s\Delta +\left( \varepsilon _{s,a}^{\pi}-\varepsilon _{s,\tilde{a}}^{\pi} \right) \right]}
\ge \sum_{a\in \mathcal{A} _{s}^{*}}{\left[ \left( \pi_{s,a}-b_{s}^{\pi} \right) +\eta_s\Delta -2\left\| \varepsilon _{s}^{\pi} \right\| _{\infty} \right]}
\\
&=\sum_{a\in \mathcal{A} _{s}^{*}}{\pi_{s,a}}+\left| \mathcal{A} _{s}^{*} \right|\left( \eta_s\Delta -b_{s}^{\pi}-2\left\| \varepsilon _{s}^{\pi} \right\| _{\infty} \right) 
=1-b_{s}^{\pi}+\left| \mathcal{A} _{s}^{*} \right|\left( \eta_s\Delta -b_{s}^{\pi}-2\left\| \varepsilon _{s}^{\pi} \right\| _{\infty} \right) 
\\
&\ge 1-\left| \mathcal{A} _{s}^{*} \right|b_{s}^{\pi}+\left| \mathcal{A} _{s}^{*} \right|\left( \eta_s\Delta -b_{s}^{\pi}-2\left\| \varepsilon _{s}^{\pi} \right\| _{\infty} \right) 
=1+\left| \mathcal{A} _{s}^{*} \right|\left[ \eta_s\Delta -2\left( b_{s}^{\pi}+\left\| \varepsilon _{s}^{\pi} \right\| _{\infty} \right) \right] 
\ge 1,
    \end{align*}
    where   $
\tilde{a}:=\mathrm{argmax}_{a^{\prime}\notin \mathcal{A} _{s}^{*}} \,\,\left\{ \pi _{s,a^{\prime}}^{k}+\eta_sA_{s,a^{\prime}}^{k} \right\} $ in $(a)$, $(b)$ is due to $A^*_{s,a}=0$ for all $a \in \mathcal{A}^*_s$, and $(c)$ follows from the definition of $\Delta$. Combining this result with Lemma \ref{proj_property_2} we obtain that 
\ljc{
    \begin{align*}
 \pi _{s,a'}^{+}=0,\quad \forall\, a'\notin \mathcal{A} _{s}^{*}.
    \end{align*}
}
    which means $\pi^+$ is an optimal policy.
\end{proof}

Next we will show that the LHS of  \eqref{finite time condition of PPG and PQD } is actually of  order $\mathcal{O}\left( \left\| V^*-V^k \right\| _{\infty} \right).$ 
Thus the condition \eqref{finite time condition of PPG and PQD } can be satisfied provided the value error converges to zero and step sizes are constant (in this case the RHS of \eqref{finite time condition of PPG and PQD } is $\mathcal{O}(\Delta)$). 

\begin{lemma}[Optimality condition continued in terms of state values] Consider the prototype update in  \eqref{policy update of PPG and PQD}. If the state values of the input policy $\pi$ satisfies,
    \begin{align}
   \left\| V^*-V^\pi \right\| _{\infty}\le \frac{\Delta}{2}\frac{\eta _{s}\Delta}{1+\eta _{s}\Delta},
    \quad\forall s\in \mathcal{S},
    \label{finite time condition of PPG and PQD w.r.t value error}
    \end{align}
    then $\pi^+$ is an optimal policy. 
    \label{corollary: condition of finite time of PPG and PQD w.r.t value error}
\end{lemma}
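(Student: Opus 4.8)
The plan is to reduce this statement to the preceding optimality condition, Lemma~\ref{lemma: condition of finite time of PPG and PQD}, by showing that the value-error hypothesis \eqref{finite time condition of PPG and PQD w.r.t value error} implies the hypothesis $b_s^\pi + \|\varepsilon_s^\pi\|_\infty \le \frac{\eta_s\Delta}{2}$ for every $s\in\calS$. Concretely, I would bound each of the two quantities $\|\varepsilon_s^\pi\|_\infty$ and $b_s^\pi$ separately by a multiple of $\|V^*-V^\pi\|_\infty$, add them, and then check that the resulting threshold matches the right-hand side of \eqref{finite time condition of PPG and PQD w.r.t value error}.

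The term $\|\varepsilon_s^\pi\|_\infty$ is immediate from the definition $\varepsilon^\pi_{s,a}=\eta_s(A^\pi_{s,a}-A^*_{s,a})$: taking the max over $a$ gives $\|\varepsilon_s^\pi\|_\infty=\eta_s\max_a|A^\pi_{s,a}-A^*_{s,a}|\le \eta_s\|A^\pi-A^*\|_\infty$, and then the second bullet of Lemma~\ref{lemma: advantage_Lemma} yields $\|\varepsilon_s^\pi\|_\infty\le \eta_s\|V^*-V^\pi\|_\infty$.

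The main point — and the step where one has to be a little careful — is the state-wise bound on $b_s^\pi$. Note that Lemma~\ref{lemma: b le V error} only controls $b_s^\pi$ \emph{in expectation} over $s$, so a fresh argument is needed to get a uniform-in-$s$ estimate. The trick I would use is the identity $\sum_{a}\pi_{s,a}A^\pi_{s,a}=0$ (since $\sum_a \pi_{s,a}Q^\pi_{s,a}=V^\pi(s)$). Because $A^*_{s,a}\le 0$ for all $a$ and $A^*_{s,a}\le -\Delta$ for $a\notin\calA^*_s$, one has $-\sum_a\pi_{s,a}A^*_{s,a}\ge \Delta\sum_{a\notin\calA^*_s}\pi_{s,a}=\Delta\, b_s^\pi$. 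On the other hand, subtracting the zero quantity $\sum_a\pi_{s,a}A^\pi_{s,a}$ gives $-\sum_a\pi_{s,a}A^*_{s,a}=\sum_a\pi_{s,a}(A^\pi_{s,a}-A^*_{s,a})\le \|A^\pi-A^*\|_\infty\le \|V^*-V^\pi\|_\infty$, again by Lemma~\ref{lemma: advantage_Lemma}. Combining the two inequalities produces the clean state-wise bound $b_s^\pi\le \|V^*-V^\pi\|_\infty/\Delta$.

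Finally I would assemble the pieces:
\begin{align*}
b_s^\pi+\|\varepsilon_s^\pi\|_\infty \le \left(\frac{1}{\Delta}+\eta_s\right)\|V^*-V^\pi\|_\infty = \frac{1+\eta_s\Delta}{\Delta}\|V^*-V^\pi\|_\infty.
\end{align*}
Under hypothesis \eqref{finite time condition of PPG and PQD w.r.t value error}, the right-hand side is at most $\frac{1+\eta_s\Delta}{\Delta}\cdot\frac{\Delta}{2}\frac{\eta_s\Delta}{1+\eta_s\Delta}=\frac{\eta_s\Delta}{2}$, which is precisely the premise of Lemma~\ref{lemma: condition of finite time of PPG and PQD}. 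Invoking that lemma then shows $\pi^+$ is optimal, completing the proof. I expect the only genuinely nontrivial ingredient to be the state-wise control of $b_s^\pi$ via the identity $\sum_a\pi_{s,a}A^\pi_{s,a}=0$; once that is in place the rest is a direct substitution, and the stated threshold is exactly what makes the two reductions fit together.
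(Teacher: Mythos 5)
Your proof is correct, and its overall structure is exactly the paper's: reduce to Lemma~\ref{lemma: condition of finite time of PPG and PQD} by bounding $b_s^\pi$ and $\|\varepsilon_s^\pi\|_\infty$ separately in terms of $\|V^*-V^\pi\|_\infty$, then check that the threshold $\frac{\Delta}{2}\frac{\eta_s\Delta}{1+\eta_s\Delta}$ makes the constants match. The one place you diverge is the state-wise bound $b_s^\pi\le \|V^*-V^\pi\|_\infty/\Delta$. You assert that Lemma~\ref{lemma: b le V error} ``only controls $b_s^\pi$ in expectation'' and that a fresh argument is therefore needed; this is not quite right. That lemma holds for \emph{any} $\rho\in\Delta(\calS)$, and the paper simply takes $\rho(\cdot)=\mathbb{I}(\cdot=s)$, the point mass at $s$, which collapses $\mathbb{E}_{s\sim\rho}[b_s^\pi]$ to $b_s^\pi$ and yields $b_s^\pi\le (V^*(s)-V^\pi(s))/\Delta\le\|V^*-V^\pi\|_\infty/\Delta$ in one line. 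Your replacement argument --- combining the identity $\sum_a\pi_{s,a}A^\pi_{s,a}=0$ with $A^*_{s,a}\le-\Delta$ for $a\notin\calA_s^*$ and the second bullet of Lemma~\ref{lemma: advantage_Lemma} --- is valid, and it is in fact slightly more self-contained, since it bypasses the performance difference lemma and the visitation measure entirely; it is essentially a direct single-state re-derivation of Lemma~\ref{lemma: b le V error}. What you lose is only brevity, and the slightly sharper pointwise bound $b_s^\pi\le (V^*(s)-V^\pi(s))/\Delta$ that the point-mass trick gives for free (the sup-norm version you derive suffices here, since the hypothesis is stated in sup norm anyway).
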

\begin{proof} For any $s \in \mathcal{S}$, setting $
\rho \left( \cdot \right) =\mathbb{I} \left( \cdot =s \right) $
in Lemma \ref{lemma: b le V error}, where $\mathbb{I}$ is the indicator function, yields that
\begin{align*}
b_{s}^{\pi}\le \frac{V^*\left( s \right) -V^\pi\left( s \right)}{\Delta}\le \frac{\left\| V^*-V^\pi \right\| _{\infty}}{\Delta}.
\end{align*}
Combining this result with Lemma \ref{lemma: advantage_Lemma} we have
\begin{align}
\underset{s\in \mathcal{S}}{\max}\,\,b_{s}^{\pi}+\left\| \varepsilon _{s}^{\pi} \right\| _{\infty}\le \frac{1}{\Delta}\left\| V^*-V^\pi \right\| _{\infty}+\eta _{s}\left\| V^*-V^\pi \right\| _{\infty}=\left( \frac{1}{\Delta}+\eta _{s} \right) \left\| V^*-V^\pi \right\| _{\infty}.
\label{formulation1}
\end{align}
The proof is completed by noting the assumption and  Lemma~\ref{lemma: condition of finite time of PPG and PQD}.
\end{proof}

Since the sublinear convergence of PPG (Theorem \ref{theorem:sublinear of PPG}) and PQA (\eqref{xiao's results for PQA}) has already been established, there must exist an iteration $k_0$ such that $\left\| V^*-V^k \right\|_{\infty}$ is smaller than the threshold given in Lemma~\ref{corollary: condition of finite time of PPG and PQD w.r.t value error}. 

\begin{theorem}[Finite iteration convergence of PPG]
With any constant step size $\eta_k = \eta > 0$, PPG terminates after at most 
\label{upper bound of k0 with constant stepsize for PPG}
\begin{align*}
    k_0:=\left\lceil\frac{2}{\Delta} \left( 1 + \frac{1}{\eta\tilde{\mu}\Delta} \right) \frac{1}{\tilde{\mu}(1-\gamma)^2} \left\| \frac{d^*_\mu}{\mu} \right\|_\infty \left( 1 + \frac{2+5|\calA|}{\eta\tilde{\mu}} \right)\right\rceil
\end{align*}
iterations.
\end{theorem}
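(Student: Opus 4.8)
The plan is to combine the value-error optimality certificate of Lemma~\ref{corollary: condition of finite time of PPG and PQD w.r.t value error} with the $O(1/k)$ sublinear rate of Theorem~\ref{theorem:sublinear of PPG}. Concretely, I would locate the first iteration at which the sublinear bound forces $\|V^*-V^k\|_\infty$ below the optimality threshold appearing in \eqref{finite time condition of PPG and PQD w.r.t value error}, and then invoke the certificate to conclude that the subsequent PPG update returns an optimal policy.

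The first step is to replace the state-dependent threshold in \eqref{finite time condition of PPG and PQD w.r.t value error} by a single state-uniform one. For PPG one has $\eta_s=\frac{\eta d^{\pi}_\mu(s)}{1-\gamma}\ge \eta\tilde{\mu}$ by \eqref{bound of d}, and since $t\mapsto \frac{t\Delta}{1+t\Delta}$ is increasing in $t>0$, it follows that $\frac{\Delta}{2}\frac{\eta_s\Delta}{1+\eta_s\Delta}\ge \frac{\Delta}{2}\frac{\eta\tilde{\mu}\Delta}{1+\eta\tilde{\mu}\Delta}=:T$ for every $s\in\calS$. Hence, as soon as $\|V^*-V^k\|_\infty\le T$, the hypothesis of Lemma~\ref{corollary: condition of finite time of PPG and PQD w.r.t value error} holds at every state simultaneously, and the next PPG update returns an optimal policy.

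Next I would express the condition $\|V^*-V^k\|_\infty\le T$ in terms of the quantities controlled by Theorem~\ref{theorem:sublinear of PPG}. The third bullet of Lemma~\ref{lemma: advantage_Lemma} with $\rho=\mu$ gives $\|V^*-V^k\|_\infty\le \frac{V^*(\mu)-V^k(\mu)}{\tilde{\mu}}$, while Theorem~\ref{theorem:sublinear of PPG} with $\rho=\mu$ gives $V^*(\mu)-V^k(\mu)\le \frac{1}{k}\frac{1}{(1-\gamma)^2}\left\|\frac{d^*_\mu}{\mu}\right\|_\infty\left(1+\frac{2+5|\calA|}{\eta\tilde{\mu}}\right)$. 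Denoting the product of the remaining constants by $C$, so that $\|V^*-V^k\|_\infty\le C/k$, the requirement $C/k\le T$ is equivalent to $k\ge C/T$. Using $T=\frac{\Delta}{2}\big(1+\frac{1}{\eta\tilde{\mu}\Delta}\big)^{-1}$, a direct simplification shows that $C/T$ equals exactly the displayed expression, so $k_0=\lceil C/T\rceil$ is the first integer index at which the value error is guaranteed to lie below $T$, after which the certificate applies.

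I expect the only delicate points to be bookkeeping rather than conceptual. The two items to watch are: (i) the reduction to the state-uniform threshold $T$, which hinges on the monotonicity of $t\mapsto \frac{t\Delta}{1+t\Delta}$ together with $\eta_s\ge\eta\tilde{\mu}$; and (ii) the usual one-step offset between the iterate whose value error first drops below $T$ and the optimal iterate it produces, which affects only whether the count is written as $k_0$ or $k_0+1$ and is absorbed by the ceiling and the phrase ``at most'' in the statement. No step appears to present a genuine obstacle.
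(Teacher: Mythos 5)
Your proposal is correct and follows essentially the same route as the paper's own proof: reduce the state-dependent threshold of Lemma~\ref{corollary: condition of finite time of PPG and PQD w.r.t value error} to the uniform one via $\eta_s^k \ge \eta\tilde{\mu}$ and monotonicity of $t\mapsto \frac{t\Delta}{1+t\Delta}$, then chain Lemma~\ref{lemma: advantage_Lemma} (third bullet, with $\rho=\mu$) and Theorem~\ref{theorem:sublinear of PPG} to get $\|V^*-V^k\|_\infty \le C/k$, and solve $C/k \le T$ for $k$, whose solution is exactly the stated $k_0$. The only point worth noting is the one-step offset you already flag, which is handled the same way in the paper.
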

\begin{proof} Since $\eta^k_s = \frac{\eta d^k_\mu(s)}{1-\gamma} > \eta \tilde{\mu}$ for PPG,  the RHS of \eqref{finite time condition of PPG and PQD w.r.t value error} satisfies 
\begin{align*}
\frac{\Delta}{2}\frac{\eta _{s}^{k}\Delta}{1+\eta _{s}^{k}\Delta}\ge \frac{\Delta}{2}\frac{\eta \tilde{\mu}\Delta}{1+\eta \tilde{\mu}\Delta}.\numberthis\label{eq:kwtmp01}
\end{align*}
According to Lemma \ref{lemma: advantage_Lemma} and Theorem \ref{theorem:sublinear of PPG},
\begin{align*}
\left\| V^*-V^k \right\| _{\infty}&\le \frac{V^*\left( \mu \right) -V^k\left( \mu \right)}{\tilde{\mu}}\le \frac{1}{k} \frac{1}{(1-\gamma)^2} \left\| \frac{d^*_\mu}{\mu} \right\|_\infty \frac{1}{\tilde{\mu}} \left( 1 + \frac{2+5|\calA|}{\eta\tilde{\mu}} \right)\\
&\leq \frac{\Delta}{2}\frac{\eta _{s}\Delta}{1+\eta _{s}\Delta},
\numberthis\label{upper bound of infty_error_of_PPG}
\end{align*}
where the last inequality follows from \eqref{eq:kwtmp01} and the expression of $k_0$.
\end{proof}

\begin{theorem}[Finite iteration convergence of PQA]
\label{upper bound of k0 with constant stepsize}
With any constant step size $\eta_k = \eta >0$, PQA terminates after at most  
$$
k_0:=\left\lceil\frac{2}{\Delta}\left( 1+\frac{1}{\eta \Delta} \right)\left( \frac{1}{\eta \left( 1-\gamma \right)}+\frac{1}{\left( 1-\gamma \right) ^2} \right) - 1\right\rceil
$$
iterations.
\end{theorem}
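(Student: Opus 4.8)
The plan is to reuse the template just established for the PPG finite-iteration result (Theorem~\ref{upper bound of k0 with constant stepsize for PPG}): combine the value-based optimality condition of Lemma~\ref{corollary: condition of finite time of PPG and PQD w.r.t value error} with the already-available sublinear rate for PQA, and then solve for the first iteration at which the sublinear bound drops below the optimality threshold. The only structural difference from the PPG case is that for PQA the state-wise step size is exactly $\eta_s=\eta$ for every $s\in\calS$, so no slack of the form $\eta_s\ge\eta\tilde{\mu}$ is needed and the resulting bound is dimension-free.

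First I would specialize Lemma~\ref{corollary: condition of finite time of PPG and PQD w.r.t value error} to PQA. Since $\eta_s=\eta$ for all $s$, the condition becomes
$$\|V^*-V^\pi\|_\infty \le \frac{\Delta}{2}\frac{\eta\Delta}{1+\eta\Delta} =: T,$$
uniformly in $s$, and whenever $V^k$ satisfies this the next iterate $\pi^{k+1}$ produced by the prototype update is optimal. For later use I record that $\frac{1}{T}=\frac{2}{\Delta}\left(1+\frac{1}{\eta\Delta}\right)$.

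Next I would invoke the sublinear convergence of PQA in \eqref{xiao's results for PQA}, which furnishes a decreasing, dimension-free $\ell_\infty$ bound of the form
$$\|V^*-V^k\|_\infty \le \frac{1}{k+1}\left(\frac{1}{\eta(1-\gamma)}+\frac{1}{(1-\gamma)^2}\right),$$
whose parenthesized coefficient I abbreviate by $C$. It then suffices to find the smallest $k$ with $\frac{C}{k+1}\le T$, equivalently $k+1\ge C/T$, i.e.
$$k \ge \frac{2}{\Delta}\left(1+\frac{1}{\eta\Delta}\right)\left(\frac{1}{\eta(1-\gamma)}+\frac{1}{(1-\gamma)^2}\right)-1.$$
Taking $k_0$ to be the ceiling of the right-hand side reproduces exactly the stated expression; by the optimality condition the iterate generated at step $k_0$ is then optimal, so PQA terminates after at most $k_0$ iterations.

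The argument is essentially bookkeeping layered on top of two results I am allowed to assume, so I do not expect a genuinely hard step. The only points that require care are (i) invoking the correct, dimension-free $\ell_\infty$ form of the PQA rate from \eqref{xiao's results for PQA}, since its $\frac{1}{k+1}$ normalization (as opposed to the $\frac{1}{k}$ normalization in the PPG bound) is precisely what produces the $-1$ inside $k_0$, and (ii) aligning the iteration index, because the optimality condition imposed on $V^k$ certifies optimality of the \emph{subsequent} update rather than of $\pi^k$ itself.
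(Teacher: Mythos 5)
Your proposal is correct and follows essentially the same route as the paper: specialize Lemma~\ref{corollary: condition of finite time of PPG and PQD w.r.t value error} to PQA (where $\eta_s=\eta$), convert the bound \eqref{xiao's results for PQA} into a dimension-free $\ell_\infty$ bound (the paper does this by taking $\rho$ to be a point mass at each state and bounding $\mathbb{E}_{s\sim d^*_\rho}\left[\|\pi^*_s-\pi^0_s\|_2^2\right]\le 2$), and then solve $\frac{C}{k+1}\le T$ for the smallest admissible $k$. Your two flagged care points, the $\frac{1}{k+1}$ normalization producing the $-1$ and the one-step index offset in the optimality certificate, are exactly the details that make the bookkeeping come out right.
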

\begin{proof}
Note that the following sublinear convergence of PQA has been established in \cite{Xiao_2022} for any constant step size,
\begin{align}
    V^*\left( \rho \right) -V^{k}\left( \rho \right) \le \frac{1}{k+1}\left( \frac{\mathbb{E} _{s\sim d_{\rho}^{*}}\left[ \left\| \pi _{s}^{*}-\pi _{s}^{0} \right\| _{2}^{2} \right]}{2\eta \left( 1-\gamma \right)}+\frac{1}{\left( 1-\gamma \right) ^2} \right).
    \label{xiao's results for PQA}
\end{align}
Plugging $
\rho _s\left( \cdot \right) :=\mathbb{I}\left\{ \cdot =s \right\}$ into \eqref{xiao's results for PQA} yields that 
\begin{align*}
V^*\left( s \right) -V^{k}\left( s \right) 
 &\le \frac{1}{k+1}\left( \frac{\mathbb{E} _{s\sim d_{\rho _s}^{*}}\left[ \left\| \pi _{s}^{*}-\pi _{s}^{0} \right\| _{2}^{2} \right]}{2\eta \left( 1-\gamma \right)}+\frac{1}{\left( 1-\gamma \right) ^2} \right) 
\\
&\le \frac{1}{k+1}\left( \frac{1}{\eta \left( 1-\gamma \right)}+\frac{1}{\left( 1-\gamma \right) ^2} \right).
\end{align*}
Since this bound holds for any $s$, it also holds for $\|V^*-V^k\|_\infty$. Then it can be easily verified that 
 the condition in Lemma~\ref{corollary: condition of finite time of PPG and PQD w.r.t value error} is satisfied given the expression of $k_0$.
\end{proof}
{
Before proceeding, we give two short discussions on the finite iteration convergence of PPG and PQA. Firstly, it will be shown that a condition similar to that in Lemma~\ref{lemma: condition of finite time of PPG and PQD}  can be obtained based on the optimality condition of the optimization problem. Secondly, though the finite iteration convergence for the  homotopic PQA is discussed in \cite{Li_Zhao_Lan_2022}, it does not imply the finite iteration convergence of PQA for any constant step size. A simple bandit example is used to illustrate that the homotopic PQA requires sufficiently large step size to converge (in fact, the finite iteration of the homotopic PQA is established  for exponentially increasing step sizes in \cite{Li_Zhao_Lan_2022}).
\subsubsection{Short discussion I}
Recall that the update \eqref{policy update of PPG and PQD} corresponds to the following optimization: 
\begin{align*}
    \pi^+_s \hspace{-.1cm}=\hspace{-.05cm} \argmax_{p\in\Delta(\mathcal{A})}\left\{\hspace{-.05cm}\eta_s\langle Q^\pi(s,\cdot),p\rangle\hspace{-.05cm}-\hspace{-.05cm}\frac{1}{2}\|p-\pi_s\|_2^2\right\}\hspace{-.1cm}=\hspace{-.05cm}\argmax_{p\in\Delta(\mathcal{A})}\left\{\hspace{-.05cm}\eta_s\langle A^\pi(s,\cdot),p\rangle\hspace{-.05cm}-\hspace{-.05cm}\frac{1}{2}\|p-\pi_s\|_2^2\right\}.
\end{align*}
The optimality condition for this problem is given by (see for example \cite{Rockafellar})
\begin{align*}
\langle\eta_s A^\pi(s,\cdot)-\pi_s^++\pi_s, \; p'-\pi_s^+\rangle\leq 0,\quad \forall\, p'\in\Delta(\mathcal{A}).\numberthis\label{eq:opt-cond-001}
\end{align*}
Define $N_{\Delta}(p)$ as the normal cone of $\Delta(\mathcal{A})$ at $p$,
\begin{align*}
    N_\Delta(p)=\{g~|~ g^T(p'-p)\leq 0,\,\forall\,p'\in\Delta(\mathcal{A})\}.
\end{align*}
The condition in \eqref{eq:opt-cond-001} can be equivalently expressed as
\begin{align*}
\eta_s A^\pi(s,\cdot)-\pi_s^++\pi_s\in N_\Delta(\pi_s^+).
\end{align*}
Moreover, note that (see for example \cite{Beck})
\begin{align*}
N_\Delta(\pi_s^+) = \left\{(g_1,\cdots,g_{|\mathcal{A}|})~|~g_i\leq g_j=g_\ell,\,\forall\, i\not\in\mathrm{supp}(\pi_s^+)\mbox{ and }\forall\,j,\ell\in\mathrm{supp}(\pi_s^+)\right\}.
\end{align*}
Therefore, if $\forall\, s\in\mathcal{S}$, it can be shown that there exists $g^\pi_{s, \cdot}\in N_\Delta(\pi_s^+)$, such that $\forall \, a\in\mathcal{A}_s^*$ and $a'\not\in\mathcal{A}_s^*$, 
\begin{align*}
   g^\pi_{s,a}-g^\pi_{s,a'}= \left(\eta_s A^\pi_{s,a}-\pi_{s,a}^++\pi_{s,a}\right)-\left(\eta_s A^\pi_{s,a'}-\pi_{s,a'}^++\pi_{s,a'}\right)>0,\numberthis\label{eq:final-cond}
\end{align*}
we can conclude that 
\begin{align*}
    \forall s\in\mathcal{S},\,a'\not\in\mathcal{A}_s^*: \quad  a'\not\in\mathrm{supp}(\pi_s^+),
\end{align*}
which implies $\pi^+$ is an optimal policy.

Recalling the definition of $\varepsilon^\pi_{s,a}=\eta_s \left( A^\pi_{s,a} - A^*_{s,a} \right)$ in Lemma~\ref{lemma: condition of finite time of PPG and PQD}, one has 
\begin{align*}
 g^\pi_{s,a}-g^\pi_{s,a'} & = \left(\eta_s A^*_{s,a}+\varepsilon^\pi_{s,a}-\pi_{s,a}^++\pi_{s,a}\right)-\left(\eta_s A^*_{s,a'}+\varepsilon^\pi_{s,a'}-\pi_{s,a'}^++\pi_{s,a'}\right)\\
 &=\eta_s\left(A^*_{s,a}-A^*_{s,a'}\right)+(\varepsilon^\pi_{s,a}-\varepsilon^\pi_{s,a'})-(\pi^+_{s,a}-\pi_{s,a})+(\pi^+_{s,a'}-\pi_{s,a'})\\
 &\geq \eta_s\Delta- 2\|\varepsilon^\pi_s\|_\infty-\|\pi^+_s-\pi_s\|_\infty-b_s^\pi.\numberthis\label{eq:ke-tmp-01}
\end{align*}
In addition, setting $p'=\pi_s$ in \eqref{eq:opt-cond-001} yields 
\begin{align*}
\|\pi_s^+-\pi_s\|_2^2&\leq \eta_s\sum_a\pi_{s,a}^+A^\pi_{s,a}\leq \eta_s \sum_{s'}\sum_a\pi_{s',a}^+A^\pi_{s',a} 
= \eta_s \sum_{s'}\frac{d_\mu^{\pi^+}(s')}{d_\mu^{\pi^+}(s')}\sum_a\pi_{s',a}^+A^\pi_{s',a} \\
&\leq \frac{\eta_s}{(1-\gamma)\tilde{\mu}} (V^{\pi^+}(\mu)-V^\pi(\mu))\leq \frac{\eta_s}{(1-\gamma)\tilde{\mu}} (V^{*}(\mu)-V^\pi(\mu)). 
\end{align*}
Together with \eqref{eq:ke-tmp-01}, one has $g^\pi_{s,a}-g^\pi_{s,a'}>0$ provided 
\begin{align*}
    b_{s}^{\pi}+2\|\varepsilon _{s}^{\pi}\|_{\infty}+\min\left\{\sqrt{\frac{\eta_s}{(1-\gamma)\tilde{\mu}} (V^{*}(\mu)-V^\pi(\mu))}, \;1\right\}<\eta_s\Delta .
\end{align*}
It is clear that this condition (but not as concise as the one presented in Lemma~\ref{lemma: condition of finite time of PPG and PQD}) can also be used to derive the finite iteration convergence of PPG and PQA for any constant step size.
\subsubsection{Short discussion II}
In \cite{Li_Zhao_Lan_2022}, the finite iteration convergence of  homotopic policy mirror ascent methods under certain Bregman divergence is investigated. When considering the case where the Bregman divergence is generated by  the squared Euclidean distance, it reduces to the following homotopic PQA method: 
\begin{align*}
\pi_s^{k+1} &= \argmax_{p\in\Delta}\;\eta_k\left[\langle Q^{k}(s,\cdot),p\rangle - \frac{\tau_k}{2}\|p-\pi_s^0\|_2^2\right]-\frac{1}{2}\|p-\pi_s^k\|_2^2\\
&=\argmin_{p\in\Delta}\; \frac{1}{2}\left\|p-\frac{1}{1+\eta_k\tau_k}\pi_s^k-\frac{\eta_k}{1+\eta_k\tau_k}Q^k(s,\cdot)\right\|_2^2\\
&=\mathrm{Proj}_\Delta\left(\frac{1}{1+\eta_k\tau_k}\pi_s^k+\frac{\eta_k}{1+\eta_k\tau_k}Q^k(s,\cdot)\right)\\
&=\mathrm{Proj}_\Delta\left(\frac{1}{1+\eta_k\tau_k}\pi_s^k+\frac{\eta_k}{1+\eta_k\tau_k}A^k(s,\cdot)\right),
\end{align*}
where $\pi_s^0$ is a uniform policy, $\tau_k$ is the regularization parameter, and the last line follows from the fact that $\frac{\eta_k}{1+\eta_k\tau_k}V^k(s)\cdot 1$ is a vector with all the same entries.  It follows that there exists $\lambda^k_s$ such that\footnote {Note that in \cite{Li_Zhao_Lan_2022}, a slightly different version is indeed considered. That is,  if $\pi^k_{s,a}=0$, the starting point can be negative due to the requirement for the careful selection of the subgradient in order to  establish the finite iteration convergence  of the algorithm for exponentially increasing step sizes. }
\begin{align*}
\pi_{s,a}^{k+1}=\frac{1}{1+\eta_k\tau_k}\left(\pi_{s,a}^k+\eta_kA^k(s,a)-\lambda^k_s\right)_+\quad\mbox{and}\quad\sum_a\pi^{k+1}_{s,a}=1.
\end{align*}
Consider  the case where $\eta_k\tau_k$ is fixed, for example  $1+\eta_k\tau_k=1/\gamma$ with $0<\gamma<1$ as considered in \cite{Li_Zhao_Lan_2022}. Then the update reduces to 
    \begin{align*}
\pi_{s,a}^{k+1}=\frac{1}{1/\gamma}\left(\pi^k_{s,a}+\eta_k A^k(s,a)-\lambda_s^k\right)_+\quad\mbox{and}\quad\sum_a\left(\pi^k_{s,a}+\eta_k A^k(s,a)-\lambda_s^k\right)_+=\frac{1}{\gamma}.\numberthis\label{eq:ke-tmp-02}
\end{align*}
Note that this update is overall similar to the update of PQA, differing only in the extra factor $\frac{1}{1/\gamma}$. { However, next we will use a very simple example to show that it requires $\eta_k$ to be sufficiently large for \eqref{eq:ke-tmp-02} to be able to convergence. Therefore, even the finite iteration convergence of \eqref{eq:ke-tmp-02} holds, it does not leads to the finite iteration convergence of PQA for any constant step size.}  
 
 More precisely, 
consider the bandit case where there are only two actions $a_1$ and $a_2$. Assume $a_1$ is the single optimal action. Suppose $\pi^k$ is already optimal, i.e., $\pi_{a_1}^k=1$ and $\pi^k_{a_2}=0$.
Then $A^k_{a_1}=0$ and $A^k_{a_2}<0$. Letting $\Delta=|A^k_{a_2}|$, there exists a $\lambda^k$ such that 
  \begin{align*}
  \pi_{a_1}^{k+1}=\gamma(1-\lambda^k)_+, \quad
  \pi^{k+1}_{a_2}=\gamma(-\eta_k\Delta_k-\lambda^k)_+.
  \end{align*}
      Moreover, 
\begin{align*}
(1-\lambda^k)_++(-\eta_k\Delta-\lambda^k)_+=\frac{1}{\gamma}>1.\numberthis\label{eq:ke-tmp-03}
\end{align*}
 First note that there must hold $\lambda^k<0$; otherwise the above equality cannot hold since $\eta_k\Delta>0$.     Assume $\eta_k\Delta<\frac{1}{\gamma}-1$. Then it is easy to verify by contradiction that one should have  $-\lambda^k>\eta_k\Delta$ in order to satisfy \eqref{eq:ke-tmp-03}. It follows that 
\begin{align*}
    \lambda^k = \frac{1}{2}(1-1/\gamma-\eta_k\Delta)>1-\frac{1}{\gamma}.
\end{align*}
 Therefore, when $\eta_k\Delta<\frac{1}{\gamma}-1$, one has $\pi_{a_1}^{k+1}=\gamma(1-\lambda^k)_+<1$.
  That is, $\pi^{k+1}$ is not optimal anymore. In other words, in order for $\pi^{k+1}$ still to be optimal, one must have $\eta_k\Delta \geq 1/\gamma-1$, that is, $\eta_k\geq (1/\gamma-1)/\Delta$ which can very large when $\Delta$ is small.

}

\subsection{Finite iteration convergence of PI and VI}
As a by-product, we will derive a new dimension-free bound for the finite iteration convergence of policy iteration (PI) and value iteration (VI) in terms of $\Delta$ in this section. The following lemma demonstrates that once a vector is sufficiently close to the optimal value vector, then the policy retrieved from that vector is an optimal policy.

\begin{lemma}
For any $V\in\R^{|\mathcal{S}|}$ (not necessarily associated with a policy), define $Q^V\in\R^{|\mathcal{S}|\times |\mathcal{A}|}$ as follows:
\begin{align*}
Q^V(s,a) = \mathbb{E}_{s'\sim P(\cdot|s,a)}[r(s,a,s')+\gamma V(s')].
\end{align*}
If $\gamma \|V^*-V\|_\infty\leq \frac{\Delta}{3}$, then  $\arg\max\limits_a Q^V(s,)\subset \mathcal{A}^*_s$. That is, the greedy policy supported on $\arg\max\limits_a Q^V(s,a)$ is an optimal policy.
\label{lemma: A^k_s = A^*_s}
\end{lemma}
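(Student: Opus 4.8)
The plan is to reduce everything to a perturbation argument that compares $Q^V$ directly with the optimal state-action value function $Q^*$, and then to exploit the advantage gap $\Delta$. First I would note that by definition $Q^*(s,a) = \mathbb{E}_{s'\sim P(\cdot|s,a)}[r(s,a,s') + \gamma V^*(s')]$, so that for every $(s,a)$,
\begin{align*}
\left| Q^V(s,a) - Q^*(s,a) \right| = \gamma\,\Big| \mathbb{E}_{s'\sim P(\cdot|s,a)}\left[ V(s') - V^*(s') \right] \Big| \le \gamma \left\| V - V^* \right\|_\infty \le \frac{\Delta}{3}.
\end{align*}
Thus $Q^V$ is a uniform $\Delta/3$-approximation of $Q^*$, which is the only place the hypothesis $\gamma\|V^*-V\|_\infty\le\Delta/3$ enters.

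Next I would record the gap structure of $Q^*$: for an optimal action $a^\ast \in \mathcal{A}_s^*$ one has $Q^*(s,a^\ast) = V^*(s)$, while for a state $s\in\widetilde{S}$ and any non-optimal action $a \notin \mathcal{A}_s^*$ the definition of $\Delta$ gives $Q^*(s,a) = V^*(s) + A^*(s,a) \le V^*(s) - \Delta$. The conclusion then follows by a short contradiction argument. The case $\mathcal{A}_s^* = \mathcal{A}$ (i.e. $s\notin\widetilde{S}$) is trivial, since then $\arg\max_a Q^V(s,a)\subseteq\mathcal{A}=\mathcal{A}_s^*$. Otherwise fix any maximizer $\hat{a}\in\arg\max_a Q^V(s,a)$ and any $a^\ast\in\mathcal{A}_s^*$, and chain the two-sided approximation with the optimality of $\hat a$ for $Q^V$:
\begin{align*}
Q^*(s,\hat a) \ge Q^V(s,\hat a) - \frac{\Delta}{3} \ge Q^V(s,a^\ast) - \frac{\Delta}{3} \ge Q^*(s,a^\ast) - \frac{2\Delta}{3} = V^*(s) - \frac{2\Delta}{3}.
\end{align*}
If $\hat a$ were non-optimal, the gap bound would force $Q^*(s,\hat a)\le V^*(s)-\Delta$, contradicting the displayed inequality because $\Delta>0$. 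Hence every maximizer $\hat a$ lies in $\mathcal{A}_s^*$, which is exactly the claim (and the greedy policy supported on $\arg\max_a Q^V(s,\cdot)$ is therefore supported on optimal actions, hence optimal).

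I do not anticipate a genuine obstacle: the lemma is entirely a stability estimate, and the factor $\tfrac13$ is in fact slightly generous (two $\Delta/3$ errors are incurred, one at $\hat a$ and one at $a^\ast$, so any bound below $\Delta/2$ would already close the argument). The only points requiring care are this bookkeeping of the two approximation errors and separating out the degenerate states $s\notin\widetilde{S}$—where there are no non-optimal actions to exclude—from those carrying genuine suboptimal actions.
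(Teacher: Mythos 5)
Your proposal is correct and follows essentially the same route as the paper's proof: both establish the uniform bound $|Q^V(s,a)-Q^*(s,a)|\le\gamma\|V^*-V\|_\infty\le\Delta/3$ and then combine it with the $\Delta$-gap between optimal and non-optimal actions in $Q^*$. The only cosmetic difference is that the paper chains the inequalities directly to show every optimal action strictly dominates every non-optimal action under $Q^V$, whereas you argue by contradiction on a maximizer; your side remarks (the degenerate case $\mathcal{A}_s^*=\mathcal{A}$ and the observation that any approximation error strictly below $\Delta/2$ suffices) are both accurate.
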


\begin{proof}
    First, it is easy to see that $\forall s,a$, 
    \begin{align*}
       |Q^*(s,a)-Q^V(s,a)| = \gamma|\mathbb{E}_{s'\sim P(\cdot|s,a)}[V^*(s')-V(s')]|\leq \gamma \|V^*-V\|_\infty\leq \frac{\Delta}{3}.
    \end{align*}
It follows that for  $s$ having non-optimal actions, $a\in\mathcal{A} _{s}^{*}$ and $a'\not\in\mathcal{A} _{s}^{*}$, we have
\begin{align*}
Q^V(s,a)&\geq Q^*(s,a)-\frac{\Delta}{3} \geq Q^*(s,a')+\frac{2\Delta}{3}\geq Q^V(s,a')+\frac{\Delta}{3}>Q^V(s,a'),
\end{align*}
which concludes the proof.
\end{proof}

\begin{theorem}[Finite iteration convergence of PI]
    PI terminates after at most 
    \begin{align*}
        k_0=\left\lceil\frac{1}{1-\gamma} \log \left( \frac{3}{(1-\gamma) \Delta} \right)\right\rceil
    \end{align*}
    iterations.
    \label{theorem: upper bound of PI iteration}
\end{theorem}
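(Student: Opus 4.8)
The plan is to combine the geometric convergence of the value iterates of PI with the optimality criterion of Lemma~\ref{lemma: A^k_s = A^*_s}. Recall that each PI iteration evaluates the current policy to obtain $V^k := V^{\pi^k}$ and then performs a greedy improvement $\pi^{k+1}(s)\in\arg\max_a Q^{\pi^k}(s,a)$. Observing that $Q^{\pi^k}$ coincides with $Q^{V^k}$ in the notation of Lemma~\ref{lemma: A^k_s = A^*_s}, that lemma tells us $\pi^{k+1}$ is already an optimal policy as soon as $\gamma\|V^*-V^k\|_\infty\le \Delta/3$. Hence it suffices to locate the first index at which this threshold is met.

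Next I would invoke the standard geometric convergence of policy iteration, namely $\|V^*-V^k\|_\infty\le\gamma^k\|V^*-V^0\|_\infty$, which follows from the monotone improvement of PI together with the fact that one greedy improvement step contracts toward $V^*$ at least as fast as a single application of the Bellman optimality operator (i.e.\ value iteration). Combined with the crude bound $\|V^*-V^0\|_\infty\le\frac{1}{1-\gamma}$ from Lemma~\ref{bounds of V Q A}, this yields
\begin{align*}
\gamma\|V^*-V^{k-1}\|_\infty\le\frac{\gamma^{k}}{1-\gamma}.
\end{align*}
Therefore $\pi^{k}$ is guaranteed to be optimal whenever $\gamma^{k}\le\frac{(1-\gamma)\Delta}{3}$.

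It remains to convert this geometric condition into the stated iteration count. Taking logarithms, $\gamma^{k}\le\frac{(1-\gamma)\Delta}{3}$ is equivalent to $k\ge\frac{\log(3/((1-\gamma)\Delta))}{-\log\gamma}$; note the argument of the logarithm exceeds $1$ since $\Delta\le\frac{1}{1-\gamma}$ by Lemma~\ref{bounds of V Q A}, so $\frac{3}{(1-\gamma)\Delta}\ge 3$. Using the elementary inequality $-\log\gamma\ge 1-\gamma$, the right-hand side is at most $\frac{1}{1-\gamma}\log\!\left(\frac{3}{(1-\gamma)\Delta}\right)$, so taking $k_0=\left\lceil\frac{1}{1-\gamma}\log\!\left(\frac{3}{(1-\gamma)\Delta}\right)\right\rceil$ ensures $\gamma^{k_0}\le\frac{(1-\gamma)\Delta}{3}$, and hence that $\pi^{k_0}$ is optimal.

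The routine parts are the logarithmic manipulation and the estimate $-\log\gamma\ge 1-\gamma$. The only point requiring genuine care is the geometric contraction rate of PI, that is, the inequality $\|V^*-V^{k+1}\|_\infty\le\gamma\|V^*-V^k\|_\infty$, which rests on the greedy improvement step dominating one step of the Bellman optimality operator together with the monotonicity of policy evaluation. This is the main, albeit classical, ingredient, and everything else is a direct consequence of Lemma~\ref{lemma: A^k_s = A^*_s}.
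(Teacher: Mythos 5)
Your proof is correct and follows essentially the same route as the paper: geometric convergence of PI (policy iteration dominating value iteration) combined with Lemma~\ref{lemma: A^k_s = A^*_s} to certify that the greedy step yields an optimal policy once $\gamma\|V^*-V^k\|_\infty\le\Delta/3$, then the bound $-\log\gamma\ge 1-\gamma$ to get the stated iteration count. The only difference is presentational: you spell out the logarithmic manipulation and the reason PI contracts at rate $\gamma$, which the paper dispatches by citation and by calling the final step ``trivial to verify.''
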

\begin{proof} Notice that the value error generated by PI satisfies,
$$
\left\| V^*-V^{k} \right\| _{\infty}\le \gamma ^k\left\| V^*-V^{0} \right\| _{\infty}\le \frac{\gamma ^k}{1-\gamma},
$$
see for example \cite{Bertsekas} for the proof.
According to Lemma \ref{lemma: A^k_s = A^*_s}, when 
\begin{align}
\frac{\gamma ^{k+1}}{1-\gamma}\le \frac{\Delta}{3},
\label{termination condition of PI}
\end{align}
we have $\mathcal{A}^k_s \subset \mathcal{A}^*_s$ after that. It's trivial to verify that \eqref{termination condition of PI} holds for $\pi^k$ when $k\geq k_0$. Since PI puts all the probabilities on the action set $\mathcal{A}^k_s$ in each iteration,  we have $\calA_s^k \subset \calA^*_s$ when $k \ge k_0$,
which implies PI outputs an optimal policy after $k_0$.
\end{proof}

\begin{remark} It is well-known that PI is a strong polynomial algorithm \textup{(}see for example \textup{\cite{pi_Bruno}}\textup{)}, which means 
  PI outputs an optimal policy after $
\mathcal{O} \left( \frac{\left| \mathcal{S} \right|\left| \mathcal{A} \right|}{1-\gamma}\log \frac{1}{1-\gamma} \right)$ iterations. Compared with this strong polynomial bound, the bound in Theorem \ref{theorem: upper bound of PI iteration} is dimension-free but relies on the parameter $\Delta$ \kw{that depends on the particular MDP problem}. 
The dimension-free bound is better in the case $
\frac{1}{\Delta}=o\left( \frac{1}{\left( 1-\gamma \right) ^{\left| \mathcal{S} \right|\left| \mathcal{A} \right|}} \right)$.

\end{remark}


\begin{theorem} Let $\pi^k$ be the sequence of greedy policy generated by $V^k$ in VI (Note that $V^k$ is not necessarily a value function of $\pi^k$). Then after at most 
$$
k_0:=\left\lceil\frac{1}{ 1-\gamma }\log \left( \frac{3\|V^*-V^0\|_\infty}{\Delta}\right)\right\rceil
$$
iterations, $\pi^k$  is an optimal policy.
\label{theorem: upper bound of VI}
\end{theorem}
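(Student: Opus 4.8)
The plan is to mirror the proof of Theorem~\ref{theorem: upper bound of PI iteration}, replacing the convergence guarantee of PI by that of VI and then invoking the same greedy‑optimality criterion, Lemma~\ref{lemma: A^k_s = A^*_s}. First I would recall that the VI iterates $V^k$ are produced by repeatedly applying the Bellman optimality operator $\mathcal{T}$, which is a $\gamma$-contraction in the sup-norm with unique fixed point $V^*$; consequently
\begin{align*}
\|V^*-V^k\|_\infty = \|\mathcal{T}V^*-\mathcal{T}V^{k-1}\|_\infty \le \gamma\|V^*-V^{k-1}\|_\infty \le \gamma^k\|V^*-V^0\|_\infty,
\end{align*}
see for example \cite{Bertsekas}. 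This is the only VI-specific ingredient, and it is standard.

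Next I would observe that, by hypothesis, $\pi^k$ is the greedy policy generated by $V^k$, i.e.\ $\pi^k_s$ is supported on $\arg\max_{a}Q^{V^k}(s,a)$ with $Q^{V^k}(s,a)=\mathbb{E}_{s'\sim P(\cdot|s,a)}[r(s,a,s')+\gamma V^k(s')]$. Hence Lemma~\ref{lemma: A^k_s = A^*_s} applies verbatim with $V=V^k$: whenever $\gamma\|V^*-V^k\|_\infty\le \Delta/3$ one has $\arg\max_a Q^{V^k}(s,a)\subset\mathcal{A}^*_s$ for every $s$, so $\pi^k$ is an optimal policy.

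It then remains to translate the termination condition $\gamma\|V^*-V^k\|_\infty\le\Delta/3$ into the stated iteration count. Using the contraction bound it suffices to ensure $\gamma^{k}\|V^*-V^0\|_\infty\le\Delta/3$, since this already yields $\gamma\|V^*-V^k\|_\infty\le\gamma\cdot\tfrac{\Delta}{3}\le\tfrac{\Delta}{3}$ (the extra factor $\gamma<1$ coming from the hypothesis of Lemma~\ref{lemma: A^k_s = A^*_s} provides some slack). Taking logarithms, $\gamma^k\|V^*-V^0\|_\infty\le\Delta/3$ is equivalent to $k\log(1/\gamma)\ge\log\frac{3\|V^*-V^0\|_\infty}{\Delta}$; applying the elementary inequality $\log(1/\gamma)\ge 1-\gamma$, valid for $\gamma\in(0,1)$, shows that it is enough to require $k\ge \frac{1}{1-\gamma}\log\frac{3\|V^*-V^0\|_\infty}{\Delta}$, that is $k\ge k_0$. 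Combining the three steps completes the argument.

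I expect no genuine obstacle here: the proof is essentially the PI argument with the PI error bound $\gamma^k/(1-\gamma)$ replaced by the VI error bound $\gamma^k\|V^*-V^0\|_\infty$. The only points requiring a little care are invoking the correct contraction ($\mathcal{T}$ rather than a fixed-policy evaluation operator) and keeping track of the single extra factor $\gamma$ afforded by Lemma~\ref{lemma: A^k_s = A^*_s}, which is precisely what makes the $\gamma^k$ (rather than $\gamma^{k+1}$) bound sufficient to match the stated $k_0$.
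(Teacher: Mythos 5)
Your proposal is correct and follows essentially the same route as the paper: the $\gamma$-contraction bound $\|V^*-V^k\|_\infty\le\gamma^k\|V^*-V^0\|_\infty$ for VI, combined with Lemma~\ref{lemma: A^k_s = A^*_s} applied to the greedy policy, with the step-count verification via $\log(1/\gamma)\ge 1-\gamma$. The paper's proof is merely terser, leaving the logarithmic bookkeeping and the slack factor $\gamma$ implicit, both of which you spell out correctly.
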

\begin{proof} 
The value error generated by VI satisfies 
$$
\left\| V^*-V^{{k}} \right\| _{\infty}\le \gamma^k\|V^*-V^0\|_\infty\le \frac{\Delta}{3},
$$
where the second inequality follows from the assumption. Then the application of Lemma~\ref{lemma: A^k_s = A^*_s} concludes the proof.
\end{proof}
\begin{remark} It is worth noting that since VI does not evaluate  the value function of $\pi^k$ in each iteration,  Theorem \ref{theorem: upper bound of VI} does not really mean the algorithm terminates in a finite number of iterations.
\end{remark}

\section{Linear convergence and equivalence to PI}
\label{Linear convergence and the equivalence to PI}

\subsection{Linear convergence of PPG under non-adaptive increasing step sizes}
In Theorem~\ref{theorem:sublinear of PPG}, we have established the sublinear convergence of PPG for constant step sizes.  In this section, we further show that  with increasing step sizes $\eta_k \ge \calO\left( \frac{1}{\gamma^{2k}} \right)$, the classical $\gamma$-rate linear convergence of PPG can be achieved globally.
Note that this result can indeed be obtained based on a similar argument for PQA in 
\cite{Johnson_Pike-Burke_Rebeschini_2023}. Here,  for the sake of
self-completeness, we present a different proof based on Lemma~\ref{lemma: advantage lower bound for support actions} instead of the three point descent lemma  used in \cite{Johnson_Pike-Burke_Rebeschini_2023}.

\begin{theorem}Consider the prototype update in \eqref{policy update of PPG and PQD}. Suppose the step size in the $k$-th iteration satisfies 
\begin{align}
\eta^k_s \ge \frac{1}{\gamma ^{2k+1}c_0}\cdot 2 \pi _{s}^{k}\left( \mathcal{A} \setminus \mathcal{A} _{s}^{k} \right), \quad \forall s\in \mathcal{S}, 
\label{condition_of_linear_convergece_by_control_error}
\end{align}
for a given constant $c_0 > 0$. Then the value errors satisfy
$$
\left\| V^*-V^k \right\| _{\infty}<\gamma ^k\left( \left\| V^*-V^0 \right\| _{\infty}+\frac{c_0}{1-\gamma} \right).
$$
\label{thm:ketmp02}
\end{theorem}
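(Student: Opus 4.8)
The plan is to reduce the claim to a one-step recursion for the error $e_k := \|V^* - V^k\|_\infty$ of the form $e_{k+1} \le \gamma e_k + \gamma^{2k+1}c_0$, and then to solve it. The whole argument rests on the observation that the step-size condition \eqref{condition_of_linear_convergece_by_control_error} forces $\pi^{k+1}$ to be \emph{approximately greedy} with respect to $Q^k$, with an error that decays geometrically in $k$.

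First I would make this approximate greediness quantitative. With $f_s(\eta^k_s) = \sum_{a}\pi^{k+1}_{s,a}A^k_{s,a}$, Lemma~\ref{lemma: advantage lower bound for support actions} (applied with $\pi=\pi^k$, $\eta=\eta^k_s$) gives $A^k_{s,a} \ge \max_{\tilde a}A^k_{s,\tilde a} - \frac{2\pi^k_s(\mathcal{A}\setminus\mathcal{A}^k_s)}{\eta^k_s}$ for every $a$ in the support $\mathcal{B}_s(\eta^k_s)$ of $\pi^{k+1}_s$. Since \eqref{condition_of_linear_convergece_by_control_error} is exactly $\frac{2\pi^k_s(\mathcal{A}\setminus\mathcal{A}^k_s)}{\eta^k_s}\le \gamma^{2k+1}c_0$, averaging this bound against $\pi^{k+1}_s$ (which is supported on $\mathcal{B}_s(\eta^k_s)$ and sums to one) yields
\begin{align*}
f_s(\eta^k_s) \ge \max_a A^k_{s,a} - \gamma^{2k+1}c_0, \qquad \forall s\in\mathcal{S}.
\end{align*}
Introducing the Bellman optimality operator $(\mathcal{T}V)(s):=\max_a Q^V(s,a)$ and the policy operator $(\mathcal{T}^{\pi^{k+1}}V)(s):=\sum_a\pi^{k+1}_{s,a}Q^V(s,a)$, and using $Q^{V^k}=Q^k$ (valid since $V^k=V^{\pi^k}$) together with $\max_a Q^k_{s,a}=V^k(s)+\max_a A^k_{s,a}$, the last display reads $\mathcal{T}^{\pi^{k+1}}V^k \ge \mathcal{T}V^k - \gamma^{2k+1}c_0\mathbf{1}$ pointwise.

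Next I would pass from $\mathcal{T}^{\pi^{k+1}}V^k$ to the true value $V^{k+1}$. Because $f_s(\eta^k_s)\ge 0$ (Theorem~\ref{theorem:improvement lower bound for PPG}), we have $\mathcal{T}^{\pi^{k+1}}V^k\ge V^k$; and since $\mathcal{T}^{\pi^{k+1}}$ is a monotone $\gamma$-contraction whose fixed point is $V^{k+1}$, the standard monotone policy-improvement argument (iterating $\mathcal{T}^{\pi^{k+1}}$ from $V^k$ yields an increasing sequence with limit $V^{k+1}$) gives $V^{k+1}\ge \mathcal{T}^{\pi^{k+1}}V^k$. Combining this with the operator inequality above and then with the $\gamma$-contraction of $\mathcal{T}$ and the Bellman optimality identity $\mathcal{T}V^*=V^*$, for every $s$,
\begin{align*}
0 \le V^*(s)-V^{k+1}(s) \le V^*(s)-(\mathcal{T}V^k)(s)+\gamma^{2k+1}c_0 = (\mathcal{T}V^*)(s)-(\mathcal{T}V^k)(s)+\gamma^{2k+1}c_0 \le \gamma e_k + \gamma^{2k+1}c_0,
\end{align*}
which after taking the supremum over $s$ is precisely the recursion $e_{k+1}\le \gamma e_k + \gamma^{2k+1}c_0$.

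Finally I would solve the recursion. Unrolling it gives
\begin{align*}
e_k \le \gamma^k e_0 + c_0\sum_{m=0}^{k-1}\gamma^{\,k-1-m}\gamma^{\,2m+1} = \gamma^k e_0 + \gamma^k c_0\frac{1-\gamma^k}{1-\gamma} < \gamma^k\Big(e_0 + \frac{c_0}{1-\gamma}\Big),
\end{align*}
which is the stated bound; the strictness follows from $c_0>0$ and $\frac{1-\gamma^k}{1-\gamma}<\frac{1}{1-\gamma}$, and the same conclusion also follows by induction using the sharpened hypothesis $e_k\le \gamma^k e_0 + \gamma^k c_0\frac{1-\gamma^k}{1-\gamma}$. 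I expect the main obstacle to be the middle step: correctly translating the support-level advantage bound of Lemma~\ref{lemma: advantage lower bound for support actions} into the pointwise operator inequality $\mathcal{T}^{\pi^{k+1}}V^k\ge\mathcal{T}V^k-\gamma^{2k+1}c_0\mathbf{1}$, and cleanly justifying the monotone improvement $V^{k+1}\ge\mathcal{T}^{\pi^{k+1}}V^k$; the contraction and recursion steps are then routine.
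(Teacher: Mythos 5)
Your proof is correct and takes essentially the same route as the paper: both use Lemma~\ref{lemma: advantage lower bound for support actions} together with the step-size condition to show $\pi^{k+1}$ is approximately greedy with respect to $Q^k$ (up to error $\gamma^{2k+1}c_0$), combine this with monotone policy improvement and the $\gamma$-contraction of the Bellman optimality operator to get the recursion $e_{k+1}\le\gamma e_k+\gamma^{2k+1}c_0$, and unroll it to the identical final bound. The only difference is presentational: you cast the argument in Bellman-operator notation and justify the improvement step $V^{k+1}\ge\mathcal{T}^{\pi^{k+1}}V^k$ via monotone iteration, a point the paper invokes tersely (and with a sign typo: its remark that $Q^{k+1}_{s,a}\le Q^{k}_{s,a}$ ``due to the improvement'' should read $Q^{k+1}_{s,a}\ge Q^{k}_{s,a}$).
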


\begin{proof}

For simplicity of notation, let $
\tilde{\eta}_{s}^{k}:=\frac{2\pi _{s}^{k}\left( \mathcal{A} \setminus \mathcal{A} _{s}^{k} \right)}{\eta _{s}^{k}}$. According to Lemma \ref{lemma: advantage lower bound for support actions}, for any $k > 0$ and $s \in \calS$,
$$
\sum_{a\in \mathcal{A}}{\pi _{s,a}^{k+1}Q_{s,a}^{k}}\ge \sum_{a\in \mathcal{A}}{\pi _{s,a}^{k+1}\left( \underset{\tilde{a}\in \mathcal{A}}{\max}\,Q_{s,\tilde{a}}^{k}-\tilde{\eta}_{s}^{k} \right)}=\underset{\tilde{a}\in \mathcal{A}}{\max}\,Q_{s,\tilde{a}}^{k}-\tilde{\eta}_{s}^{k}.
$$
Then
\begin{align*}
&\phantom{==}V^*\left( s \right) -V^{k+1}\left( s \right) =V^*\left( s \right) -\mathbb{E} _{a\sim \pi _{s}^{k+1}}\left[ Q^{k+1}_{s,a} \right] 
\le V^*\left( s \right) -\mathbb{E} _{a\sim \pi _{s}^{k+1}}\left[ Q^k_{s,a} \right] 
\\
\,\,                    &\le V^*\left( s \right) -\left( \underset{\tilde{a}\in \mathcal{A}}{\max}\,Q_{s,\tilde{a}}^{k}-\tilde{\eta}^k_s \right) 
=\underset{a\in \mathcal{A}}{\max}\,Q^*_{s,a} -\underset{\tilde{a}\in \mathcal{A}}{\max}\,Q_{s,\tilde{a}}^{k} + \tilde{\eta}^k_s
\le \gamma \left\| V^*-V^k \right\| _{\infty}+ \tilde{\eta}^k_s,
\end{align*}
where in the first inequality we have used the fact $Q^{k+1}_{s,a}\leq Q^k_{s,a}$ due to the improvement. 
It follows that
\begin{align*}
\left\| V^*-V^k \right\| _{\infty}&\le \gamma \left\| V^*-V^{k-1} \right\| _{\infty}+\underset{s\in \mathcal{S}}{\max}\,\tilde{\eta}_{s}^{k}
\\
&\,\,\le \gamma ^2\left\| V^*-V^{k-2} \right\| _{\infty}+\gamma \underset{s\in \mathcal{S}}{\max}\,\tilde{\eta}_{s}^{k-1}+\underset{s\in \mathcal{S}}{\max}\,\tilde{\eta}_{s}^{k}\leq ...
\\
&\,\,\le 
\gamma ^k\left\| V^*-V^0 \right\| _{\infty}+\sum_{i=0}^{k-1}{\left( \underset{s\in \mathcal{S}}{\max}\,\tilde{\eta}_{s}^{i} \right) \gamma ^{k-1-i}}.
\numberthis
\label{eqC}
\end{align*}
Notice that the condition \eqref{condition_of_linear_convergece_by_control_error} is equivalent to 
$
\underset{s\in \mathcal{S}}{\max}\,\tilde{\eta}_{s}^{i}\le c_0\gamma ^{2i+1}.
$
Plugging it into \eqref{eqC} yields
$$
\left\| V^*-V^k \right\| _{\infty}\le \gamma ^k\left\| V^*-V^0 \right\| _{\infty}+c_0\sum_{i=0}^{k-1}{\gamma ^{2i+1}\gamma ^{k-1-i}}<\gamma ^k\left( \left\| V^*-V^0 \right\| _{\infty}+\frac{c_0}{1-\gamma} \right),
$$
which completes the proof.
\end{proof}

The $\gamma$-rate linear convergence of PPG follows immediately by noting that $\eta_s^k=\eta_k\frac{d_\mu^k(s)}{1-\gamma}$ in PPG and $d_\mu^k(s)\geq (1-\gamma)\tilde{\mu}$.
\begin{proposition} For PPG, if
$
\eta _k\ge \frac{1}{\tilde{\mu}}\frac{1}{c_0}\frac{2}{\gamma ^{2k+1}},
$
then 
\begin{align}
\left\| V^*-V^k \right\| _{\infty}<\gamma ^k\left( \left\| V^*-V^0 \right\| _{\infty}+\frac{c_0}{1-\gamma} \right).
\label{non-adaptive PPG recover gamma-rate}
\end{align}
\label{proposition: non-adaptive PPG recover gamma-rate}
\end{proposition}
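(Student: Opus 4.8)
The plan is to verify that the scalar step-size condition stated in the proposition implies the state-wise hypothesis \eqref{condition_of_linear_convergece_by_control_error} of Theorem~\ref{thm:ketmp02}, after which the bound \eqref{non-adaptive PPG recover gamma-rate} follows immediately by applying that theorem. In other words, the proposition is just the specialization of Theorem~\ref{thm:ketmp02} to PPG, and the only work is to translate the abstract condition on $\eta_s^k$ into a condition on the single PPG step size $\eta_k$.

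First I would recall that for PPG the effective per-state step size is $\eta_s^k = \eta_k \frac{d_\mu^k(s)}{1-\gamma}$, as recorded right after the prototype update \eqref{policy update of PPG and PQD}. Combining this with the visitation lower bound \eqref{bound of d}, namely $d_\mu^k(s) \geq (1-\gamma)\tilde{\mu}$, yields the uniform lower bound $\eta_s^k \geq \eta_k \tilde{\mu}$ for every $s \in \mathcal{S}$. Next I would invoke the assumption $\eta_k \geq \frac{1}{\tilde{\mu}}\frac{1}{c_0}\frac{2}{\gamma^{2k+1}}$ together with the trivial fact that $\pi_s^k(\mathcal{A}\setminus\mathcal{A}_s^k) \leq 1$, since it is a probability. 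These two bounds give, for every $s \in \mathcal{S}$,
$$
\eta_s^k \geq \eta_k \tilde{\mu} \geq \frac{2}{\gamma^{2k+1} c_0} \geq \frac{2\,\pi_s^k(\mathcal{A}\setminus\mathcal{A}_s^k)}{\gamma^{2k+1} c_0},
$$
which is precisely the hypothesis \eqref{condition_of_linear_convergece_by_control_error}. Applying Theorem~\ref{thm:ketmp02} then yields the desired $\gamma$-rate estimate and completes the argument.

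There is essentially no substantive obstacle here, as the statement is a direct corollary of Theorem~\ref{thm:ketmp02}; the only point requiring a little care is that the hypothesis \eqref{condition_of_linear_convergece_by_control_error} is genuinely state-dependent through the factor $\pi_s^k(\mathcal{A}\setminus\mathcal{A}_s^k)$, whereas the proposition imposes a single state-independent condition on $\eta_k$. This gap is closed cleanly by using the two uniform bounds on opposite sides of the inequality, namely the visitation lower bound $d_\mu^k(s) \geq (1-\gamma)\tilde{\mu}$ on the left and $\pi_s^k(\mathcal{A}\setminus\mathcal{A}_s^k) \leq 1$ on the right, so that one scalar threshold on $\eta_k$ simultaneously enforces the per-state requirement for all $s \in \mathcal{S}$.
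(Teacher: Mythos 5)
Your proposal is correct and follows exactly the paper's own (one-line) argument: specialize Theorem~\ref{thm:ketmp02} by noting $\eta_s^k = \eta_k d_\mu^k(s)/(1-\gamma) \ge \eta_k\tilde{\mu}$ from \eqref{bound of d}, so the assumed threshold on $\eta_k$ together with $\pi_s^k(\mathcal{A}\setminus\mathcal{A}_s^k)\le 1$ yields the state-wise condition \eqref{condition_of_linear_convergece_by_control_error}. The only difference is that you write out explicitly the bound $\pi_s^k(\mathcal{A}\setminus\mathcal{A}_s^k)\le 1$, which the paper leaves implicit.
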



\begin{remark} Recalling from Lemma~\ref{corollary: condition of finite time of PPG and PQD w.r.t value error} that when the value error satisfies
\begin{align}
\left\| V^*-V^k \right\| _{\infty}\le \frac{\Delta}{2}\frac{\eta _{s}^{k}\Delta}{1+\eta _{s}^{k}\Delta},
\label{convergence threhold of protopy update}
\end{align}
the prototype update in  \eqref{policy update of PPG and PQD} outputs an optimal policy. Using the step sizes in Proposition \ref{proposition: non-adaptive PPG recover gamma-rate} for PPG, it is easy to see that the RHS of \eqref{convergence threhold of protopy update} satisfies
\begin{align}
\frac{\Delta}{2}\frac{\eta _{s}^{k}\Delta}{1+\eta _{s}^{k}\Delta}= \frac{\Delta}{2}\left(1- \frac{1}{1+\eta^k _s\Delta} \right) \ge \frac{\Delta}{2} \left( 1 - \frac{1}{1+\eta_k\tilde{\mu}\Delta} \right) 
\ge \frac{\Delta}{2}\left( \frac{2}{c_0+2} \right).
\label{RHS lb}
\end{align}
Combining \eqref{non-adaptive PPG recover gamma-rate}, \eqref{convergence threhold of protopy update} and \eqref{RHS lb} together implies that,  after at most
$$
k_0:=\left\lceil\frac{1}{1-\gamma}\log \left( \frac{(c_0+1)(c_0+2)}{(1-\gamma)\Delta} \right) \right\rceil
$$
 iterations, PPG with the non-adaptive increasing step sizes achieves exact convergence.
\end{remark}

\subsection{Equivalence of PPG  to PI under adaptive step sizes}
\label{Finite time Convergence of PPG and PQD with adaptive step size}
As already mentioned, 
it is easy to see PPG should converge to a PI update when $\eta_s\rightarrow\infty$.
In this section, we study the convergence  of PPG with adaptive step sizes and identify the non-asymptotic step size threshold beyond which  PPG is equivalent to PI. The analysis of this section is similar to that for the finite iteration convergence. We utilize the gap property (Lemma \ref{proj_property_2}) again to show that once the step size is large enough, then the action set $\mathcal{A} \setminus \mathcal{A}^k_s$ will be eliminated from the support set of the new policy. 

\begin{theorem} Consider the prototype update in \eqref{policy update of PPG and PQD} and suppose the step size $\eta$ satisfies
\begin{align}
\underset{s\in \mathcal{S}}{\min}\,\,\eta _s > \mathcal{F} ^{\pi}:=\frac{2}{\Delta ^{\pi}}\cdot \underset{s\in \mathcal{S}}{\max}\left\{ \pi _s\left( \mathcal{A} \setminus \mathcal{A} _{s}^{\pi} \right) \right\},
\label{condition of the equivalent to PI}
\end{align}
where $
\Delta ^{\pi}:=\underset{s\in \mathcal{S}}{\min}\left| \underset{a^{\prime}\in \mathcal{A}}{\max}A_{s,a^{\prime}}^{\pi}-\underset{a^{\prime}\notin \mathcal{A} _{s}^{\pi}}{\max}A_{s,a^{\prime}}^{\pi} \right|$. Then the new policy at state $s$ (i.e., ${\pi}_s^+$) is supported on the action set $\mathcal{A}^\pi_s$, which implies that the prototype update is equivalent to  PI.
\label{theorem: PPG and PQD are PI}
\end{theorem}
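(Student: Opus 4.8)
The plan is to show that when $\eta_s > \mathcal{F}^\pi$ holds for every $s$, no $\pi$-suboptimal action can survive in the support set $\calB_s(\eta_s)$ of the new policy, so that $\mathrm{supp}(\pi_s^+) \subseteq \calA_s^\pi$; since one PI step is precisely a greedy update supported on $\calA_s^\pi$, this establishes the claimed equivalence. I would argue by contradiction at a fixed state $s \in \calS$, using the advantage lower bound already proved for actions in the support.

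Suppose some $a' \in \calA \setminus \calA_s^\pi$ belongs to $\calB_s(\eta_s)$. By Lemma~\ref{lemma: advantage lower bound for support actions}, every surviving action obeys a lower bound on its advantage; applied to $a'$ this reads
$$A^\pi_{s,a'} \geq \max_{\tilde a \in \calA} A^\pi_{s,\tilde a} - \frac{2\pi_s(\calA \setminus \calA_s^\pi)}{\eta_s}.$$
On the other hand, since $a' \notin \calA_s^\pi$ we have $A^\pi_{s,a'} \leq \max_{a'' \notin \calA_s^\pi} A^\pi_{s,a''}$. Subtracting the two, the state-wise $\pi$-advantage gap $\max_{\tilde a} A^\pi_{s,\tilde a} - \max_{a'' \notin \calA_s^\pi} A^\pi_{s,a''}$ is bounded above by $\frac{2\pi_s(\calA \setminus \calA_s^\pi)}{\eta_s}$. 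As this gap is at least $\Delta^\pi$ by definition, rearranging gives
$$\eta_s \leq \frac{2\pi_s(\calA \setminus \calA_s^\pi)}{\Delta^\pi} \leq \frac{2}{\Delta^\pi}\max_{s' \in \calS}\pi_{s'}(\calA \setminus \calA_{s'}^\pi) = \mathcal{F}^\pi,$$
contradicting the hypothesis $\eta_s > \mathcal{F}^\pi$. Hence $\pi_{s,a'}^+ = 0$ for every $a' \notin \calA_s^\pi$.

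Combined with Lemma~\ref{three cases of support set}, which guarantees that $\calB_s(\eta_s)$ always contains at least one $\pi$-optimal action, this yields $\mathrm{supp}(\pi_s^+) \subseteq \calA_s^\pi$ for all $s$, so $\pi_s^+$ is a greedy policy with respect to $A^\pi(s,\cdot)$, i.e. the output of one PI step. An equivalent route that avoids the contradiction is to apply the gap property (Lemma~\ref{proj_property_2}) directly with $\mathcal{B} = \calA_s^\pi$ and $\mathcal{C} = \calA \setminus \calA_s^\pi$ to the vector $p = \pi_s + \eta_s A^\pi(s,\cdot)$: using $(x)_+ \geq x$ together with the separation $A^\pi_{s,a} - A^\pi_{s,a'} \geq \Delta^\pi$ for $a \in \calA_s^\pi$ and $a' \notin \calA_s^\pi$, one checks that $\sum_{a \in \calA_s^\pi}\big(p_a - \max_{a' \notin \calA_s^\pi} p_{a'}\big)_+ \geq 1$, and the conclusion follows. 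I expect the only real subtlety to be the bookkeeping of the $\max$ and $\min$ over states, so that the state-wise gap is correctly lower-bounded by the global constant $\Delta^\pi$ and the state-wise suboptimal mass is upper-bounded by $\max_{s'}\pi_{s'}(\calA \setminus \calA_{s'}^\pi)$; everything else reduces to the gap lemmas already established.
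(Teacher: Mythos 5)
Your proposal is correct and takes essentially the same route as the paper: the paper's proof applies Lemma~\ref{lemma: advantage lower bound for support actions} in contrapositive form to conclude that every $a' \notin \calA_s^\pi$ is excluded from $\calB_s(\eta_s)$ once $\eta_s > \mathcal{F}^\pi$, which is exactly your contradiction argument rearranged, with the same bookkeeping $\max_{\tilde a} A^\pi_{s,\tilde a} - \max_{a''\notin\calA_s^\pi}A^\pi_{s,a''} \geq \Delta^\pi$ and $\pi_s(\calA\setminus\calA_s^\pi) \leq \max_{s'}\pi_{s'}(\calA\setminus\calA_{s'}^\pi)$. Your alternative route via Lemma~\ref{proj_property_2} is also sound (the factor $2$ in $\mathcal{F}^\pi$ is precisely what makes the cumulative gap exceed $1$), but it essentially re-derives Lemma~\ref{lemma: advantage lower bound for support actions}, so it is not a genuinely different argument.
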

\begin{proof} Notice that for each state $s \in \calS$ and $a \notin \mathcal{A}^\pi_s$, $
A_{s,a}^{\pi}\le \underset{a^{\prime}\notin \mathcal{A} _{s}^{\pi}}{\max}A_{s,a^{\prime}}^{\pi}$. By Lemma \ref{lemma: advantage lower bound for support actions}, when the step size satisfies
$$
\frac{2\pi _s\left( \mathcal{A} \setminus \mathcal{A} _{s}^{\pi} \right)}{\eta _s}<\underset{a^{\prime}\in \mathcal{A}}{\max}A_{s,a^{\prime}}^{\pi}-\underset{a^{\prime}\notin \mathcal{A} _{s}^{\pi}}{\max}A_{s,a^{\prime}}^{\pi},
$$
or equivalently
\begin{align}
\eta _s<\frac{2\pi _s\left( \mathcal{A} \setminus \mathcal{A} _{s}^{\pi} \right)}{\underset{a^{\prime}\in \mathcal{A}}{\max}A_{s,a^{\prime}}^{\pi}-\underset{a^{\prime}\notin \mathcal{A} _{s}^{\pi}}{\max}A_{s,a^{\prime}}^{\pi}},
\label{PI_condition_for_each_s}
\end{align}
all the $ a^\prime \not\in\calA^\pi_s$ are not in $\calB_s(\eta_s)$. That is, the new policy $\pi^+_s$ is supported on $\mathcal{A}^\pi_s$. It's trivial to see that the condition \eqref{condition of the equivalent to PI} implies \eqref{PI_condition_for_each_s} for every $s \in \mathcal{S}$, thus the proof is completed.
\end{proof}

The equivalence of PPG to PI follows immediately from this theorem, which is similarly applicable for PQA.

\begin{corollary}If the step size satisfies $
\eta_k \ge \frac{1}{\tilde{\mu}}\mathcal{F} ^{\pi^k},$
PPG is equivalent to PI.
\label{corollary: PPG is PI}
\end{corollary}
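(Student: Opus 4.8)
The plan is to deduce this corollary directly from Theorem~\ref{theorem: PPG and PQD are PI} by controlling the effective state-wise step size of PPG from below. Recall that PPG is exactly the prototype update \eqref{policy update of PPG and PQD} instantiated with $\eta_s^k = \frac{\eta_k d_\mu^k(s)}{1-\gamma}$, whereas Theorem~\ref{theorem: PPG and PQD are PI} is phrased in terms of $\min_{s}\eta_s$. So the entire task is to translate the scalar hypothesis $\eta_k \ge \frac{1}{\tilde\mu}\mathcal{F}^{\pi^k}$ into the state-wise hypothesis of the theorem.

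First I would invoke the visitation lower bound \eqref{bound of d}, namely $d_\mu^k(s)\ge(1-\gamma)\tilde\mu$ for every $s\in\mathcal{S}$, to obtain $\eta_s^k=\frac{\eta_k d_\mu^k(s)}{1-\gamma}\ge\eta_k\tilde\mu$ uniformly in $s$, and hence $\min_{s\in\mathcal{S}}\eta_s^k\ge\eta_k\tilde\mu$. Under the assumption $\eta_k\ge\frac{1}{\tilde\mu}\mathcal{F}^{\pi^k}$ this gives $\eta_k\tilde\mu\ge\mathcal{F}^{\pi^k}$, so that $\min_{s\in\mathcal{S}}\eta_s^k\ge\mathcal{F}^{\pi^k}$. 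Applying Theorem~\ref{theorem: PPG and PQD are PI} with $\pi=\pi^k$ then shows that for each state the new policy $\pi_s^{k+1}$ is supported on $\mathcal{A}_s^k=\arg\max_a A^k(s,a)=\arg\max_a Q^k(s,a)$, which is precisely the set on which a policy-iteration update at $\pi^k$ places its mass. Therefore the PPG step coincides with a PI step. The remark that the same conclusion holds for PQA is immediate: there $\eta_s^k=\eta_k$ for every $s$, so $\min_s\eta_s^k=\eta_k$ and the condition $\eta_k\ge\mathcal{F}^{\pi^k}$ (without the $1/\tilde\mu$ factor) suffices.

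The only delicate point, and the one I expect to require care rather than any real difficulty, is the strict-versus-nonstrict inequality: Theorem~\ref{theorem: PPG and PQD are PI} is stated with $\min_s\eta_s>\mathcal{F}^\pi$, while the chain above only yields $\min_s\eta_s^k\ge\mathcal{F}^{\pi^k}$. I would close this gap by revisiting the exclusion argument at the boundary. For each $s$ and each $a'\notin\mathcal{A}_s^k$ the relevant advantage gap $\max_a A^k_{s,a}-\max_{a'\notin\mathcal{A}_s^k}A^k_{s,a'}$ is at least $\Delta^{\pi^k}$, so $\tfrac{2\pi_s^k(\mathcal{A}\setminus\mathcal{A}_s^k)}{\max_a A^k_{s,a}-\max_{a'\notin\mathcal{A}_s^k}A^k_{s,a'}}\le\mathcal{F}^{\pi^k}\le\eta_s^k$; one then checks directly through the gap property (Lemma~\ref{proj_property_2}), whose characterization of a zero coordinate is the \emph{weak} inequality ``$\ge 1$'', that $a'\notin\calB_s(\eta_s^k)$ even in the equality case, rather than relying on the strict-flavored contrapositive of Lemma~\ref{lemma: advantage lower bound for support actions}. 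Alternatively, one may simply observe that $d_\mu^k(s)>(1-\gamma)\tilde\mu$ whenever state $s$ is revisited with positive probability under $\pi^k$, which already upgrades the bound to a strict one and lets the theorem apply verbatim.
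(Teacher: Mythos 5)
Your proposal is correct and follows exactly the route the paper intends: the paper offers no explicit proof of Corollary~\ref{corollary: PPG is PI} beyond the remark that it ``follows immediately'' from Theorem~\ref{theorem: PPG and PQD are PI}, and the intended argument is precisely your chain $\eta_s^k=\frac{\eta_k d_\mu^k(s)}{1-\gamma}\ge\eta_k\tilde\mu\ge\mathcal{F}^{\pi^k}$ via \eqref{bound of d}. Your additional care about the strict-versus-nonstrict mismatch is warranted: the corollary's hypothesis only delivers $\min_s\eta_s^k\ge\mathcal{F}^{\pi^k}$, while the theorem is stated with a strict inequality, a boundary case the paper silently glosses over (it similarly writes ``$>\eta\tilde\mu$'' where \eqref{bound of d} only gives ``$\ge$''). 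Your first repair is the right one and does work: with $\eta_s^k\Delta^{\pi^k}\ge 2\pi_s^k(\mathcal{A}\setminus\mathcal{A}_s^k)$ one gets, for every $a\in\mathcal{A}_s^k$ and the maximizing $a'\notin\mathcal{A}_s^k$, that $\pi_{s,a}^k+\eta_s^kA_{s,a}^k-(\pi_{s,a'}^k+\eta_s^kA_{s,a'}^k)\ge \pi_{s,a}^k-\pi_{s,a'}^k+2\pi_s^k(\mathcal{A}\setminus\mathcal{A}_s^k)\ge 0$, and summing over $\mathcal{A}_s^k$ gives at least $\pi_s^k(\mathcal{A}_s^k)+|\mathcal{A}_s^k|\,\pi_s^k(\mathcal{A}\setminus\mathcal{A}_s^k)\ge 1$, so the weak inequality in Lemma~\ref{proj_property_2} suffices to exclude all non-$\pi^k$-optimal actions even at equality. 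Be aware, though, that your second ``alternative'' fix is not valid in general: a state that is never revisited and attains $\mu(s)=\tilde\mu$ has $d_\mu^k(s)=(1-\gamma)\tilde\mu$ exactly, so strictness of the visitation bound cannot be assumed; rely on the gap-property argument instead.
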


\begin{corollary}If the step size satisfies $\eta_k \ge \mathcal{F} ^{\pi^k},$
 PQA is equivalent to PI.
\end{corollary}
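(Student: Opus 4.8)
The plan is to derive this corollary as an immediate specialization of Theorem~\ref{theorem: PPG and PQD are PI}. Recall that PQA is exactly the instance of the prototype update \eqref{policy update of PPG and PQD} in which the effective step size is state-independent, namely $\eta_s=\eta_k$ for every $s\in\mathcal{S}$ (in contrast to $\eta_s=\frac{\eta_k d^{k}_\mu(s)}{1-\gamma}$ for PPG). Consequently $\min_{s\in\mathcal{S}}\eta_s=\eta_k$, so the step-size hypothesis $\min_{s\in\mathcal{S}}\eta_s>\mathcal{F}^{\pi^k}$ demanded by Theorem~\ref{theorem: PPG and PQD are PI} collapses to the single scalar condition $\eta_k>\mathcal{F}^{\pi^k}$.

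First I would instantiate Theorem~\ref{theorem: PPG and PQD are PI} at $\pi=\pi^k$. Whenever $\eta_k>\mathcal{F}^{\pi^k}$, the theorem guarantees that the updated policy $\pi^{k+1}_s$ is supported on $\mathcal{A}^{\pi^k}_s=\mathcal{A}^k_s$ at every state $s$. Since $\mathcal{A}^k_s=\arg\max_{a}A^{k}(s,a)=\arg\max_a Q^{k}(s,a)$ is precisely the set of greedy actions with respect to $Q^{\pi^k}$, a policy supported on $\mathcal{A}^k_s$ at each state is a greedy (hence PI) update. This is exactly the asserted equivalence, and for the strict-inequality range the argument is a verbatim transcription of the proof of Theorem~\ref{theorem: PPG and PQD are PI} with $\eta_s$ replaced by the constant $\eta_k$.

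The only point requiring care, and the step I expect to be the main obstacle, is the discrepancy between the non-strict hypothesis $\eta_k\ge\mathcal{F}^{\pi^k}$ in the statement and the strict hypothesis $\eta_k>\mathcal{F}^{\pi^k}$ of Theorem~\ref{theorem: PPG and PQD are PI}; thus I must separately dispose of the boundary case $\eta_k=\mathcal{F}^{\pi^k}$. To do so I would exploit the fact that the proof of Lemma~\ref{lemma: advantage lower bound for support actions} actually yields the \emph{strict} bound $\eta_s\big(\max_{a}A^k_{s,a}-A^k_{s,a'}\big)<2\pi^k_s(\mathcal{A}\setminus\mathcal{A}^k_s)$ for any $a'\in\mathcal{B}_s(\eta_s)$ (the strictness coming from the condition $\sum_{a}(\,\cdot\,)_+<1$ characterizing membership in the support). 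Suppose, for contradiction, that some non-optimal $a'\notin\mathcal{A}^k_s$ lay in $\mathcal{B}_s(\eta_k)$ when $\eta_k=\mathcal{F}^{\pi^k}$. Combining the strict bound with $\max_{a}A^k_{s,a}-A^k_{s,a'}\ge\Delta^{\pi^k}$ gives $\eta_k\Delta^{\pi^k}<2\pi^k_s(\mathcal{A}\setminus\mathcal{A}^k_s)\le 2\max_{s}\pi^k_s(\mathcal{A}\setminus\mathcal{A}^k_s)=\mathcal{F}^{\pi^k}\Delta^{\pi^k}$, i.e. $\eta_k<\mathcal{F}^{\pi^k}$, contradicting $\eta_k=\mathcal{F}^{\pi^k}$. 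Hence every non-optimal action is excluded from $\mathcal{B}_s(\eta_k)$ even at equality, so the non-strict hypothesis $\eta_k\ge\mathcal{F}^{\pi^k}$ is precisely what is needed and the corollary follows.
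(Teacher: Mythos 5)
Your proposal is correct and follows the paper's own route: the paper states this corollary without a separate proof, treating it exactly as you do — as an immediate specialization of Theorem~\ref{theorem: PPG and PQD are PI}, obtained by noting that PQA is the prototype update \eqref{policy update of PPG and PQD} with state-independent effective step size $\eta_s=\eta_k$, so that $\min_{s\in\mathcal{S}}\eta_s=\eta_k$ and the theorem's hypothesis collapses to a scalar condition, with the support being contained in the greedy set $\mathcal{A}^k_s=\arg\max_a Q^k(s,a)$.

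The one place where you go beyond the paper is the boundary case $\eta_k=\mathcal{F}^{\pi^k}$, and your treatment of it is both valid and a genuine improvement in rigor. The paper's theorem assumes the strict inequality $\min_s\eta_s>\mathcal{F}^{\pi}$, yet both corollaries are stated with $\ge$; the paper silently ignores this mismatch (for the PPG corollary the same issue arises, since $\eta_s^k\geq\eta_k\tilde{\mu}\geq\mathcal{F}^{\pi^k}$ is again only non-strict). Your fix works because membership in the support $\mathcal{B}_s(\eta_s)$ is characterized via Lemma~\ref{proj_property_2} by the strict inequality $\sum_a(\cdot)_+<1$, so the proof of Lemma~\ref{lemma: advantage lower bound for support actions} indeed yields the strict bound $\eta_s\bigl(\max_a A^k_{s,a}-A^k_{s,a'}\bigr)<2\pi^k_s(\mathcal{A}\setminus\mathcal{A}^k_s)$ for every non-optimal $a'\in\mathcal{B}_s(\eta_s)$, and your chain $\eta_k\Delta^{\pi^k}<2\pi^k_s(\mathcal{A}\setminus\mathcal{A}^k_s)\le\mathcal{F}^{\pi^k}\Delta^{\pi^k}$ then excludes every non-optimal action under the non-strict hypothesis $\eta_k\ge\mathcal{F}^{\pi^k}$, not merely under $\eta_k>\mathcal{F}^{\pi^k}$. (One caveat: the strict-bound statement should be restricted to non-optimal $a'\in\mathcal{B}_s(\eta_s)\setminus\mathcal{A}^k_s$, which is how you in fact use it. Also note that the paper's display \eqref{PI_condition_for_each_s} contains an inequality-direction typo — it should read $\eta_s>$ rather than $\eta_s<$ — which your reading correctly ignores.)
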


\begin{remark}
It is worth noting that the step size threshold in the above two corollaries  only relies on the current policy $\pi^k$.
\end{remark}

\bibliographystyle{plain}
\bibliography{refs}

\end{document}